\documentclass{article}

\usepackage{amsmath, amsthm, amssymb, graphicx, microtype, hyperref, tikz-cd, tikz, subcaption}
\usepackage{comment}
\usepackage{geometry}
\geometry{margin=1in}
\usepackage{braket}
\usepackage{algorithm}
\usepackage{algpseudocode}
\usepackage{hyperref}
\usepackage{soul}
\usepackage{mathtools}
\usepackage{bm}
\usepackage{enumerate} 
\usepackage{ninecolors}

\tikzcdset{scale cd/.style={every label/.append style={scale=#1},
    cells={nodes={scale=#1}}}}

\newcommand{\R}{\mathbb{R}}

\newcommand{\N}{\mathbb{N}}
\newcommand{\e}{\varepsilon}

\newcommand{\step}{\operatorname{maxstep}}

\newcommand{\intr}{\operatorname{int}}

\newtheorem{defn}{Definition}
\newtheorem{thm}{Theorem}
\newtheorem{lem}{Lemma}
\newtheorem{prop}{Proposition}

\newtheorem{cor}{Corollary}

\newtheorem{prob}{Problem}
\newtheorem*{prob*}{Problem}
\newtheorem{claim}{Claim}
\newtheorem{algo}{Algorithm}

\def\ve#1{\mathchoice{\mbox{\boldmath$\displaystyle\bf#1$}}
{\mbox{\boldmath$\textstyle\bf#1$}}
{\mbox{\boldmath$\scriptstyle\bf#1$}}
{\mbox{\boldmath$\scriptscriptstyle\bf#1$}}}

\newcommand\veb{{\ve b}}
\newcommand\vecc{{\ve c}}
\newcommand\ved{{\ve d}}
\newcommand\vece{{\ve e}}

\newcommand\veg{{\ve g}}
\newcommand\veh{{\ve h}}

\newcommand\veq{{\ve q}}
\newcommand\ver{{\ve r}}

\newcommand\veu{{\ve u}}
\newcommand\vev{{\ve v}}
\newcommand\vew{{\ve w}}
\newcommand\vex{{\ve x}}
\newcommand\vey{{\ve y}}
\newcommand\vez{{\ve z}}
\newcommand\veo{{\ve 0}}
\newcommand\T{\top}
\newcommand\omo{\omega_0}
\newcommand\ome{\omega_\e}

\DeclareMathOperator{\rank}{rank}

\makeatletter
\renewcommand*{\top}{%
  {\mathpalette\@transpose{}}%
}
\newcommand*{\@transpose}[2]{%
  \scriptsize
 \raisebox{1pt}{$\m@th#1\mathsf{T}$}%
}
\makeatother

\usepackage{authblk}

\title{On the Hardness of Short and Sign-Compatible Circuit Walks}
\date{ }

\author[1]{Steffen Borgwardt}
\author[1]{Weston Grewe}
\author[2]{Sean Kafer}
\author[3]{Jon Lee}
\author[4]{Laura Sanit\`a}

\affil[1]{\small Department of Mathematical and Statistical Sciences, University of Colorado Denver}
\affil[2]{\small School of Mathematics, Georgia Institute of Technology}
\affil[3]{\small IOE Department, University of Michigan}
\affil[4]{\small Department of Computing Sciences, Bocconi University of Milan}
\begin{document}

\maketitle

\begin{abstract}

The circuits of a polyhedron are a superset of its edge directions. Circuit walks, a sequence of steps along circuits, generalize edge walks and are ``short'' if they have few steps or small total length. Both interpretations of short are relevant to the theory and application of linear programming.

We study the hardness of several problems relating to the construction of short circuit walks. We establish that for a pair of vertices of a $0/1$-network-flow polytope, it is NP-complete to determine the length of a shortest circuit walk, even if we add the requirement that the walk must be sign-compatible. Our results also imply that determining the minimal number of circuits needed for a sign-compatible decomposition is NP-complete. Further, we show that it is NP-complete to determine the smallest total length (for $p$-norms $\lVert \cdot \rVert_p$, $1 < p \leq \infty$) of a circuit walk between a pair of vertices. One method to construct a short circuit walk is to pick up a correct facet at each step, which generalizes a non-revisiting walk. We prove that it is NP-complete to determine if there is a circuit direction that picks up a correct facet; in contrast, this problem can be solved in polynomial time for TU polyhedra.

\end{abstract}

\noindent{\bf Keywords:} circuits, circuit walks, sign-compatibility, distance, complexity\\
{\bf MSC2020:}  52B05, 68Q25, 90C60

\section{Introduction}

Circuits and circuit walks generalize edges and edge walks. The \emph{circuits}, or elementary vectors \cite{g-75,r-69}, of a polyhedron are the vectors corresponding to minimal dependence relations of an underlying constraint matrix; they include all edge directions. The corresponding \emph{circuit walks} \cite{bfh-14} follow these directions, and so may pass through the interior of the polyhedron. We are interested in the complexity of constructing ``short'' circuit walks between the vertices of a polyhedron, which we interpret in two ways: a circuit walk with few steps or a circuit walk with small total length with respect to a norm. We call the total length with respect to a norm the \emph{geometric distance}. One method to construct a short circuit walk is to construct a circuit walk that enters a facet incident to the target at each step; such a walk is a generalization of a \emph{non-revisiting} edge walk. Both interpretations of short are closely related to sign-compatibility between the circuits.

In many applications, a gradual transition to a new transportation plan, schedule, database structure, or software solution is needed. Ideally, such a transition would be a short sequence of simple steps that satisfy the various constraints associated with the problem after each step---which makes the use of circuits a natural choice. We encounter such applications, for example, in the transformation of clusterings through sequences of item movements that would retain favorable properties \cite{bhz-22, bv-19a}.

First, we provide some background and notation. Then, we outline our contributions in Section \ref{sec:contributions}. Throughout, we use bold font to refer to vectors. For a vector $\vex \in \R^n$ and $1 \leq i \leq n$, $\vex(i)$ denotes the $i^{\rm th}$ coordinate of $\vex$. 
For a matrix $A \in \R^{m\times n}$ or a vector $\veb\in\R^m$, given a subset $I \subseteq [m]\coloneqq\{ 1, \ldots, m \}$, $A_I$ is the row-submatrix of $A$ whose rows are indexed by $I$ and $\veb(I)$ is the restriction of $\veb$ to the entries indexed by $I$.  Given an index $i\in[m]$, we let $A_i$ denote the (column) vector given by the $i^{\text{th}}$ row of $A$. 
 Throughout, we assume all polyhedra are rational and given in a general description $P=\{ \mathbf{x} \in \R^n \colon A \mathbf{x} = \mathbf{b}, B \mathbf{x} \leq \mathbf{d} \}$, where we allow for the possibility that $A$ is empty (in which case $\ker(A)=\R^n$).

\paragraph{Circuits.} A formal definition of the set of circuits is as follows \cite{bdf-16,bv-17,dknv-22,dhl-15}.

\begin{defn}[Circuits]\label{defn:circuits}
Let $P = \{ \mathbf{x} \in \R^n \colon A \mathbf{x} = \mathbf{b}, B \mathbf{x} \leq \mathbf{d} \}$ be a polyhedron. The set of circuits of $P$, denoted  $\mathcal{C}(A,B)=\mathcal{C}(P)$, with respect to its linear description consists of all vectors $\mathbf{g} \in \ker(A) \setminus \{ \mathbf{0} \}$,  for which $B \mathbf{g}$ is support-minimal in the set $\{ B \mathbf{x} \colon \mathbf{x} \in \ker(A) \setminus \{ \mathbf{0} \}\}$ and $\veg$ has coprime integral components.
\end{defn}

Note that $\mathcal{C}(A,B)$ depends on the matrices $A$ and $B$. We use the notation $\mathcal{C}(P)$ when matrices $A$, $B$ are clear from the context or when we assume a minimal description.  
Geometrically, circuits correspond to the one-dimensional subspaces of $\ker(A)$ obtained by intersecting it with $\ker(B')$, where $B'$ is a row-submatrix of $B$ and $\rank\binom{A}{B'} = n-1$ \cite{r-69}. Thus, the set $\mathcal{C}(P)$ contains all edge directions of $P$ and all potential edge directions as the right-hand side vectors $\veb$ and $\ved$ vary \cite{g-75}. Any form of normalization of the circuits leads to a finite set of unique representatives $\pm \veg$ for the directions. It is standard to normalize to coprime integer components, which assumes rational data.

Algebraically, circuits correspond to dependence relations for matrix $A$ with an inclusion-minimal support with respect to $B$. This support-minimality is the reason why circuits are regarded as the natural ``difference vectors'' between points: for a standard-form polyhedron $P = \{ \vex \in \R^n \colon A \vex = \veb, \vex \geq 0 \}$, the circuits are directions $\veg$ that change an inclusion-minimal set of components of a point $\vex_1 \in P$ while maintaining feasibility ($A(\vex_1 + \lambda \veg) = \veb)$. 
For a general polyhedron $P = \{ \vex \in \R^n \colon A \vex = \veb, B \vex \leq \ved \}$, the circuits are directions $\veg$ that remain ``neutral'' ($B_{I}\veg = \veo$, for maximal $I$) to as many facets as possible while maintaining feasibility.

Circuits play an important role in the theory of oriented matroids \cite{bk-84,blswz-99} and linear programming, for example, as optimality certificates \cite{g-75,o-10,st-97}. The term circuits is named after the cycles in a network, which are the minimal dependence relations in an associated graphic matroid. For many highly-structured problems of combinatorial optimization, circuits are readily interpreted in terms of the application \cite{bv-17,kps-17}. In network applications, the circuits typically correspond to cycles, paths, or cut sets in the underlying network \cite{bdfm-18,env-22} and can be used to explain classical concepts like flow decomposition \cite{amo-93}.  

\paragraph{Circuit Walks and Short Circuit Walks.}

We begin with some notation for the discussion of circuit walks \cite{bdf-16,dhl-15}. We represent a circuit walk between two given points as an ordered list of circuits and associated step lengths.

\begin{defn} \label{defn:circuit_walk}
    Let $P = \{ \mathbf{x} \in \R^n \colon A \mathbf{x} = \mathbf{b}, B \mathbf{x} \leq \mathbf{d} \}$ be a polyhedron, and let $\vex, \vey \in P$. An ordered list $W=((\veg_1, \lambda_1), 
    \ldots, (\veg_r, \lambda_r))$ is a (maximal) circuit walk from $\vex$ to $\vey$ if the following hold:
    \begin{enumerate}
        \item $\veg_i \in \mathcal{C}(P)$ for all $i\leq r$, \hfill (Circuit Direction)
        \item $\lambda_i > 0$ for all $i\leq r$, \hfill (Positive Step Length)
        \item $\vex + \sum_{i=1}^r \lambda_i \veg_i = \vey$, \hfill (Reachability)
        \item $\vex + \sum_{i=1}^k \lambda_i \veg_i \in P$ for all $1 \leq k \leq r$, \hfill (Feasibility)
        \item $(\vex + \sum_{i=1}^{k-1} \lambda_i \veg_i )+\lambda \veg_k \notin P$ for all $1 \leq k \leq r$ and $\lambda > \lambda_k$. \hfill (Maximality)
    \end{enumerate}
    Each $(\veg_i, \lambda_i)$ is a called a step.
\end{defn}
We emphasize that, in this work, the term ``walk" necessitates maximal step length at each step.
Often, we are interested in the point resulting from a maximal step in direction $\veg$ from a point $\vev \in P$; we let $\step(\vev,\veg; P)$ denote this resulting point. When $P$ is clear from context, we simply write $\step(\vev,\veg)$. We say a circuit $\veg$ is \emph{feasible} at $\vex$ if there exists $\e > 0$ such that $\vex + \e \veg \in P$. 

\autoref{defn:circuit_walk} allows a formalization of some natural notions for what makes a walk ``short'': 
the cardinality $\lvert W \rvert = r$ determines the number of steps and the value $\sum_{i=1}^r \lVert \lambda_i \veg_i \rVert$ for some norm $\|\cdot\|$ determines the total length with respect to $\|\cdot\|$. Short circuit walks relate to the studies of the {\em circuit distance} and the {\em circuit diameter}, as well as the construction of {\em conformal sums} and {\em sign-compatible circuit walks}.

Circuit distances and diameters \cite{bfh-14} generalize combinatorial distances and diameters. The circuit distance from vertex $\vex \in P$ to vertex $\vey \in P$ is the smallest number $r= \lvert W \rvert$ such that there exists a circuit walk $W = ((\veg_1, \lambda_1), \ldots, (\veg_r, \lambda_r))$ from $\vex$ to $\vey$. Note that, unlike edge walks, circuit walks are not always reversible---the reversed list may not have maximal step lengths. Thus, the distance from $\vex$ to $\vey$ might not equal the distance from $\vey$ to $\vex$. The circuit diameter of $P$ is the largest circuit distance between any pair of vertices of $P$. The circuit analogue of the famous Hirsch conjecture, the {\em circuit diameter conjecture} \cite{bfh-14}, asks whether there always exists a circuit walk of at most $f-d$ steps for a $d$-dimensional polyhedron with $f$ facets. Unlike the disproved Hirsch conjecture \cite{kw-67,s-11}, the circuit diameter conjecture is open. 

The studies of circuit distances, diameters, and the associated conjecture are motivated in several ways. Clearly, the circuit diameter is a lower bound on the combinatorial diameter, and thus is studied as a proxy. Just as the combinatorial diameter relates to the possible efficiency of a primal Simplex method, the circuit diameter relates to the possible efficiency of a {\em circuit augmentation scheme} for solving a linear program \cite{bv-19b,bv-19c,dhl-15,env-22,gdl-15}. Further, a resolution of the circuit diameter conjecture would give insight to the reason the Hirsch conjecture does not hold: whether it is the restriction to edges or the restriction to maximal step lengths \cite{bbb-23,bsy-18}. 

Given two vertices $\vex$ and $\vey$, one method to construct a short circuit walk is to construct a walk that enters a facet incident to $\vey$ at each step, and does not leave it in any subsequent step. Such a walk necessarily has length bounded by the dimension. We call circuit walks with this property ``direct.'' This property is reminiscent of the \emph{non-revisiting property} used to analyze edge walks in polyhedra. In fact, the associated non-revisiting conjecture is equivalent to the Hirsch conjecture \cite{kw-67}. 

The classical concepts of sign-compatibility and conformal sums relate closely to the notion of a direct circuit walk. Two vectors $\vex, \vey \in \R^n$ are {\em sign-compatible} if they belong to the same orthant of $\R^n$, that is, $\vex(i) \cdot \vey(i) \geq 0$ for $i=1,...,n$. We say that $\vex$ and $\vey$ are {\em sign-compatible with respect to matrix $B$} if the corresponding vectors $B \vex$ and $B \vey$ are sign-compatible; we drop the reference to $B$ when it is clear from context. 

Let $\vex_1, \ldots, \vex_r \in \R^n$ and $\lambda_1, \ldots, \lambda_r \in \R$. The sum $\vex = \sum_{i=1}^r \lambda_i \vex_i$ is a \emph{conformal sum} (with respect to a matrix $B \in \R^{m\times n}$) if $\lambda_i > 0$ for each $1 \leq i \leq r$ and each pair $\vex_i$, $\vex_j$ is sign-compatible (w.r.t. $B$). The set of circuits is the unique, inclusion-minimal set of directions that satisfies the so-called {\em conformal sum property} \cite{g-75,o-10}: for any polyhedron $P = \{ \vex \in \R^n \colon A \vex = \veb, B \vex \leq \ved \}$, any vector $\veu \in \ker(A)$  can be written as a {\em conformal sum} of circuits with respect to $B$. A conformal sum corresponds to a sequence of (non-maximal) sign-compatible circuit steps in which the steps can be applied {\em in any order}. For vertices $\vex, \vey \in P$ and $\veu=\vey-\vex$, we call a decomposition of $\veu$ into a conformal sum of circuits a \emph{sign-compatible circuit decomposition}. We note that any order of steps leads to a sequence of feasible points in $P$; formally, $\vex + \sum_{i \in I} \lambda_i\veg_i \in P$ for every $I \subseteq \{ 1, \ldots, t \}$. A sign-compatible circuit decomposition does not necessarily have maximal step lengths, and thus it may not be a \textit{walk}: it is only guaranteed to satisfy the first four properties in \autoref{defn:circuit_walk}. The points of a sign-compatible circuit decomposition lie in a `narrow corridor' contained in $P$: the intersection of two cones emanating from $\vex$ and $\vey$. Each cone is formed from parallel copies of all facets of $P$ translated to contain $\vex$ or $\vey$, respectively, and the respective halfspaces are chosen to contain $\vey$ or $\vex$, respectively. \cite{bbb-23,bv-17}. \autoref{figure:SC-Cones} displays an example. We note that neither direct circuit walks nor sign-compatible circuit walks are guaranteed to exist \cite{bdf-16}. 

Sign-compatible circuit decompositions and walks $W$ readily lend themselves to an interpretation as being short through a small total length $\sum_{i=1}^r \lVert \lambda_i \veg_i \rVert$. In fact, given $P = \{ \vex : A\vex = \veb, B \vex \leq \ved \}$, \emph{any} sign-compatible $W$ is a minimizer of $\sum_{i=1}^r \lVert \lambda_i B\veg_i \rVert_1$, which is the 1-norm $\| B(\cdot) \|_1$ with respect to $B$. When $P$ is in standard form, any sign-compatible $W$ is a minimizer of the $1$-norm. We will be able to exhibit that the computation of a circuit decomposition or walk $W$ that minimizes $\sum_{i=1}^r \lVert \lambda_i \veg_i \rVert_p$ for $p>1$ is hard, regardless of whether $W$ is restricted to sign-compatibility or not.
Due to the arbitrary ordering of the steps in a sign-compatible decomposition, repeated use of a circuit can be avoided, which leads to the existence of a decomposition of at most $n - \rank(A)$ steps. Such a decomposition can be computed efficiently \cite{bv-19c}. These properties have made them a valuable tool in the study of circuit diameters and augmentation \cite{bbb-23,dknv-22}.

\begin{figure}
    \centering
    \begin{tikzpicture}[scale=.8, inner sep=.75mm, minicirc/.style={circle,draw=black,fill=black,thick}]
        \node (circ1) at (-2,0) [minicirc] {};
        \node (circ1) at (1,5.5) [minicirc] {};
        \draw[very thick, black] (-2,0) -- (-3,2) -- (-2,5) -- (1,5.5) -- (2,2) -- (0,0) -- cycle;
        \node at (-2.28,-.28) {$\vex$};
        \node at (1.28,5.78) {$\vey$};
        \node at (-.5,2.75) {\Large $P$};

        \draw[very thick, black] (6,0) -- (5,2) -- (6,5) -- (9,5.5) -- (10,2) -- (8,0) -- cycle;
        \node at (5.72,-.28) {$\vex$};
        \node at (9.28,5.78) {$\vey$};

        \draw[very thick, dashed, gray] (6,0) -- (11, 5/6);
        \draw[very thick, blue] (6,0) -- (7.75, 1.75);
        \draw[very thick, dashed, gray] (7.75, 1.75) -- (11, 5);
        \draw[very thick, blue] (6,0) -- (7.25,3.75);
        \draw[very thick, dashed, gray] (7.25,3.75) -- (8,6);
        \draw[very thick, dashed, gray] (6,0) -- (4.75,3.5+3.5/4);

        \draw[very thick, dashed, gray] (9,5.5) -- (4.75,5.5);
        \draw[very thick, blue] (9,5.5) -- (7.25,3.75);
        \draw[very thick, dashed, gray] (7.25,3.75) -- (4,.5);
        
        \draw[very thick, blue] (9,5.5) -- (7.75, 1.75);
        \draw[very thick, dashed, gray] (7.75,1.75) -- (7,-.5);
        \draw[very thick, dashed, gray] (9,5.5) -- (11,1.5);

        \fill[blue!15] (6,0) -- (7.75,1.75) -- (9,5.5) -- (7.25,3.75) -- cycle;
        \node (circ1) at (6,0) [minicirc] {};
        \node (circ1) at (9,5.5) [minicirc] {};
    \end{tikzpicture}
    \caption{A polytope $P$ with two vertices $\vex, \vey$ (left). The `narrow corridor' of sign-compatible circuit decompositions of $\veu = \vey - \vex$ (right).}
    \label{figure:SC-Cones}
\end{figure}

\subsection{Contributions}\label{sec:contributions}

We analyze the complexity of various problems relating to the construction of short circuit walks and show that many such problems are NP-hard in general. Additionally, we provide some examples where constructing a short circuit walk becomes efficient. 

We first consider the problem of determining the circuit distance between a pair of vertices of a polytope, formally defined as follows.\\

\begin{prob}[\textsc{Circ-Dist}]
Let $P = \{ \vex : A\vex = \veb, \, B\vex \leq \ved \}$ be a rational polyhedron, let $\vex, \vey \in P$ be vertices, and let $k \in \mathbb{N}$. Determine if there exists a list $W =  ((\veg_1, \lambda_1), \ldots, (\veg_r, \lambda_r))$ such that $W$ is a circuit walk from $\vex$ to $\vey$ with $r \leq k$. 
\end{prob} 

The above problem is already known to be NP-hard for the perfect matching polytope on a bipartite graph \cite{cs-23}. We here give a different hardness proof, which applies to $0/1$-circulation polytopes. Besides extending the result to this family of polytopes, our alternative proof allows us to infer hardness of two other problems, related to sign-compatible circuit decompositions and walks. Specifically, we formulate the following problems: 

\begin{prob}[\textsc{SC-Decomp-Dist}]\label{prob:sc_decomp}
    Let $P = \{ \vex : A\vex = \veb, \, B\vex \leq \ved \}$ be a rational polyhedron, let $\vex, \vey \in P$ be vertices, and let $k \in \mathbb{N}$. Determine if there exists a list $W=((\veg_1, \lambda_1), \ldots, (\veg_r, \lambda_r))$ such that $W$ is a sign-compatible circuit decomposition of $\vey - \vex$ with respect to $B$ with $r \leq k$. 
\end{prob} 

\begin{prob}[\textsc{SCM-Circ-Dist}]\label{prob:scm_dist}
    Let $P = \{ \vex : A\vex = \veb, \, B\vex \leq \ved \}$ be a rational polyhedron, $\vex, \vey \in P$ be vertices, and $k \in \mathbb{N}$. Determine if there exists a list $W=((\veg_1, \lambda_1), 
    \ldots, (\veg_r, \lambda_r))$ such that $W$ is a circuit walk from $\vex$ to $\vey$, $r \leq k$ and $W$ is a sign-compatible circuit decomposition of $\vey-\vex$ with respect to $B$. 
\end{prob} 

It is easy to determine if two vertices differ by a single circuit. Therefore, the hardness of \textsc{Circ-Dist}, \textsc{SC-Decomp-Dist}, and \textsc{SCM-Circ-Dist} becomes interesting when $k \geq 2$. We prove \textsc{Circ-Dist}, \textsc{SC-Decomp-Dist} and \textsc{SCM-Circ-Dist} are NP-hard, even when $k=2$, $P$ is a circulation polytope on an Eulerian digraph with in-degree and out-degree $2$, and each edge of the digraph has capacity bound $0 \leq \vex(e) \leq 1$ (\autoref{cor:2step-nphard}). To do so, we prove \autoref{thm:2stephard}, which relates the circuit distance to the existence of Hamiltonian cycles in the underlying graph. In this setting, the flow created by sending one unit of flow across every edge is a vertex; we call this vertex the \emph{full flow}.

\begin{thm}
Let $G$ be an Eulerian digraph where each node has in-degree and out-degree $2$. Let $P$ be the $0/1$-circulation polytope on $G$, let $\mathbf{0} \in P$ be the zero flow, and let $\vex \in P$ be the full flow. The circuit distance from $\mathbf{0}$ to $\vex$ is $2$ if and only if $G$ can be decomposed into two simple Hamiltonian dicycles. 
\end{thm}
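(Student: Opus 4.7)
The plan is to translate the problem into the language of simple directed cycles of $G$. Since the flow-conservation matrix of $P$ is totally unimodular, the circuits of $P$ are exactly the $\{-1,0,+1\}$-valued vectors whose supports are simple (signed) cycles of $G$, where a $+1$ on an edge indicates traversal in its given orientation and a $-1$ indicates traversal against. I would then analyse 2-step circuit walks $\mathbf{0} \to \vex$ by exploiting feasibility of the intermediate point together with the coprime integrality of each $\veg_i$.

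For the forward direction, assume $G = H_1 \sqcup H_2$ is an edge-disjoint union of two simple Hamiltonian dicycles. Letting $\veg_i$ be the characteristic vector of $H_i$, each $\veg_i$ is a circuit and $\veg_1 + \veg_2 = \vex$. Because $\veg_i \in \{0,1\}^E$, the maximal step from $\mathbf{0}$ in direction $\veg_1$ has length exactly $1$, and edge-disjointness of $H_1, H_2$ makes the maximal step from $\veg_1$ in direction $\veg_2$ also equal to $1$. Hence $((\veg_1, 1), (\veg_2, 1))$ is a valid 2-step circuit walk. Distance $1$ is impossible since $\vex$ is not a positive multiple of any circuit: the support of any circuit is a simple cycle, of size at most $|V| < |E|$ under the degree-$2$ hypothesis, whereas $\vex$ has support $E$.

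For the reverse direction, assume $((\veg_1, \lambda_1), (\veg_2, \lambda_2))$ is a circuit walk from $\mathbf{0}$ to $\vex$. The intermediate point $\lambda_1 \veg_1$ lies in $P$, and $\veg_1 \in \{-1, 0, 1\}^E$, so necessarily $\veg_1 \in \{0,1\}^E$ and $\lambda_1 \le 1$. The equation $\lambda_2 \veg_2 = \vex - \lambda_1 \veg_1 \ge \mathbf{0}$ then yields $\veg_2 \in \{0,1\}^E$ as well. On any edge $e$ with $\veg_1(e) = 0$ one has $\lambda_2 \veg_2(e) = 1$, and since $\veg_2(e) \in \{0,1\}$ this forces $\lambda_2 = 1$; such an edge exists because $\veg_1 \ne \vex$, by the circuit-support argument from the previous paragraph. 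Symmetrically $\lambda_1 = 1$, so $\veg_1 + \veg_2 = \vex$ and the supports of $\veg_1, \veg_2$ partition $E$. Each $\veg_i$ is therefore the indicator of a simple dicycle in $G$; because together they cover every edge of a digraph in which every vertex has in-degree and out-degree $2$, each dicycle uses exactly one incoming and one outgoing edge at every vertex, and hence is Hamiltonian.

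The main obstacle is pinning down that both step lengths must equal $1$ and that neither circuit may contain a backward edge. Both rigidities depend crucially on the degree-$2$ hypothesis, which simultaneously rules out the degenerate case $\veg_i = \vex$ and upgrades the simple dicycle decomposition to a Hamiltonian one. Without that assumption, the same analysis would still produce an edge-disjoint pair of simple dicycles partitioning $E$, but Hamiltonicity could fail.
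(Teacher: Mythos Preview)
Your proof is correct and follows essentially the same route as the paper: identify circuits with simple cycles, show both steps are forward dicycles with step length~$1$, deduce that their supports partition $E$, and conclude Hamiltonicity from the degree-$2$ hypothesis. The only minor difference is that the paper obtains $\lambda_1=\lambda_2=1$ directly from the maximality requirement together with the residual-network description of feasible circuits, whereas you deduce it purely from feasibility of the intermediate point and reachability; both arguments work here.
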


\autoref{thm:2stephard} is applied to show hardness of \textsc{Circ-Dist} (\autoref{cor:2step-nphard}), \textsc{SCM-Circ-Dist} (\autoref{cor:scm-2step-hard}), and \textsc{SC-Decomp-Dist} (\autoref{cor:scm-decomp-hard}). polyhedra. When the value of $k$ is polynomial in the size of the input, \textsc{Circ-Dist}, \textsc{SC-Decomp-Dist}, and \textsc{SCM-Circ-Dist} are NP-complete. The circulation polytopes belong to the class of network-flow polytopes, which are arguably the most important subclass of \emph{totally-unimodular} polyhedra. Recall that a matrix $A$ is totally-unimodular (TU) if every square, nonsingular submatrix of $A$ has determinant $\pm 1$. A polyhedron $P$ is a \emph{TU polyhedron} if there exist matrices $A$ and $B$ such that $\binom{A}{B}$ is TU and vectors $\veb$, $\ved$ such that $P = \{ \vex : A\vex = \veb, B \vex \leq \ved \}$. The circuits of network-flow polyhedra are well-understood: each circuit is a (simple) cycle in the underlying undirected graph. Because network-flow polyhedra are TU polyhedra, each circuit has components in $\{ 0, \pm 1 \}$. For a $0/1$-network-flow polytope, every circuit walk is a \emph{vertex walk}: the resulting point after each maximal step is a vertex. Despite this simplicity, short circuit walks as described in Problems 1, 2, and 3 are still hard to find.

Our second notion of a short circuit walk is a circuit walk whose total length is minimized for a given norm. We transfer the same example, a $0/1$-circulation polytope on an Eulerian digraph with in-degree and out-degree $2$, to show that determining the minimal total length of a circuit walk is NP-hard. We formulate the problem as follows.

\begin{prob*}[$\lVert \cdot \rVert$\textsc{-Dist}]
    Let $P = \{ \vex : A\vex = \veb, \, B\vex \leq \ved \}$ be a rational polyhedron, $\vex, \vey \in P$ be vertices, and $\lVert \cdot \rVert$ be a norm on $P$. Determine the minimum of $\sum_{i=1}^k \lVert \lambda_i \veg_i \rVert$ where $((\veg_1, \lambda_1), 
    \ldots, (\veg_r, \lambda_r))$ is a circuit walk from $\vex$ to $\vey$.
\end{prob*}

We show that $\lVert \cdot \rVert$\textsc{-Dist} is NP-hard for the norms $\lVert \cdot \rVert_p$ with $1 < p \leq \infty$ (Lemmas \ref{lem:p-norm} and \ref{lem:sup-norm}), even for network-flow polytopes. Given $1 < p < \infty$, we show that the minimum length between a particular pair of vertices, with respect to $\lVert \cdot \rVert_p$, is $2\sqrt[p]{n}$ if and only if the underlying digraph can be decomposed into two Hamiltonian cycles. Given $p = \infty$, we show that the minimum length between a particular pair of vertices, with respect to $\lVert \cdot \rVert_p$, is $2$ if and only if the underlying digraph can be decomposed into two Hamiltonian cycles.

In Section \ref{sec:single_step_hard}, we study the complexity of constructing a direct circuit walk. In particular, if each step of the walk picks up a new, correct facet, then the number of steps of the walk is bounded by the dimension of the polyhedron. This first motivates studying the hardness of finding a circuit step that enters a particular facet. We formulate the problem as follows.

\begin{prob*}[\textsc{Facet-Step}]
Given a rational polyhedron $P = \{ \vex : A\vex = \veb, \, B \vex \leq \ved \}$, a facet $F$ of $P$ defined by the tight inequality $B^\T_i \vex = \ved(i)$,
and a point $\veu \in P$, compute a circuit $\veg$ such that $\step(\veu, \veg) \in F$ or determine that no such $\veg$ exists. 
\end{prob*}

It is known to be NP-hard to find a circuit step that enters a particular face $F$ of a polytope $P$ with dimension $\operatorname{dim}(F) \leq \operatorname{dim}(P)-2$ \cite{dks-22}; we generalize this result to prove that \textsc{Facet-Step} is NP-hard in Subsection \ref{subsec:facet-step}. 

\begin{thm}
    \textsc{Facet-Step} is NP-complete.
\end{thm}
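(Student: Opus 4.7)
The plan is to first verify that \textsc{Facet-Step} lies in NP, and then establish NP-hardness by reduction from a suitable combinatorial problem, adapting the arguments of \cite{dks-22} from faces of codimension $\ge 2$ to the codimension-one (facet) setting.

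For NP membership, I would observe that any circuit $\veg \in \mathcal{C}(A,B)$ admits a bit-encoding polynomial in the size of $A,B$, since $\veg$ generates a one-dimensional subspace of $\ker(A) \cap \ker(B')$ for some row-submatrix $B'$ with $\rank\binom{A}{B'} = n-1$, and its coprime-integer representative is bounded by subdeterminants of $\binom{A}{B'}$. A certificate is thus a pair $(\veg,\lambda)$; one verifies in polynomial time that $\veu + \lambda\veg \in P$, that $B_i^\T(\veu + \lambda\veg) = \ved(i)$, and via a standard ratio test that $\lambda$ is maximal.

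For NP-hardness, I would reduce from \textsc{Subset Sum} (or \textsc{Partition}). Given an instance $(a_1,\ldots,a_n;T)$, the idea is to construct a polyhedron whose inequality description contains the single target constraint $\sum_j a_j x_j \le T$ (defining the facet $F$) together with carefully chosen box constraints and auxiliary constraints, starting from a vertex $\veu$ at which $F$ is not tight. Support-minimality of circuits, combined with the auxiliary constraints, should force any feasible direction $\veg$ to correspond to a choice of subset $S \subseteq [n]$; the maximal step $\step(\veu,\veg)$ then lies on $F$ exactly when the corresponding subset sums to $T$.

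The main obstacle is precisely the support-minimality condition in the definition of circuits: the codimension-$\ge 2$ hardness of \cite{dks-22} exploits the fact that two or more constraints must become tight simultaneously, which is a strong structural constraint on circuit supports, whereas entering a facet only forces a single inequality to tighten. To recover hardness, I anticipate the right approach is a lifting construction: introduce auxiliary variables linked to the $x_j$ through equalities in $A$, so that reaching the single facet hyperplane in the lifted space is equivalent to reaching what was previously a codimension-two face in a projected polytope. The bulk of the argument will be a case analysis verifying that no spurious circuits are introduced by the lifting that would allow the subset-sum choice to be bypassed in polynomial time.
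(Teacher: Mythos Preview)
Your NP-membership argument is fine and essentially matches the paper's.

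For NP-hardness, however, your proposal has a genuine gap: you correctly identify the obstacle (support-minimality makes ``entering a single facet'' a much weaker event than ``entering a codimension-$\ge 2$ face''), but you do not actually resolve it. The proposed Subset-Sum reduction and the lifting idea are stated only at the level of intent (``introduce auxiliary variables linked to the $x_j$ through equalities in $A$''), with the case analysis deferred. Without the explicit construction and a verification that every circuit of the lifted polytope that reaches $F$ encodes a valid subset, this is a plan rather than a proof. In particular, lifting via equalities enlarges $\ker(A)$ in a controlled way, but there is no argument that the new circuits cannot bypass the subset-sum structure; this is precisely the point where such reductions typically fail.

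The paper takes a much simpler and more direct route that avoids building a new reduction from scratch. It starts from the known NP-hardness of \textsc{Face-Step} on the bipartite matching polytope $P(H)$ from \cite{dks-22}, where the hard target is the optimal face $G=\{\vex\in P(H):\vecc^\T\vex=\omo\}$. One then \emph{adds a single inequality} $\vecc^\T\vex\ge \omo+\e$ (for a small $\e$ chosen so that no vertex of $P(H)$ lies strictly between $\omo$ and $\omo+\e$) to obtain $P(H,\e)$, and the former face $G$ becomes, in $P(H,\e)$, the facet $F=\{\vex:\vecc^\T\vex=\omo+\e\}$. The key observation is that any circuit $\veg$ of $P(H,\e)$ with $\step(\vev,\veg;P(H,\e))\in F$ must satisfy $\vecc^\T\veg\neq 0$, hence is already a circuit of $P(H)$; extending the step past $F$ lands at a vertex of $P(H)$ with objective value $\omo$, i.e., in $G$. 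Thus \textsc{Facet-Step} on $P(H,\e)$ solves \textsc{Face-Step} on $P(H)$. This ``cut the polytope just above the optimal face'' trick is the missing idea in your proposal and is far cleaner than a bespoke lifting.
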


We are then interested in the more general problem of finding a circuit step that enters \textit{any} facet incident to a target vertex. Given a vertex $\vew \in P$, we define $\mathcal{F}(\vew)$ to be the union of facets of $P$ incident to $\vew$. We formulate the problem as follows.

\begin{prob*}[\textsc{Incident-Facet-Step}]
Given a rational polyhedron $P = \{ \vex : A\vex = \veb, \, B\vex \leq \ved \}$ and vertices $\vev, \vew$ of $P$, compute a circuit $\veg$ such that $\step(\vev, \veg) \in \mathcal{F}(\vew)$ or determine that no such $\veg$ exists. 
\end{prob*}

In Subsection \ref{subsec:incident-facet}, we prove that \textsc{Incident-Facet-Step} is NP-complete. To do so, we modify a hard instance of \textsc{Facet-Step}. The modification is chosen carefully so that a solution to \textsc{Incident-Facet-Step} is a solution to the unmodified instance of \textsc{Facet-Step}.

\begin{thm}
    \textsc{Incident-Facet-Step} is NP-complete.
\end{thm}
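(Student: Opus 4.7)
The plan is to show both membership in NP and NP-hardness. Membership is immediate: a circuit $\veg$ serves as a polynomial-size certificate. One computes $\step(\vev, \veg)$ via a minimum-ratio test, and then checks in polynomial time whether the resulting point satisfies at least one of the facet-defining inequalities of $P$ with equality among those that are tight at $\vew$.

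For NP-hardness, I would reduce from \textsc{Facet-Step}, which was shown NP-complete in the preceding subsection. Given a hard instance $(P, F, \veu)$ with $F$ defined by $B_1^\T \vex = \ved(1)$, I would construct $(P', \vev, \vew)$ so that any circuit $\veg'$ solving \textsc{Incident-Facet-Step} in $P'$ projects to a circuit $\veg$ of $P$ with $\step(\veu, \veg) \in F$, and conversely any solution $\veg$ to \textsc{Facet-Step} lifts to a circuit of $P'$ that solves \textsc{Incident-Facet-Step}. The crux is engineering $P'$ so that every facet of $P'$ incident to $\vew$ is ``$F$-like'': hitting it with a maximal step from $\vev$ forces the underlying step in $P$ to end on $F$.

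A natural template is to introduce an auxiliary coordinate or a small local modification near a chosen vertex of $F$, creating several facets of $P'$ that meet at a new vertex $\vew$ and all lie over $F$. To avoid the trivial obstruction that $\vew$ would also be incident to a ``top'' facet reachable by a purely auxiliary direction from $\vev$, one can couple the new coordinate to the defining inequality of $F$, so that the only circuits of $P'$ capable of entering a facet incident to $\vew$ from $\vev$ are those whose projection is a circuit of $P$ that ends on $F$ after its maximal step.

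The main obstacle is precisely this backward correspondence. Any construction that creates extra facets incident to $\vew$ reachable by trivial or purely auxiliary circuits yields false positives, which the reduction must exclude. I would therefore classify the circuits of $P'$ into (i) lifts of circuits of $P$ and (ii) genuinely new circuits exploiting the auxiliary structure, and then argue that type (ii) circuits are either infeasible at $\vev$ or have a maximal step that cannot terminate in $\mathcal{F}(\vew)$. Once this classification is in place, the equivalence of the two instances follows by checking that maximal step lengths of lifted circuits in $P'$ agree with those of the original circuits in $P$, and that landing in a facet incident to $\vew$ in $P'$ forces landing in $F$ in $P$.
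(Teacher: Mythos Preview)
Your NP-membership argument is fine, and your high-level plan---reduce from \textsc{Facet-Step} by building a polyhedron $P'$ with a vertex $\vew$ whose incident facets all ``sit over'' $F$---is the right idea and matches the paper's strategy. But what you have written is a plan, not a proof: you never actually specify $P'$, and the step you flag as ``the main obstacle'' (ruling out new circuits that reach $\mathcal{F}(\vew)$ without corresponding to a \textsc{Facet-Step} solution) is exactly where essentially all the work lies. Saying you would ``classify the circuits'' and ``argue that type (ii) circuits are either infeasible or cannot terminate in $\mathcal{F}(\vew)$'' is not an argument until you have a concrete $P'$ against which to check it.

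The paper's construction is also rather different from the ``auxiliary coordinate'' template you suggest. It stays in the same ambient space: starting from the polytope $P(H,\e)$ of \autoref{thm:facet_step_hard} with its distinguished facet $F=\{\vecc^\T\vex=\ome\}$, it chooses a point $\vez$ in the interior of $P(H)$ just below $F$ (so $\vecc^\T\vez$ is slightly less than $\ome$), and for each ridge $G_i=F\cap F_i$ of $F$ it builds a hyperplane $\veq_i^\T\vex=\ver(i)$ through both $G_i$ and $\vez$. Replacing $\vecc^\T\vex\geq\ome$ by the inequalities $\veq_i^\T\vex\leq\ver(i)$ yields $P'(H,\e)$, in which $\vez$ is a vertex and \emph{every} facet incident to $\vez$ is one of these tilted hyperplanes. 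The delicate part is then a quantitative argument: any circuit $\veg$ of $P'(H,\e)$ with $\veq_i^\T\veg=0$ for some $i$ would force $W-\ome\leq 1$ via bounds on $B_i^\T(\vex_i-\vez)$ (handled in two cases, non-negativity versus degree constraints of the matching polytope), contradicting $W\gg 2n$. This is exactly the ``false positive'' exclusion you identify as the obstacle, and it relies on the specific arithmetic of the bipartite-matching setup and the careful choice of $\vez$. Your proposal does not supply any analogue of this step, and an auxiliary-coordinate construction would need its own version of it---which is not obviously easier.
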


\textsc{Incident-Facet-Step} is related to the problem \textsc{SCM-Step}, which asks if there exists a maximal circuit step that can appear as the first step in a sign-compatible circuit decomposition. While these two problems are not equivalent, a positive answer to \textsc{SCM-Step} is a positive answer to \textsc{Incident-Facet-Step}. At present, our methods do not resolve the hardness of \textsc{SCM-Step} in general. In Section \ref{sec:TU_easy}, we investigate cases where \textsc{Incident-Facet-Step} and \textsc{SCM-Step} are easy. 

While Sections \ref{sec:short_walk_hard} and \ref{sec:single_step_hard} are primarily focused on showing several problems are hard, in Section \ref{sec:TU_easy}, we address cases where finding short circuit walks is easy. For every polyhedron $P = \{ \vex: A\vex = \veb, \, B\vex \leq \ved \}$, there is an associated polyhedron $P_{A,B}$ called the \emph{polyhedral model of circuits} \cite{bv-19c}. The vertices of $P_{A,B}$ are of form $(\vex, \vey^+, \vey^-)$, and if $\vex \neq \mathbf{0}$, then $\vex$ is a circuit of $P$. We demonstrate with \autoref{algo:TU_facet_step} how to use $P_{A,B}$ for a TU polyhedron $P$ to efficiently solve \textsc{Incident-Facet-Step}. 

\begin{thm}
    \textsc{Incident-Facet-Step} can be solved in polynomial time for TU polyhedra.
\end{thm}

The proof of correctness (\autoref{lem:TU-Alg-Poly}) crucially uses the fact that all circuits of TU polyhedra have components in $\{ 0, \pm 1 \}$.  
Because all vertices corresponding to sign-compatible circuits are on a shared face of $P_{A, B}$
\cite{bv-19c}, we are able to further show that the problem can be solved efficiently for TU polyhedra.

Additionally, we provide graph-theoretic algorithms  to solve \textsc{Incident-Facet-Step} and \textsc{SCM-Step} for network-flow polyhedra. \autoref{algo:NF_facet_incident} gives an approach using the residual network with respect to a flow to determine the solution to \textsc{Incident-Facet-Step}. The algorithm essentially finds a cycle in a modified version of the residual network. To find a sign-compatible circuit step, we show that we only need to adjust the construction of the modified residual network: some edges must be deleted and some capacities changed.

Section \ref{sec:TU_easy} concludes by demonstrating that the class of $(n,d)$-parallelotopes, a generalization of parallelotopes, has the property that there exists a sign-compatible circuit walk between every pair of vertices. Thus, they represent a class of yes-instances for \textsc{Incident-Facet-Step} and \textsc{SCM-Step} regardless of the vertices chosen. In fact, under some additional assumptions, a simple polyhedron with a sign-compatible circuit walk between each pair of vertices is an $(n, \, d)$-parallelotope.

\begin{thm}
    Let $P$ be a simple, pointed polyhedron. 
    If for all pairs of vertices $\vev$, $\vew \in P$, every sign-compatible circuit decomposition of $\vew-\vev$ gives a sign-compatible circuit walk of length $d$, where $d$ is the dimension of the minimal face containing $\vev$ and $\vew$,
    then $P$ is an $(n,d^*)$-parallelotope, where $d^*$ is the maximum of $d$ over all choices of $\vev$ and $\vew$. 
\end{thm}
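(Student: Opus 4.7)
The strategy is to show that the hypothesized rigidity forces the minimal face through any two vertices to be a parallelotope, and then to extract the global structure of $P$ from this local structure. I would proceed in three stages: analyze two-dimensional minimal faces, lift to arbitrary dimension, and then globalize to $P$ itself.

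First, I would analyze the two-dimensional case. Fix a pair of vertices $\vev, \vew$ whose minimal containing face $F$ has dimension $2$, and take any sign-compatible decomposition $((\veg_1, \lambda_1), (\veg_2, \lambda_2))$ of $\vew - \vev$. Since sign-compatibility is invariant under reordering, the hypothesis forces both orderings to be maximal walks, so both $\vev + \lambda_1 \veg_1$ and $\vev + \lambda_2 \veg_2$ are maximal circuit steps whose successor step terminates at the vertex $\vew$. Using the simpleness of $P$ --- so that the vertex $\vew$ has exactly $\dim P$ tight facets and each edge through $\vew$ has exactly $\dim P - 1$ --- I would argue that both intermediate points are themselves vertices of $P$ adjacent to both $\vev$ and $\vew$. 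Consequently the four points form a parallelogram contained in $F$. To show $F$ equals this parallelogram, I would rule out the presence of additional vertices of $F$: any extra vertex would yield a sign-compatible decomposition whose reordering produces a non-maximal step, contradicting the hypothesis.

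Second, I would lift to higher dimension by induction on $d$. Given vertices $\vev, \vew$ whose minimal face $F$ has dimension $d \geq 2$, take a sign-compatible decomposition $((\veg_1, \lambda_1), \ldots, (\veg_d, \lambda_d))$. Each pair of indices produces a parallelogram $2$-face by the first stage, and iterating the reorderability argument shows that all $2^d$ partial sums $\vev + \sum_{i \in S} \lambda_i \veg_i$, $S \subseteq \{1, \ldots, d\}$, are vertices of $F$. Together with the pairwise parallelogram structure, this forces $F$ to be a $d$-parallelotope with generating edges $\lambda_1 \veg_1, \ldots, \lambda_d \veg_d$. Finally, taking $\vev, \vew$ realizing $d = d^*$, the minimal face $F^*$ is a $d^*$-parallelotope; by maximality of $d^*$ and the pointed, simple assumptions on $P$, any vertex of $P$ outside $F^*$ would yield a pair of vertices with strictly larger minimal face dimension, contradicting the choice of $d^*$. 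Hence $P = F^*$, which is an $(n, d^*)$-parallelotope.

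The main obstacle is the first stage: proving that intermediate points of a sign-compatible walk between vertices are themselves vertices. The walk condition alone guarantees only that each step is maximal, not that its endpoint is a vertex. The argument must combine the reorderability of sign-compatible decompositions, the fact that the final point $\vew$ is a vertex, and the simpleness of $P$, in order to pin down exactly $\dim P$ tight facets at each intermediate point. Ruling out $F$ being a polygon with more than four vertices (say a pentagon or hexagon) requires a careful contradiction argument via an auxiliary sign-compatible decomposition between an extra vertex and $\vev$ whose reorderings violate either maximality of some step or the prescribed length $d=2$.
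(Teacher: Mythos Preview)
Your globalization in Stage 3 rests on a misreading of the definition. An $(n,d^*)$-parallelotope is \emph{not} required to equal a single $d^*$-parallelotope; rather, the condition is that for every pair of vertices the minimal containing face is a parallelotope, with $d^*$ the maximum such dimension. The triangular prism is an $(3,2)$-parallelotope whose maximal minimal face is a square, yet the prism is not itself a parallelotope and has vertices outside any chosen square face. Your claim that ``any vertex of $P$ outside $F^*$ would yield a pair of vertices with strictly larger minimal face dimension'' is therefore false, and the conclusion $P = F^*$ does not follow. The correct globalization is immediate once Stages 1--2 are done: you have shown that \emph{every} minimal face is a parallelotope, and that is precisely the definition.

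Your Stages 1--2 are plausible in outline but more laborious than necessary, and the ``main obstacle'' you flag---showing intermediate points are vertices---is genuinely delicate in your framework. The paper sidesteps both the induction and this obstacle by working directly with the facets of the minimal face $F$. Since $F$ is simple of dimension $d$, there are exactly $d$ facets $F_1,\ldots,F_d$ of $F$ at $\vev$ and $d$ facets $G_1,\ldots,G_d$ at $\vew$. Maximality plus sign-compatibility forces the first step $\veg_i$ (for any ordering) to enter some $G_j$, and reorderability then forces every other $\veg_k$ to be parallel to that $G_j$. Applying the hypothesis to the reversed decomposition from $\vew$ to $\vev$ gives the same parallelism with the $F_j$'s. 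Hence each $\veg_i$ is parallel to $d-1$ of the $F_j$'s, which by simplicity makes it an edge direction at $\vev$; the inner-cone/elementary-cone identification then gives that $F$ is a parallelotope in one stroke, with no need to verify that intermediate points are vertices or to induct on $d$.
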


We conclude in Section \ref{sec:conclusion} with a discussion on related open questions. This includes, but is not limited to, the hardness of determining the circuit diameter, the hardness of determining if there exists a sign-compatible circuit decomposition with length dimension minus 1, and a classification of polyhedra that have sign-compatible circuit walks between every pair of vertices. 

\setcounter{thm}{0}

\section{Determining the Shortest Circuit Walk is Hard}\label{sec:short_walk_hard}

In this section, we show that it is hard to construct the shortest circuit walks for either notion of short: a circuit walk with the fewest steps or a circuit walk with minimal total length. In particular, we first show in Section \ref{subsec:cd_hard} that the problems \textsc{Circ-Dist}, \textsc{SCM-Circ-Dist}, and \textsc{SC-Decomp-Dist} are hard.
Further, in Section \ref{subsec:cd_complete} we prove that the problems are NP-complete when the number of steps is bounded by the size of the problem input. In Section \ref{subsec:min_length_hard}, we show that it is NP-hard to determine the minimal total length of a circuit walk. In Section \ref{subsec:addtl_remarks}, we state some additional remarks on approximability.

\subsection{Hardness of Circuit Distance}\label{subsec:cd_hard}

First, we prove \textsc{Circ-Dist} is NP-hard, and as corollaries we show \textsc{SCM-Circ-Dist} and \textsc{SC-Decomp-Dist} are NP-hard. We will prove hardness of \textsc{Circ-Dist} by reduction to the \textsc{k-Hamiltonian-Path} problem (which asks if  an Eulerian digraph can be decomposed into $k$ simple Hamiltonian dicycles), a known NP-complete problem \cite{p-84}, even for an Eulerian digraph with in-degree and out-degree $2$ at each node \cite{p-84}. We assume this restricted setting in our proofs. This setting and its undirected variant, $4$-regular graphs, have been instrumental in establishing the hardness of cycle-finding and tree decomposition problems \cite{ach-21,ipst-23}.

We begin with some background on network-flow polytopes. For a directed graph $G$, a set of supplies and demands $\veb(i)$ for each vertex $i$, and a set of costs $\vecc(j)$ and capacities $\veu(j)$ for each edge $j$, the \emph{network-flow problem} is modeled as the linear program
\[
    \min \{ \vecc^\T\vex : A_G\vex = \veb, \mathbf{0} \leq \vex \leq \veu \},
\]
where $A_G$ is the oriented node-arc incidence matrix for $G$. The vector $\vex$ is interpreted as the amount of flow sent on each edge. We call the underlying polytope the \emph{network-flow polytope}, which consists of all feasible flows that satisfy the supply and demand constraints. Given $\vex' \in P$, we make use of the \textit{residual network} $G(\vex')$ which is constructed as follows (see \cite{amo-93}). The supply/demand at each node is replaced with a $0$. The capacities $(0, \veu(e))$ on each edge $e$ are replaced with the capacities $(0, \veu(e) - \vex(e))$. Finally, for each edge $e = (a,b)$ with $\vex(e) > 0$, we include ``reversed'' edges $e' = (b,a)$ with capacities $(0, \vex(e))$. See \autoref{figure:residual_net_ex} for an example. The circuits of the network-flow polytope are characterized in \autoref{prop:circ-char}.

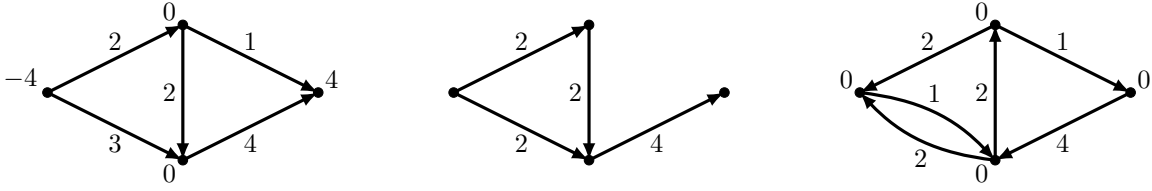
\begin{figure}
    \centering
    \begin{tikzpicture}[scale=.9]
        \draw[very thick, -latex] (0, 0) -- (2, 1);
        \draw[very thick, -latex] (0, 0) -- (2, -1);
        \draw[very thick, -latex] (2, 1) -- (2, -1);
        \draw[very thick, -latex] (2, 1) -- (4, 0);
        \draw[very thick, -latex] (2, -1) -- (4, 0);

        \draw[fill=black] (0, 0) circle (2pt);
        \draw[fill=black] (2, 1) circle (2pt);
        \draw[fill=black] (2, -1) circle (2pt);
        \draw[fill=black] (4, 0) circle (2pt);

        \node at (-.4, .2) {$-4$};
        \node at (1.8, 1.2) {$0$};
        \node at (1.8, -1.2) {$0$};
        \node at (4.2, .2) {$4$};

        \node at (1, .75) {$2$};
        \node at (1, -.75) {$3$};
        \node at (3, .75) {$1$};
        \node at (3, -.75) {$4$};
        \node at (1.8, 0) {$2$};

        \draw[very thick, -latex] (6, 0) -- (8, 1);
        \draw[very thick, -latex] (6, 0) -- (8, -1);
        \draw[very thick, -latex] (8, 1) -- (8, -1);
        \draw[very thick, -latex] (8, -1) -- (10, 0);

        \draw[fill=black] (6, 0) circle (2pt);
        \draw[fill=black] (8, 1) circle (2pt);
        \draw[fill=black] (8, -1) circle (2pt);
        \draw[fill=black] (10, 0) circle (2pt);

        \node at (7, .75) {$2$};
        \node at (7, -.75) {$2$};
        \node at (9, -.75) {$4$};
        \node at (7.8, 0) {$2$};

        \draw[very thick, latex-] (12, 0) -- (14, 1);
        \draw[very thick, latex-] (12, 0) to[bend right=20] node[below] {$2$} (14, -1);
        \draw[very thick, -latex] (12, 0) to[bend left=20] node[above] {$1$} (14, -1);
        \draw[very thick, latex-] (14, 1) -- (14, -1);
        \draw[very thick, -latex] (14, 1) -- (16, 0);
        \draw[very thick, latex-] (14, -1) -- (16, 0);

        \draw[fill=black] (12, 0) circle (2pt);
        \draw[fill=black] (14, 1) circle (2pt);
        \draw[fill=black] (14, -1) circle (2pt);
        \draw[fill=black] (16, 0) circle (2pt);

        \node at (11.8, .2) {$0$};
        \node at (13.8, 1.2) {$0$};
        \node at (13.8, -1.2) {$0$};
        \node at (16.2, .2) {$0$};

        \node at (13, .75) {$2$};
        \node at (15, .75) {$1$};
        \node at (15, -.75) {$4$};
        \node at (13.8, 0) {$2$};
    \end{tikzpicture}
    \caption{Construction of a residual network. Original network (left), feasible flow (center) and residual network (right).}
    \label{figure:residual_net_ex}
\end{figure}

\begin{prop} \label{prop:circ-char}
The circuits of a network-flow polytope are  the simple cycles of the underlying undirected graph. The feasible circuits at a flow $\vex'$ are the simple dicycles in the underlying residual network $G(\vex')$.
\end{prop}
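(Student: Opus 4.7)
The network-flow polytope is $P=\{\vex\in\R^n : A_G\vex=\veb,\; \veo\le\vex\le\veu\}$, so in the notation of \autoref{defn:circuits} we have $B=\binom{I}{-I}$. Since $B$ is (up to sign) the identity, the support of $B\veg$ equals the support of $\veg$, so a circuit is precisely a nonzero element $\veg\in\ker(A_G)$ whose support is inclusion-minimal among nonzero elements of $\ker(A_G)$, scaled to coprime integer entries. The first step of the plan is therefore to record this reduction: the two claims of the proposition amount to (a) identifying support-minimal elements of $\ker(A_G)$ with simple cycles, and (b) identifying those that point into $P$ from $\vex'$ with simple dicycles of $G(\vex')$.

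For (a), I would use the standard circulation-space argument. The kernel $\ker(A_G)$ is the space of real circulations on $G$. Given $\veg\in\ker(A_G)\setminus\{\veo\}$ with support $S$, every node of the subgraph induced by $S$ has signed incidence zero, so in the underlying undirected graph the induced subgraph has no degree-one vertex and hence contains a cycle $C$. Choose an orientation of $C$ and let $\veh\in\{0,\pm1\}^n$ record $+1$ on edges of $C$ traversed with the orientation of $G$ and $-1$ on edges traversed against; then $\veh\in\ker(A_G)$, is nonzero, and $\mathrm{supp}(\veh)=E(C)\subseteq S$. If $\veg$ is support-minimal, this forces $S=E(C)$ and $\veg$ proportional to $\veh$; coprime integer normalization then gives $\veg=\pm\veh$. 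Conversely, any simple cycle yields such a $\pm\{0,1\}$-vector $\veh\in\ker(A_G)$, and its support, being a simple cycle, contains no smaller cycle, so no nonzero element of $\ker(A_G)$ has strictly smaller support. This gives the bijection between circuits and simple cycles of the underlying undirected graph.

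For (b), a circuit $\veg$ is feasible at $\vex'$ iff there exists $\e>0$ with $\veo\le\vex'+\e\veg\le\veu$. Reading this coordinatewise on the support of $\veg$ (which, by (a), has entries in $\{\pm1\}$), feasibility is equivalent to: $\veg(e)=+1\Rightarrow\vex'(e)<\veu(e)$, and $\veg(e)=-1\Rightarrow\vex'(e)>0$. These are exactly the conditions that the corresponding edge appears in $G(\vex')$ oriented as the original arc (in the first case) or as the reverse arc (in the second case). Combining this with (a), the simple cycle of $G$ carrying $\veg$ lifts to a simple directed cycle in $G(\vex')$, and vice versa. The only real subtlety, which I would be careful to verify, is the normalization step in (a): that support-minimality plus coprime integers pins $\veg$ down to $\pm\veh$ rather than some integer multiple; this follows because $\veh$ already has $\{0,\pm1\}$-entries and $\mathrm{supp}(\veh)=\mathrm{supp}(\veg)$. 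No deeper obstacle is expected, since the argument is essentially the classical flow-decomposition fact packaged to fit \autoref{defn:circuits}.
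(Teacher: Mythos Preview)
Your argument is correct and is essentially the classical proof. Note, however, that the paper does not actually prove \autoref{prop:circ-char}: it states the proposition and then cites \cite{m-60,m-66,r-69} for the first sentence, only spelling out the $\{0,\pm1\}$-vector interpretation in the surrounding text. So there is nothing to compare against beyond observing that your plan supplies the standard details those references contain.

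One small point worth tightening in part~(a): when you conclude that $\veg$ is proportional to $\veh$, the statement ``support-minimality plus coprime integers pins $\veg$ down to $\pm\veh$'' skips the reason proportionality holds in the first place. The clean way is to note that $\veg-\frac{\veg(e)}{\veh(e)}\veh$ lies in $\ker(A_G)$ with support strictly contained in $E(C)$, hence is zero; only then does coprime normalization yield $\veg=\pm\veh$. You allude to this as ``the only real subtlety,'' but the sentence you wrote justifies the normalization step, not the proportionality step.
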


The circuits of the network-flow polytope are described as simple cycles of the underlying graph \cite{m-60, m-66, r-69}. In vector representation, each entry $\veg(i)$ of a circuit $\veg$ corresponds to an edge of the graph, with $\veg(i) = 1$ if flow is sent along an edge in the direction of its orientation, $\veg(i) = -1$ if flow is sent along an edge in the opposite direction of its orientation, and $\veg(i) = 0$ if no flow is sent along the edge.

The \emph{circulation problem} is the special case of the network-flow problem where the excess supply or demand at each node is $0$. Throughout, we consider a circulation polytope where $\veu = \mathbf{1}$. We call this polytope the \emph{$0/1$-circulation polytope} because all vertices $\vev$ satisfy $\vev \in \{ 0,1 \}^n$. We will show that \textsc{Circ-Dist} is NP-hard even for the $0/1$-circulation polytope. For any circulation polytope, the zero vector $\mathbf{0}$ is a vertex. When $G$ is Eulerian, the flow created by sending a unit across every edge is a vertex of the associated $0/1$-circulation polytope; we call this vertex the \emph{full flow}. To show that \textsc{Circ-Dist} is NP-hard, we will show that there exists a circuit walk of length $2$ from $\veo$ to the full flow if and only the underlying graph can be decomposed into two Hamiltonian cycles.  \autoref{figure:hamiltonian-decomp} provides an illustration for our decomposition used in the proof.

\begin{thm} \label{thm:2stephard}
Let $G$ be an Eulerian digraph where each node has in-degree and out-degree $2$. Let $P$ be the $0/1$-circulation polytope on $G$, let $\mathbf{0} \in P$ be the zero flow, and let $\vex \in P$ be the full flow. The circuit distance from $\mathbf{0}$ to $\vex$ is 2 if and only if $G$ can be decomposed into two simple Hamiltonian dicycles.  
\end{thm}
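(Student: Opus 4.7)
The plan is to characterize the two-step circuit walks from $\mathbf{0}$ to the full flow $\vex$ in terms of edge-decompositions of $G$ into two dicycles, and then promote those dicycles to Hamiltonian ones using the degree hypothesis.

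First I would narrow down the first step of any such walk. By \autoref{prop:circ-char}, a circuit feasible at $\mathbf{0}$ must be a simple dicycle of $G$: any $-1$ entry in a cycle of the underlying undirected graph would push a flow coordinate below $0$. Along any simple dicycle $C_1$ of $G$, the maximal step length from $\mathbf{0}$ is exactly $\lambda_1 = 1$, since any larger $\lambda$ violates the upper bound $\vex(e) \leq 1$ on some edge of $C_1$. Hence, after one maximal step, we arrive at the $0/1$-vertex whose support is $E(C_1)$; call this vector $\chi^{C_1}$.

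Next I would show that the second step is forced. Reachability gives $\lambda_2 \veg_2 = \vex - \chi^{C_1}$, which is a $0/1$ vector supported on $E(G) \setminus E(C_1)$. Because $\veg_2$ has coprime integer components and $\lambda_2 > 0$, this uniquely determines $\lambda_2 = 1$ and $\veg_2 = \vex - \chi^{C_1}$. Invoking \autoref{prop:circ-char} again, $\veg_2$ must be a single simple cycle of the underlying undirected graph of $G$, and since all of its nonzero entries equal $+1$ it is in fact a simple dicycle $C_2$ whose edge set is precisely $E(G) \setminus E(C_1)$. I expect this to be the main technical step: the definition of a circuit as a \emph{single} minimal cycle, rather than a conformal sum of cycles, is what rules out the complement of $C_1$ splitting into several dicycles and is what makes two steps genuinely restrictive.

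The remaining step is to promote $C_1$ and $C_2$ to Hamiltonian dicycles. A simple dicycle uses at most one in-edge and at most one out-edge at each vertex. If some vertex $v$ were missed by $C_1$, then all four arcs at $v$ would lie in $C_2$, contradicting simplicity of $C_2$; symmetrically for $C_2$. Hence both $C_1$ and $C_2$ visit every vertex and partition $E(G)$, so they are two simple Hamiltonian dicycles.

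For the converse, I would take any decomposition $E(G) = E(C_1) \sqcup E(C_2)$ into two simple Hamiltonian dicycles and verify directly that $((\chi^{C_1}, 1), (\chi^{C_2}, 1))$ satisfies all five properties of \autoref{defn:circuit_walk}; maximality of each step follows again from the capacity bound $\vex(e) \leq 1$ on an edge of the respective cycle. I would also observe that the distance cannot be $1$: a single-circuit step from $\mathbf{0}$ to $\vex$ would force $\vex$ to be a circuit, which by \autoref{prop:circ-char} would make the entire underlying undirected graph of $G$ a single simple cycle, contradicting that every vertex has total undirected degree $4$.
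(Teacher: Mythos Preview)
Your proposal is correct and follows essentially the same approach as the paper: both argue that the first maximal step from $\mathbf{0}$ is the indicator of a simple dicycle $C_1$ with $\lambda_1=1$, that the second step must cover the complementary arc set, and that the degree hypothesis then forces both dicycles to be Hamiltonian. The only cosmetic difference is that you pin down $\veg_2$ directly via reachability and the coprime normalization, whereas the paper routes the same conclusion through the residual network $G(\vex')$; your version is if anything slightly cleaner, since it makes explicit why $\veg_2$ cannot use any reversed arc of $C_1$.
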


\begin{proof}
\autoref{prop:circ-char} implies that a circuit can have at most one incoming edge and at most one outgoing edge for each vertex of the graph. Because each vertex has two outgoing edges in the flow, the circuit distance is at least $2$. 

The reverse direction is immediate. Suppose that $G$ can be decomposed into two Hamiltonian dicycles. Each Hamiltonian dicycle defines a circuit of the polytope. Each circuit is feasible and has unit step length. Thus, $\vex$ is the sum of the two circuits.
Therefore, the circuit distance from $\mathbf{0}$ to $\vex$ is $2$.

Now, we assume the circuit distance from $\mathbf{0}$ to $\vex$ is $2$. We will show that $G$ can be decomposed into two simple Hamiltonian dicycles. 
Because each edge of the graph has positive flow for $\vex$ and zero flow for $\mathbf{0}$, it follows that each edge appears in at least one of the circuits. Let $((\veg_1, \lambda_1), \, (\veg_2, \lambda_2))$ be the circuit walk from $\mathbf{0}$ to $\vex$. By \autoref{prop:circ-char}, $\veg_1$ is a simple dicycle in $G$. As every edge has upper capacity $1$ and this is a step from $\mathbf{0},$ it follows that $\lambda_1 = 1$ due to the maximality requirement for the step. Likewise, $\lambda_2 = 1$. We set $\vex' = \mathbf{0} + \veg_1.$ By construction, it follows that $\vex = \vex' + \veg_2$.
Again by \autoref{prop:circ-char}, $\veg_2$ is a simple dicycle of $G(\vex')$, the residual network at $\vex'$. We claim $\veg_2$ is a simple dicycle of $G$. Becasue $\veu(j)=1$ for each $j$ and the step length of $\veg_1$ is $1$, we have that $G(\vex')$ is the graph resulting from $G$ by reversing the orientation of  every edge of $\veg_1$. It follows that no edge in $\veg_1$ can appear in $\veg_2$, because those edges are not in the residual network. Therefore, $\veg_2$ is a simple dicycle consisting of edges that appeared in the original graph $G.$ Thus, $\veg_2$ is a simple dicycle in $G.$

We claim $\veg_1$ and $\veg_2$ are both Hamiltonian dicycles: each vertex of $G$ has 2 incoming edges, and each edge must appear in one of the circuits. It cannot be that both incoming edges appear in the same circuit; otherwise such a circuit would not be a cycle. Therefore, each $\veg_i$ contains an incoming edge for each vertex. Thus, each $\veg_i$ is a Hamiltonian dicycle. 
\end{proof}

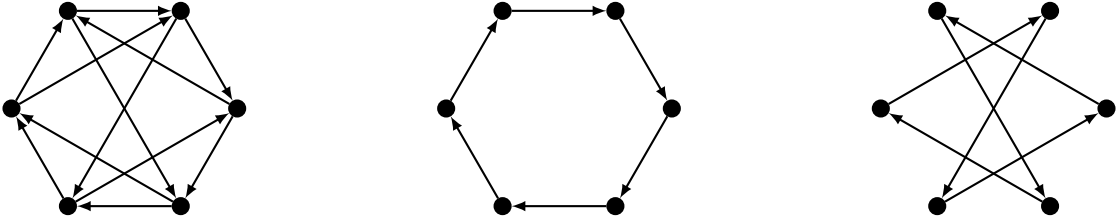
\begin{figure}
    \centering
    \begin{subfigure}[b]{0.3\textwidth}
        \centering
        \begin{tikzpicture}[scale=1.5, inner sep=.75mm, minicirc/.style={circle,draw=black,fill=black,thick}]

            \node (circ1) at ( 60:1) [minicirc] {};
            \node (circ2) at (120:1) [minicirc] {};
            \node (circ3) at (180:1) [minicirc] {};
            \node (circ4) at (240:1) [minicirc] {};
            \node (circ5) at (300:1) [minicirc] {};
            \node (circ6) at (360:1) [minicirc] {};

            \draw[black, thick, latex-] (circ1) -- (circ2);
            \draw[black, thick, latex-] (circ2) -- (circ3);
            \draw[black, thick, latex-] (circ3) -- (circ4);
            \draw[black, thick, latex-] (circ4) -- (circ5);
            \draw[black, thick, latex-] (circ5) -- (circ6);
            \draw[black, thick, latex-] (circ6) -- (circ1);

            \draw[black, thick, latex-] (circ1) -- (circ3);
            \draw[black, thick, latex-] (circ3) -- (circ5);
            \draw[black, thick, latex-] (circ5) -- (circ2);
            \draw[black, thick, latex-] (circ2) -- (circ6);
            \draw[black, thick, latex-] (circ6) -- (circ4);
            \draw[black, thick, latex-] (circ4) -- (circ1);
        \end{tikzpicture}
    \end{subfigure}
    \hfill
    \begin{subfigure}[b]{0.3\textwidth}
        \centering
        \begin{tikzpicture}[scale=1.5, inner sep=.75mm, minicirc/.style={circle,draw=black,fill=black,thick}]
            \node (circ1) at ( 60:1) [minicirc] {};
            \node (circ2) at (120:1) [minicirc] {};
            \node (circ3) at (180:1) [minicirc] {};
            \node (circ4) at (240:1) [minicirc] {};
            \node (circ5) at (300:1) [minicirc] {};
            \node (circ6) at (360:1) [minicirc] {};  

            \draw[black, thick, latex-] (circ1) -- (circ2);
            \draw[black, thick, latex-] (circ2) -- (circ3);
            \draw[black, thick, latex-] (circ3) -- (circ4);
            \draw[black, thick, latex-] (circ4) -- (circ5);
            \draw[black, thick, latex-] (circ5) -- (circ6);
            \draw[black, thick, latex-] (circ6) -- (circ1);
        \end{tikzpicture}
    \end{subfigure}
    \hfill
    \begin{subfigure}[b]{0.3\textwidth}
        \centering
        \begin{tikzpicture}[scale=1.5, inner sep=.75mm, minicirc/.style={circle,draw=black,fill=black,thick}]

            \node (circ1) at ( 60:1) [minicirc] {};
            \node (circ2) at (120:1) [minicirc] {};
            \node (circ3) at (180:1) [minicirc] {};
            \node (circ4) at (240:1) [minicirc] {};
            \node (circ5) at (300:1) [minicirc] {};
            \node (circ6) at (360:1) [minicirc] {}; 

            \draw[black, thick, latex-] (circ1) -- (circ3);
            \draw[black, thick, latex-] (circ3) -- (circ5);
            \draw[black, thick, latex-] (circ5) -- (circ2);
            \draw[black, thick, latex-] (circ2) -- (circ6);
            \draw[black, thick, latex-] (circ6) -- (circ4);
            \draw[black, thick, latex-] (circ4) -- (circ1);
        \end{tikzpicture}
    \end{subfigure}
    
    \caption{Eulerian Digraph (left) decomposed into two edge-disjoint, Hamiltonian cycles (center and right). Each cycle corresponds to a circuit.}
    \label{figure:hamiltonian-decomp}
\end{figure}

It follows that if we can efficiently determine the circuit distance for every pair of vertices of a circulation problem, then we can efficiently determine if the graph can be decomposed into two Hamiltonian cycles.  

\begin{cor} \label{cor:2step-nphard}
The problem \textsc{Circ-Dist} is at least as hard as determining if an Eulerian digraph with in-degree and out-degree $2$ can be decomposed into two Hamiltonian dicycles.
\end{cor}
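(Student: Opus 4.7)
The plan is to give a straightforward polynomial-time many-one reduction from the problem \textsc{2-Hamiltonian-Path} on Eulerian digraphs with in-degree and out-degree $2$ (shown NP-complete in \cite{p-84}) to \textsc{Circ-Dist}, leveraging \autoref{thm:2stephard} as the heart of the correspondence. Given an instance $G$ of \textsc{2-Hamiltonian-Path}, I would first build the $0/1$-circulation polytope $P$ on $G$: this amounts to writing down the node-arc incidence matrix $A_G$ (as the equality constraints $A_G\vex=\mathbf{0}$) together with the box constraints $\mathbf{0}\leq\vex\leq\mathbf{1}$, which is clearly polynomial in the size of $G$.

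Next, I would specify the two vertices of $P$ needed for the \textsc{Circ-Dist} query: the zero flow $\mathbf{0}$ (trivially a vertex of $P$) and the full flow $\vex=\mathbf{1}$, which is a vertex because $G$ is Eulerian with every arc saturated. I would then call the \textsc{Circ-Dist} oracle on the tuple $(P,\mathbf{0},\vex,k=2)$.

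The correctness is immediate from \autoref{thm:2stephard}. Observe first that the proof of that theorem establishes unconditionally that the circuit distance from $\mathbf{0}$ to $\vex$ is at least $2$ (no single circuit saturates both outgoing arcs at any node). Therefore, the \textsc{Circ-Dist} oracle returns \emph{yes} for $k=2$ precisely when the distance equals $2$, which by \autoref{thm:2stephard} is equivalent to $G$ admitting a decomposition into two simple Hamiltonian dicycles. This gives the desired reduction and hence the hardness claim.

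The only thing to check carefully is that the reduction is genuinely polynomial and that the resulting instance fits the input format of \textsc{Circ-Dist}; both are routine since $A_G$ and the $\mathbf{0}\leq\vex\leq\mathbf{1}$ bounds have encoding size linear in $|V(G)|+|E(G)|$, and the vertices $\mathbf{0}$, $\mathbf{1}$ are explicit $0/1$-vectors. Thus, I do not anticipate a serious obstacle for this corollary: the work has already been done in \autoref{thm:2stephard}, and the corollary merely packages it as a Turing (in fact many-one) reduction.
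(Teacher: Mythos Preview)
Your proposal is correct and follows essentially the same approach as the paper: construct the $0/1$-circulation polytope on $G$, take the zero flow and the full flow as the two vertices, and invoke \autoref{thm:2stephard} to see that \textsc{Circ-Dist} with $k=2$ answers yes if and only if $G$ decomposes into two Hamiltonian dicycles. Your write-up is in fact slightly more careful than the paper's, since you explicitly note that the distance is always at least $2$ and that the reduction is polynomial-size.
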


\begin{proof} 
Let $G$ be an Eulerian digraph with in-degree and out-degree $2$ at each node. We consider the circulation problem on $G$ with lower capacity $0$ and upper capacity $1$ at each node. We let $\mathbf{0}$ be the zero flow and $\vex$ be full flow. By \autoref{thm:2stephard}, $G$ can be decomposed into two Hamiltonian dicycles
if and only if \textsc{Circ-Dist} returns true for this instance with $k=2.$
\end{proof}

The circuit walk $((\veg_1, 1), (\veg_2, 1))$ used in the proof of \autoref{thm:2stephard} is a sign-compatible circuit decomposition of $\vex - \mathbf{0}$. Therefore, \textsc{SCM-Circ-Dist} is also NP-hard.

\begin{cor} \label{cor:scm-2step-hard}
The problem \textsc{SCM-Circ-Dist} is at least as hard as determining if an Eulerian digraph with in-degree and out-degree $2$ can be decomposed into two Hamiltonian dicycles.
\end{cor}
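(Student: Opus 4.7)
The plan is to recycle the reduction from \autoref{thm:2stephard} (and \autoref{cor:2step-nphard}) without any modification, and simply verify that the two-step circuit walk produced in that setting is sign-compatible. Concretely, I would take the same instance: an Eulerian digraph $G$ with in-degree and out-degree $2$ at every node, the $0/1$-circulation polytope $P$ on $G$, the zero flow $\mathbf{0}$, and the full flow $\vex$, and ask whether \textsc{SCM-Circ-Dist} returns true with $k = 2$.

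The forward direction is the easy one. If $G$ decomposes into two Hamiltonian dicycles, these give circuits $\veg_1, \veg_2 \in \{0,1\}^n$ with $\veg_1 + \veg_2 = \vex$. Since both vectors are nonnegative, they are sign-compatible with respect to the capacity matrix $B = \binom{I}{-I}$ (indeed with respect to any $B$), and each step has maximal length $1$ from the current point; thus $((\veg_1,1),(\veg_2,1))$ is a valid sign-compatible circuit walk of length $2$.

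The reverse direction is where the key observation lies, and it is essentially already contained in the proof of \autoref{thm:2stephard}. If any circuit walk $((\veg_1,\lambda_1),(\veg_2,\lambda_2))$ of length $2$ from $\mathbf{0}$ to $\vex$ exists, the argument in that proof shows that $\lambda_1 = \lambda_2 = 1$ and that $\veg_1, \veg_2$ are in fact simple dicycles of the original graph $G$ (not merely of the residual network of $\vex_1 = \veg_1$), so their components lie in $\{0,1\}$. Consequently $\veg_1$ and $\veg_2$ are automatically sign-compatible, and the walk is a sign-compatible circuit decomposition of $\vex - \mathbf{0}$. Hence any length-$2$ circuit walk witness for \textsc{Circ-Dist} on this instance is also a witness for \textsc{SCM-Circ-Dist}.

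Combining the two directions, \textsc{SCM-Circ-Dist} returns true on this instance with $k=2$ if and only if $G$ decomposes into two Hamiltonian dicycles, which is the NP-complete problem \textsc{k-Hamiltonian-Path} with $k=2$. No new obstacle arises beyond what was already handled in \autoref{thm:2stephard}: the only thing to check, and the main substantive step, is that every feasible two-step walk to the full flow uses nonnegative circuits, and that is immediate from the residual-network analysis already in hand.
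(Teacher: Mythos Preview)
Your proposal is correct and follows essentially the same approach as the paper: both recycle the instance from \autoref{thm:2stephard} and observe that the length-$2$ circuit walk there consists of edge-disjoint dicycles of $G$ with components in $\{0,1\}$, hence is automatically a sign-compatible circuit decomposition of $\vex-\mathbf{0}$. The paper's argument is terser (it simply remarks that the circuits are sign-compatible because they correspond to edge-disjoint cycles whose union is the support of $\vex$), but the substance is identical.
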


\begin{proof} 
The circuits used in the proof of \autoref{thm:2stephard} are sign-compatible with each other and the vector $\vex - \mathbf{0}$ because they correspond to edge-disjoint cycles whose union is the support of $\vex$. We note that $\vex$ is a vector of all-ones. The two circuit steps $\veg_1$ and $\veg_2$ correspond to edge-disjoint cycles, and therefore, $\veg_1$ and $\veg_2$ are sign-compatible. Thus, this walk is sign-compatible. 
\end{proof}

Additionally, it is also NP-hard to determine the fewest number of circuits needed for a sign-compatible circuit decomposition.

\begin{cor}\label{cor:scm-decomp-hard}
    The problem \textsc{SC-Decomp-Dist} is at least as hard as determining if an Eulerian digraph with in-degree and out-degree $2$ can be decomposed into two Hamiltonian dicycles.
\end{cor}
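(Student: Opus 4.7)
The plan is to reuse the reduction of \autoref{thm:2stephard}: given an Eulerian digraph $G$ with in- and out-degree $2$ at every node, take $P$ to be the $0/1$-circulation polytope on $G$, and use $\mathbf{0}$ together with the full flow $\vex$ as the two vertices. I will prove that \textsc{SC-Decomp-Dist} answers yes on the instance $(P, \mathbf{0}, \vex, k=2)$ if and only if $G$ decomposes into two simple Hamiltonian dicycles, which then yields NP-hardness in exactly the manner of \autoref{cor:2step-nphard}.

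For the forward direction, two Hamiltonian dicycles give circuits $\veg_1, \veg_2 \in \{0,1\}^n$ whose unit-weighted sum is $\vex - \mathbf{0}$. Since every entry of every $\veg_i$ and of $\vex$ is nonnegative, the decomposition is sign-compatible with respect to $B$ by the same reasoning used in \autoref{cor:scm-2step-hard}.

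The reverse direction is where a new argument is needed, because \textsc{SC-Decomp-Dist} no longer requires the maximality of step lengths that underpins \autoref{thm:2stephard}. My approach is: suppose $((\veg_1,\lambda_1),\ldots,(\veg_r,\lambda_r))$ is a sign-compatible decomposition of $\vex - \mathbf{0}$ with $r \leq 2$. The matrix $B$ encodes the bounds $\mathbf{0} \leq \vex \leq \mathbf{1}$, so sign-compatibility with respect to $B$ reduces to ordinary coordinate-wise orthant agreement. Because $\vex \geq \mathbf{0}$ we get $\veg_i \geq \mathbf{0}$, so by \autoref{prop:circ-char} each $\veg_i \in \{0,1\}^n$ is a simple dicycle of $G$. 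The case $r = 1$ is ruled out because $\vex = \mathbf{1}$ has support on all of $E(G)$, giving each node degree $4$ in the support, whereas a simple dicycle has degree at most $2$ at every node. For $r = 2$, I analyze the equation $\lambda_1\veg_1 + \lambda_2\veg_2 = \mathbf{1}$ on the three regions $E_1 \setminus E_2$, $E_2 \setminus E_1$, and $E_1 \cap E_2$, where $E_i = \operatorname{supp}(\veg_i)$: each edge in a symmetric difference forces the corresponding $\lambda_i = 1$, while each shared edge forces $\lambda_1 + \lambda_2 = 1$, and these constraints can only be simultaneously satisfied when $E_1 \cap E_2 = \emptyset$ and $E_1 \cup E_2 = E(G)$. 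Since each node has out-degree $2$ and a simple dicycle uses at most one outgoing edge per node, each $\veg_i$ must pick up exactly one outgoing edge at every node, so each $\veg_i$ is a Hamiltonian dicycle.

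The main obstacle I anticipate is making sure the overlap analysis in the $r = 2$ case is airtight without the help of step-length maximality: ruling out overlapping-but-unequal supports is the key combinatorial step separating this argument from \autoref{cor:scm-2step-hard}. Once that is in place, the remaining pieces are direct transfers from the proofs of \autoref{thm:2stephard} and \autoref{cor:scm-2step-hard}.
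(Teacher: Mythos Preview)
Your proposal is correct and follows essentially the same reduction as the paper: the same $0/1$-circulation instance $(P,\mathbf{0},\vex,k=2)$, with sign-compatibility forcing $\veg_i\in\{0,1\}^n$ and hence simple dicycles of $G$. The only difference is packaging of the reverse direction. The paper argues that any length-$2$ sign-compatible decomposition must in fact have maximal step lengths (if $\lambda_1<1$ then every entry of $\mathbf{1}-\lambda_1\veg_1$ is positive, so $\veg_2$ would need full support, which is impossible for a simple dicycle), and then invokes \autoref{cor:scm-2step-hard} and \autoref{thm:2stephard}. You instead analyze the equation $\lambda_1\veg_1+\lambda_2\veg_2=\mathbf{1}$ directly on $E_1\setminus E_2$, $E_2\setminus E_1$, $E_1\cap E_2$ to force $\lambda_1=\lambda_2=1$ and disjointness. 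Both routes are short and amount to the same observation; your version is a bit more explicit and self-contained, while the paper's leans on the already-proved corollaries. One small point to tighten: your claim that ``these constraints can only be simultaneously satisfied when $E_1\cap E_2=\emptyset$'' does not by itself exclude $E_1=E_2$ (where only $\lambda_1+\lambda_2=1$ is forced); you should note that $E_1=E_2$ would require $E_1=E(G)$, impossible for a simple dicycle, or that $E_1\subsetneq E_2$ forces $\lambda_1=0$, violating positivity.
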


\begin{proof}
Consider a $0/1$-circulation polytope on an Eulerian digraph with in-degree and out-degree $2$ at each node. Let $\veo$ be the zero flow and $\vex$ be the full flow. By \autoref{cor:scm-2step-hard}, there exists a sign-compatible circuit walk of length $2$ from $\veo$ to $\vex$ if and only if the graph can be decomposed into two Hamiltonian cycles.

We will show if there exists a sign-compatible circuit decomposition of $\vex - \veo$ of length $2$, then that circuit decomposition is a circuit walk. Let $((\veg_1, \lambda_1),(\veg_2, \lambda_2))$ be a sign-compatible circuit decomposition from $\veo$ to $\vex$. Suppose the step lengths are not maximal. If $\lambda_1$ is not maximal, then the circuit $\veg_2$ does not correspond to a cycle. Likewise, if $\lambda_2$ is not maximal, then $\veg_1$ does not correspond to a cycle.
\end{proof}

We have established that the problems \textsc{Circ-Dist} and \textsc{SCM-Circ-Dist} are NP-hard. Next, we will show that if the value of $k$ is polynomial in the size of the input then \textsc{Circ-Dist} and \textsc{SCM-Circ-Dist} are NP-complete.

\subsection{Completeness of Circuit Distance}\label{subsec:cd_complete}

An instance of \textsc{Circ-Dist} is given by the tuple $(P, \vex, \vey, k)$ where $P$ is a polyhedron in general-form, $\vex, \vey \in P$ are vertices, and $k \in \mathbb{N}.$ A certificate for this problem is a circuit walk $W=((\veg_1, \lambda_1), \ldots, (\veg_r, \lambda_r))$, where each $\veg_i$ is a circuit, $\lambda_i$ is the corresponding step length, and $r \leq k$. When the value of $k$ is not polynomial in size of $P$ then the length of $W$ may be exponential in the input size. Because verifying that $W$ is a certificate requires verifying that each circuit step follows a circuit direction, this requires exponentially many checks. 

In the following, we assume the value of $k$ is polynomial in size of $P$.
To prove that \textsc{Circ-Dist} is NP-complete, we show that a yes-solution can be verified in polynomial time. 
First, we give a polynomial-time algorithm to verify if $W$ is indeed a circuit walk from $\vex$ to $\vey$ with at most $k$ steps. Then, we explain, employing standard arguments, why the encoding length of the solution is polynomial in the encoding length of the problem. 

\paragraph{Polynomiality of Verification.}
For a general-form, rational polyhedron $P = \{ \vex : A\vex = \veb, \, B\vex \leq \ved \}$, with $A \in \R^{m\times n}$, and a vector $\veg \in \R^n$, we can determine if $\veg$ is a circuit in polynomial-time. Even using a naive algorithm, this can be verified in $O(mn^3)$ time. 

\begin{lem}
    Let $P = \{ \vex : A\vex=\veb, \, B\vex \leq \ved \}$ be a rational polyhedron, let $\vex, \vey \in P$ be vertices and let $W = ((\veg_1, \lambda_1), \ldots, (\veg_r, \lambda_r))$ be a circuit walk. It can be verified in polynomial time that $W$ is a circuit walk with at most $k$ steps for some $k$ whose value is polynomial in the input size of $P$.
\end{lem}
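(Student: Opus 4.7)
The plan is to verify the five properties of \autoref{defn:circuit_walk} together with $r \leq k$ one at a time using standard linear-algebra routines, and then separately argue that the certificate $W$ admits an encoding of polynomial length so that each of these checks runs in polynomial time in the input size of $P$.

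First I would check $r \leq k$ directly. Next, for each claimed circuit $\veg_i$, I would verify (i) $A\veg_i = \veo$, (ii) $\veg_i \neq \veo$, (iii) that the coordinates of $\veg_i$ are coprime integers, and (iv) that $\veg_i$ is support-minimal in the sense of \autoref{defn:circuits}. For (iv), following the geometric characterization recalled after \autoref{defn:circuits}, it suffices to form the index set $I_i = \{j \in [m] : B_j \veg_i = 0\}$ and check $\rank\binom{A}{B_{I_i}} = n-1$, a polynomial-time computation. Positivity of $\lambda_i$ is a single comparison, and reachability is checked by computing $\vex + \sum_{i=1}^r \lambda_i \veg_i$ and comparing with $\vey$. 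For feasibility I would iteratively build the partial points $\vev_k = \vex + \sum_{i=1}^k \lambda_i \veg_i$ and confirm $A\vev_k = \veb$ (which is automatic, since $A\vex = \veb$ and $A\veg_i = \veo$) together with $B\vev_k \leq \ved$. For maximality, at each step I would compute $\lambda_k^\star = \min\{(\ved(j) - B_j \vev_{k-1})/B_j\veg_k : j \in [m],\, B_j \veg_k > 0\}$ and check $\lambda_k = \lambda_k^\star$. All of these are elementary arithmetic and linear-algebra tasks whose bit complexity is polynomial in the encoding of the input and of the objects they manipulate.

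What remains is to bound the encoding length of the certificate. By standard Cramer-type estimates, any circuit of $P$ spans a one-dimensional subspace of the form $\ker(A) \cap \ker(B')$ for a row-submatrix $B'$ of $B$; its coprime integer representative has coordinates bounded polynomially in the encoding of $\binom{A}{B}$. The maximality rule then writes each $\lambda_i$ as a ratio of numbers built from entries of $\veb,\ved$ and subdeterminants of $\binom{A}{B}$ together with the current position $\vev_{i-1}$. Since $\vex$ is a vertex of $P$ and hence has polynomial encoding length, and since $k$ is assumed polynomial in the input, iterating at most $k$ such updates inflates the encoding length by at most a polynomial factor.

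The main obstacle I anticipate is controlling the bit length of the partial sums $\vev_k$: a priori, denominators could compound as the walk proceeds, and this would threaten polynomiality of the verification. I would address this by showing inductively that each $\vev_k$ is a rational point of $P$ whose common denominator divides a product of at most $k$ subdeterminants of $\binom{A}{B}$, so its bit length is bounded by $k$ times a polynomial in the input size, hence polynomial whenever $k$ is. Once this invariant is in hand, each of the verification steps described above runs in polynomial time, and their concatenation produces a polynomial-time verifier for $W$, completing the proof.
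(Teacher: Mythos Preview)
Your proposal is correct and follows essentially the same approach as the paper: verify each condition of \autoref{defn:circuit_walk} in turn, using the rank test $\rank\binom{A}{B_{I_i}} = n-1$ for circuit membership and the min-ratio test for maximality. The only organizational difference is that you fold the encoding-length analysis (Cramer bounds on circuits, iterated control of the bit length of the partial points $\vev_k$) into this same proof, whereas the paper defers that to the two lemmas immediately following; your explicit concern about denominator compounding is exactly what the paper addresses there via repeated application of its step-length bound.
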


\begin{proof}
    First, we check if the length of $W$ is at most $k$, which can be done in polynomial time. To verify that a vector is a circuit we check that it is in the kernel of $A$ and satisfies support-minimality. To verify support-minimality, we let $B'$ be the row-submatrix of $B$ consisting of all rows $B_i$ such that $B^\T_i\veg = 0$. The vector $\veg$ is a circuit if the rank of  $\binom{A}{B'}$ is $n-1$. We can efficiently check if $\veg$ has coprime integer components by checking if each component is an integer and verifying the entries are coprime via Euclid's algorithm. 

    We verify $W$ is a maximal feasible circuit walk from $\vex$ to $\vey$ by checking each requirement listed in \autoref{defn:circuit_walk}. When the value of $k$ is polynomial in the size of the input, it takes polynomial time to verify that $|W| \leq k$. Verification that requirements $(2)$ and $(3)$ are satisfied is clearly polynomial. Requirement $(1)$ can be verified in polynomial time by iterating over each step and verifying that each $\veg_i$ is a circuit. Verifying membership of a rational point in a general-form polyhedron is efficient. Thus, requirement $(4)$ can be verified in polynomial time by iteratively adding each step $\lambda_i \veg_i$ to the current point $\vex_i$. Finally, requirement $(5)$ can be verified by iterating over each $\veg_i$ and computing the maximal step length. The maximal step length of a circuit direction can be computed efficiently; \autoref{lem:circ-step-size-poly} (below) fully explains this process. Sign-compatibility of the whole walk can be verified in polynomial time by verifying that each $\veg_i$ is sign-compatible with each other $\veg_j$ with respect to $B$; this requires $O(k^2)$ checks.
\end{proof} 

\paragraph{Polynomiality of the Encoding of the Solution.}
To conclude that \textsc{Circ-Dist} is NP-complete, the encoding length of the certificate must be polynomial in the encoding length of the problem. We show that the encoding length of any circuit and any step length is polynomial in the encoding length of polyhedron $P$, which itself is part of the input of \textsc{Circ-Dist}.

\begin{lem} \label{lem:circ-poly-size}
Let $P$ be a rational polyhedron in general-form. The encoding length of a circuit $\veg$ is polynomial in the encoding length of $P$.
\end{lem}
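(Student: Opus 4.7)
The plan is to exploit the geometric characterization of circuits recalled after \autoref{defn:circuits}: every $\veg \in \mathcal{C}(P)$ spans the one-dimensional subspace $\ker M$ for some $M = \binom{A}{B'}$, where $B'$ is a row-submatrix of $B$ and $\rank M = n-1$. Bounding the encoding length of $\veg$ thus reduces to bounding the size of the (unique up to sign) primitive integer generator of $\ker M$, which in turn reduces to a subdeterminant bound on a rational matrix built from rows of $\binom{A}{B}$.

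To exhibit an explicit integer generator, I would pick any $n-1$ linearly independent rows of $M$ to form $\tilde M$, so that $\ker \tilde M = \ker M$; by clearing denominators we may assume $\tilde M$ is integral with encoding length polynomially bounded in that of $P$. Cramer's rule then exhibits $\veg^\star \in \ker \tilde M$ whose $j$-th component is the signed $(n-1)\times(n-1)$ minor $(-1)^j \det\bigl(\tilde M^{(j)}\bigr)$ obtained by deleting column $j$. Hadamard's inequality bounds each $\lvert \det(\tilde M^{(j)}) \rvert$ by the product of the Euclidean norms of the rows of $\tilde M^{(j)}$, yielding a binary encoding length polynomial in that of $M$ --- this is precisely the standard bound used to prove that basic feasible solutions of a rational LP have polynomial encoding length.

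Since $\ker M$ is one-dimensional and $\veg$ is a primitive integer vector in $\ker M$, the integer vector $\veg^\star$ produced above is necessarily an integer multiple of $\veg$, so $\lvert \veg(i) \rvert \le \lvert \veg^\star(i) \rvert$ for every $i$. Hence $\veg$ inherits the polynomial encoding bound from $\veg^\star$. The main obstacle is really just the subdeterminant bound itself, but this is a direct invocation of Hadamard's inequality; everything else --- clearing denominators, selecting a full-rank row-subset to collapse $M$ to $\tilde M$, and comparing $\veg$ to $\veg^\star$ via primitivity --- is routine bookkeeping.
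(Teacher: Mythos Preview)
Your proof is correct and follows essentially the same approach as the paper: both identify $\veg$ as the generator of the one-dimensional kernel of a rank-$(n-1)$ matrix $\binom{A}{B'}$ built from rows of the description of $P$, and then argue that such a generator has polynomial encoding length. The only difference is in presentation---the paper appeals to Gaussian elimination followed by clearing denominators and dividing by the GCD, whereas you give a more explicit construction via signed minors (Cramer's rule), bound those minors with Hadamard's inequality, and observe that the primitive integer generator divides any integer kernel vector entrywise; both are standard routes to the same conclusion.
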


\begin{proof}
Let $P = \{ \vex : A\vex = \veb, \, B\vex \leq \ved \}$ be a rational polyhedron, and suppose that  $B$ has $k$ rows. Every circuit of $P$ can be constructed in the following manner. For $I \subseteq \{ 1, \ldots, k \}$, we define $B_I$ to be the row-submatrix of $B$ with rows indexed by $I.$ The subset $I$ defines a circuit $\veg$ if and only if there exists a nonzero 
$\veg \in \ker \binom{A}{B_I}$
and for any $I' \supsetneq I$, $\ker \binom{A}{B_{I'}}$
is trivial.

Given a set $I$ satisfying these conditions, the circuit $\veg$ can be computed by Gaussian elimination on $\binom{A}{B_I}$. The circuit $\veg$ is a nonzero element of the kernel scaled to coprime integral components. Because the kernel has dimension $1$, the circuit is unique (up to scaling by $-1$). Therefore, the encoding size of a circuit $\veg$ (before normalization) is polynomial in the encoding size of the problem.  It remains to exhibit that converting to coprime integral components retains polynomiality of encoding size. Note that an entry of $\veg$ may be rational but not integral. Scaling $\veg$ by the product of the denominators retains polynomiality of encoding size and returns an integral vector. To convert to a coprime integer representation, we compute the greatest common divisor (GCD) of all entries of $\veg$ and divide $\veg$ by the GCD. This operation also retains polynomiality of encoding size and returns the circuit with coprime integer components.
\end{proof}

Now that we have verified that the size of a circuit $\veg$ is polynomial in the encoding length of $P$, we will verify that the encoding length of each step length $\lambda_i$ is polynomial in the encoding length of $P$ and that the resulting point is polynomial in the encoding lengths of $\vex$ and $P$.\\

\begin{lem} \label{lem:circ-step-size-poly}
Let $P$ be a polyhedron in general-form. The encoding lengths of the maximal step length $\lambda$ of a circuit $\veg$ from $\vex \in P$ and of the resulting $\vex_1=\vex+\lambda \veg \in P$ are polynomial in the encoding lengths of $\vex$ and $P$.
\end{lem}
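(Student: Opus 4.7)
The plan is to give an explicit closed-form expression for the maximal step length $\lambda$ and then invoke standard bounds on the encoding length of rational arithmetic. Write $P = \{\vex : A\vex = \veb, \, B\vex \leq \ved\}$, and suppose $B$ has rows $B_1, \ldots, B_k$. Since $\veg \in \ker(A)$, the equality constraints are preserved along the ray $\vex + \lambda \veg$ for every $\lambda \in \R$. The inequality $B_i^\T(\vex + \lambda \veg) \leq \ved(i)$ imposes no upper bound on $\lambda > 0$ when $B_i^\T \veg \leq 0$, and is equivalent to $\lambda \leq (\ved(i) - B_i^\T \vex)/B_i^\T \veg$ when $B_i^\T \veg > 0$. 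Hence, provided $\veg$ is feasible at $\vex$, the maximal step length is
\[
\lambda = \min\left\{ \frac{\ved(i) - B_i^\T \vex}{B_i^\T \veg} \; : \; i \in [k], \; B_i^\T \veg > 0 \right\},
\]
and if the set on the right is empty then $\veg$ is a recession direction and no maximal step exists (which can be detected in polynomial time and excluded).

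Next, I would argue each candidate ratio has polynomial encoding length in the sizes of $\vex$, $P$, and $\veg$. By \autoref{lem:circ-poly-size}, the encoding length of $\veg$ is polynomial in the encoding length of $P$, so each inner product $B_i^\T \vex$ and $B_i^\T \veg$ is a rational obtained from a bounded number of multiplications and additions of rationals with polynomial-size encodings; by the standard bounds on bit-size growth under rational arithmetic (cf.\ Schrijver's \emph{Theory of Linear and Integer Programming}), these inner products have encoding length polynomial in the inputs. The quotient of two such rationals again has polynomial encoding length. Since $\lambda$ equals one of at most $k$ such quotients, its encoding length is polynomial in the encoding lengths of $\vex$ and $P$.

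Finally, the resulting point $\vex_1 = \vex + \lambda \veg$ is obtained by scaling $\veg$ by $\lambda$ and adding $\vex$ componentwise; each coordinate is therefore a rational built from a constant number of arithmetic operations on rationals of polynomial encoding size, so $\vex_1$ also has encoding length polynomial in the encoding lengths of $\vex$ and $P$.

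There is no real obstacle here: the statement is essentially a bookkeeping exercise for the ratio test that implements a circuit step. The only point worth flagging is to handle the degenerate cases cleanly, namely when no $B_i^\T \veg$ is positive (in which case $\veg$ is a recession direction and the step is not well-defined) and when $\vex$ already lies on the facet $B_i^\T \vex = \ved(i)$ with $B_i^\T \veg > 0$ (in which case the corresponding ratio is zero, contradicting the assumed feasibility of $\veg$ at $\vex$). Both conditions are detectable in polynomial time, and in the well-defined case the argument above gives the required polynomial bound.
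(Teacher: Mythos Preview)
Your proof is correct and follows essentially the same approach as the paper: both compute $\lambda$ via the minimum ratio test $\lambda = \min\{(\ved(i)-B_i^\T\vex)/B_i^\T\veg : B_i^\T\veg > 0\}$, invoke \autoref{lem:circ-poly-size} to bound the size of $\veg$, and conclude polynomiality of $\lambda$ and $\vex+\lambda\veg$ from standard arithmetic bounds. Your version is slightly more explicit about the degenerate cases and the bit-growth argument, but the substance is identical.
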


\begin{proof}
Let $P = \{ \vex : A\vex = \veb, \, B\vex \leq \ved \}$, $\vex \in P$, and $\veg \in \mathcal{C}(P)$ such that $\veg$ is feasible at $\vex$. The maximal step length of $\veg$ can be computed via a minimum ratio test. That is, 
\[
    \lambda = \min \left\{ \frac{\ved(i) - B^\T_i\vex}{B^\T_i\veg} : B^\T_i\veg > 0 \right\}.
\] 
The value of $\lambda$ can be found using arithmetic operations on the input and $\veg$, which by \autoref{lem:circ-poly-size} is polynomial in the size of $P$. It follows that the encoding length of $\lambda$ is polynomial in the encoding length of the problem. Because $\vex$, $\veg$, and $\lambda$ are of polynomial encoding size, so is $\vex + \lambda \veg$.
\end{proof}

By repeated application of \autoref{lem:circ-step-size-poly}, we can verify that all maximum step lengths have polynomial encoding size. Thus, \textsc{Circ-Dist}, \textsc{SC-Decomp-Dist}, and \textsc{SCM-Circ-Dist} are NP-complete when the value of $k$ is polynomial in the encoding size. 

\subsection{Hardness of Geometric Circuit Distance}\label{subsec:min_length_hard}

Recall that our second notion of a short circuit is a circuit walk with small total length. Given a norm $\lVert \cdot \rVert$, we call this problem $\lVert \cdot \rVert$\textsc{-Dist}. 
\begin{prob}[$\lVert \cdot \rVert$\textsc{-Dist}]\label{prob:norm-dist}
    Let $P = \{ \vex : A\vex = \veb, \, B\vex \leq \ved \}$ be a rational polyhedron, let $\vex, \vey \in P$ be vertices, let $\lVert \cdot \rVert$ be a norm on $P$, and let $d \in \R$. Determine if there exists circuit walk $((\veg_1, \lambda_1), \ldots, (\veg_r, \lambda_r))$ from $\vex$ to $\vey$ such that $\sum_{i=1}^r \lVert \lambda_i \veg_i \rVert \leq d$.
\end{prob}

We will show that $\lVert \cdot \rVert$\textsc{-Dist} is NP-complete for the $p$-norms $\lVert \cdot \rVert_p$ with $1 < p \leq \infty$. In \autoref{lem:p-norm}, we consider $p$-norms with finite $p$, and in \autoref{lem:sup-norm}, we consider the sup-norm. Throughout, we consider a $0/1$-circulation polytope on an Eulerian digraph with in-degree and out-degree $2$.

As an aside, we remark that our proof techniques do not immediately transfer to a proof that $\lVert \cdot \rVert_1$-\textsc{Dist} is NP-hard. In fact, for network-flow polyhedra, we can show that if a sign-compatible circuit walk exists, then it is the minimizer of $\lVert \cdot \rVert_1$-\textsc{Dist}. To see this, note that for any circuit walk $(\veg_1, \ldots, \veg_r)$, the $1$-norm distance sums the ``total change'' of each constraint $0 \leq \vex(i) \leq \veu(i)$ over each step of the walk. A sign-compatible circuit walk, if it exists, minimizes this total change.

\begin{lem}\label{lem:p-norm}
    Let $G$ be an Eulerian digraph where each node has in-degree and out-degree $2$. Consider the $0/1$-circulation problem on $G$. Let $\mathbf{0}$ denote the zero flow and $\vex$ denote the full flow. 
    Let $1 < p < \infty$. The solution to $\lVert \cdot \rVert_p$\textsc{-Dist} with input $(P, \mathbf{0}, \vex)$ is $2\sqrt[p]{n}$ if and only if $G$ can be decomposed into two Hamiltonian cycles. 
\end{lem}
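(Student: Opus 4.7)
The plan parallels the argument for \autoref{thm:2stephard}; throughout, take $n=|V(G)|$ so that $|E(G)|=2n$. The ``if'' direction is immediate: if $G=C_1\cup C_2$ with edge-disjoint Hamiltonian dicycles, then by \autoref{prop:circ-char} each $C_i$ produces a circuit $\veg_i$ with exactly $n$ coordinates equal to $+1$ and the rest zero. Exactly as in \autoref{thm:2stephard}, the list $((\veg_1,1),(\veg_2,1))$ is a circuit walk from $\veo$ to $\vex$, and its $p$-norm length is $\lVert\veg_1\rVert_p+\lVert\veg_2\rVert_p=2n^{1/p}$.

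For the converse, I would begin with any circuit walk $W=((\veg_1,\lambda_1),\ldots,(\veg_r,\lambda_r))$ from $\veo$ to $\vex$ and reuse the vertex-walk structure from \autoref{thm:2stephard}: $\veg_1\in\{0,1\}^{|E|}$ is a simple dicycle in $G$ with $\lambda_1=1$, and each later $\veg_i$ is a simple dicycle in the residual network (where all residual capacities equal $1$), which again forces $\lambda_i=1$. Writing $k_i$ for the number of nonzero entries of $\veg_i$, \autoref{prop:circ-char} bounds $k_i\leq n$, and the length to be minimized reduces to $\sum_{i=1}^r k_i^{1/p}$. For each edge $e$, set $a_e=|\{i:\veg_i(e)=+1\}|$ and $b_e=|\{i:\veg_i(e)=-1\}|$. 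The identity $\sum_i\veg_i=\vex=\mathbf{1}$ gives $a_e-b_e=1$ for every $e$, hence
\[
\sum_{i=1}^r k_i \;=\; \sum_e(a_e+b_e) \;=\; |E(G)|+2\sum_e b_e \;\geq\; 2n,
\]
with equality iff no residual edge is ever traversed in reverse.

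The final step is a chord bound from strict concavity of $t\mapsto t^{1/p}$ on $[0,n]$ for $p>1$: since $0^{1/p}=0$, one has $k_i^{1/p}\geq k_i\cdot n^{1/p-1}$, with equality iff $k_i\in\{0,n\}$. Summing,
\[
\sum_{i=1}^r k_i^{1/p} \;\geq\; n^{1/p-1}\sum_{i=1}^r k_i \;\geq\; 2n^{1/p},
\]
and equality in both inequalities forces $r=2$, $k_1=k_2=n$, and $b_e=0$ for every $e$. This says $\veg_1$ and $\veg_2$ are edge-disjoint Hamiltonian dicycles of $G$ covering all of $E(G)$, i.e., $G$ decomposes into two Hamiltonian dicycles, as required. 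The only delicate point I foresee is the verification that every $\lambda_i=1$ and that each $\veg_i$ is a simple dicycle of (the residual of) $G$ with at most $n$ edges; this is precisely the $0/1$ vertex-walk observation already distilled from \autoref{thm:2stephard}, so no new ideas should be needed beyond packaging the chord bound together with the edge-counting identity.
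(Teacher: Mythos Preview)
Your proof is correct and uses essentially the same core inequality as the paper---the concavity (chord) bound $k_i^{1/p}\geq k_i\,n^{1/p-1}$ on $[0,n]$ combined with $\sum_i k_i\geq 2n$---to obtain $\sum_i k_i^{1/p}\geq 2n^{1/p}$. The organization differs slightly: the paper argues the contrapositive (if $G$ has no Hamiltonian decomposition then $r\geq 3$ by \autoref{thm:2stephard}, and then any $r\geq 3$ walk has length strictly above $2n^{1/p}$), whereas you prove the lower bound directly for every walk and then read off the equality case ($k_i=n$, $b_e=0$, hence $r=2$ and two edge-disjoint Hamiltonian dicycles) without re-invoking \autoref{thm:2stephard}. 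Your edge-counting identity $\sum_i k_i=2n+2\sum_e b_e$ makes the $\sum_i k_i\geq 2n$ bound and its equality condition more transparent than the paper's treatment, but the underlying argument is the same.
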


\begin{proof}
    Recall that for a $0/1$-circulation polytope all maximal circuit steps have step length $1$ and each circuit satisfies $\veg \in \{ 0, \pm 1 \}^{2n}$. Suppose  that $G$ can be decomposed into two Hamiltonian cycles. The length of the circuit walk corresponding to the two Hamiltonian cycles with respect to $\lVert \cdot \rVert_p$ is $2\sqrt[p]{n}$. 
    
    Now suppose that $G$ cannot be decomposed into two Hamiltonian cycles. Let $((\veg_1, 1), \ldots, (\veg_r, 1))$ be any circuit walk from $\mathbf{0}$ to $\vex$ with $r \geq 3$. For each $1 \leq i \leq r$, we have $2 \leq \lVert \veg_i \rVert_1 \leq n$ and $\sum_i \lVert \veg_i \rVert_1 \geq 2n$. Interpreting $z_i$ as the number of edges in a cycle, the value $\sum_i \lVert \veg_i \rVert_p$ is bounded below by the solution to the following constrained optimization problem:
    \begin{align*}
        \text{min} \quad & z_1^{1/p} + \cdots + z_{r}^{1/p}  \\
        \text{s.t.} \quad & 2 \leq z_i \leq n \text{ for each } i \in \{1, \ldots, r-1 \} \\
        & \sum_{i=1}^{r} z_i \geq 2n.
    \end{align*}
    In an optimal solution for $r \geq 3$, at least one cycle has fewer than $n$ edges---otherwise the graph can be decomposed into two Hamiltonian cycles. Therefore, for some $i$, $z_i < n$.
    The problem essentially finds the minimum $\lVert \cdot \rVert_p$-length of a circuit walk consisting of $r$ circuits and is relaxed to continuous variables.
    Suppose that $(z_1, \ldots, z_{r})$ is a feasible solution. We claim  
    \[
    z_1^{1/p} + \cdots + z_{r}^{1/p} > 2\sqrt[p]{n}.
    \]
    To see this, consider the following
    \begin{align*}
        \frac{1}{2(n^{1/p})}\left( z_1^{1/p} + \cdots + z_{r}^{1/p} \right) =\, & \frac{1}{2} \left( \left( \frac{z_1}{n} \right)^{1/p} + \cdots + \left( \frac{z_{r}}{n} \right)^{1/p} \right) \\
        >\, & \frac{1}{2} \left(  \frac{z_1}{n}  + \cdots + \frac{z_{r}}{n} \right) \geq \frac{1}{2} \left( \frac{2n}{n} \right) = 1.
    \end{align*}
    The strict inequality follows because for some $i$, $0 < z_i/n < 1$ and for all $i$, $0 < z_i/n \leq 1$.
    If $G$ cannot be decomposed into two Hamiltonian cycles, then any circuit walk from $\mathbf{0}$ to $\vex$ must contain at least $3$ steps. Therefore, the shortest $\lVert \cdot \rVert_p$ length is greater than $2\sqrt[p]{n}$.
\end{proof}

We prove an analogous statement for the sup-norm.

\begin{lem}\label{lem:sup-norm}
    Let $G$ be an Eulerian digraph where each node has in-degree and out-degree $2$. Consider the $0/1$-circulation problem on $G$. Let $\mathbf{0}$ denote the zero flow and $\vex$ denote the full flow. 
    The solution to $\lVert \cdot \rVert_{\infty}$\textsc{-Dist} with input $(P, \mathbf{0}, \vex)$ is $2$ if and only if $G$ can be decomposed into two Hamiltonian cycles. 
\end{lem}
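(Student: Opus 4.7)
The plan is to deduce this from \autoref{thm:2stephard} by showing that for a $0/1$-circulation polytope on an Eulerian digraph with in-degree and out-degree $2$, the sup-norm length of any circuit walk from $\mathbf{0}$ to $\vex$ is exactly equal to its number of steps. Once this is established, the statement that the $\lVert \cdot \rVert_\infty$-distance equals $2$ becomes equivalent to the statement that the circuit distance equals $2$, which is precisely the characterization provided by \autoref{thm:2stephard}.

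The first step is to observe that every circuit $\veg$ of the polytope corresponds to a simple cycle in the underlying undirected graph (by \autoref{prop:circ-char}), and therefore has entries in $\{0,\pm 1\}$. Consequently $\lVert \veg \rVert_\infty = 1$ for every circuit. The second step is to argue that every maximal step length $\lambda_i$ in a circuit walk from $\mathbf{0}$ to $\vex$ equals $1$. This follows from the fact, already noted in the excerpt, that every circuit walk in a $0/1$-network-flow polytope is a vertex walk: each intermediate point is a vertex of the polytope and thus has $\{0,1\}$-entries, so any nonzero coordinate of the step direction forces $\lambda_i = 1$ (since $\veg$ has $\{0,\pm 1\}$-entries and the starting and ending values lie in $\{0,1\}$). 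Combining these two steps, every individual term $\lVert \lambda_i \veg_i \rVert_\infty$ equals $1$, and so the $\lVert \cdot \rVert_\infty$-length of a walk with $r$ steps is exactly $r$.

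With this identity in hand, the forward and reverse implications are immediate. If $G$ decomposes into two simple Hamiltonian dicycles, then \autoref{thm:2stephard} produces a circuit walk from $\mathbf{0}$ to $\vex$ with $r = 2$ steps, hence of sup-norm length $2$. Conversely, if the minimum $\lVert \cdot \rVert_\infty$-length equals $2$, then the minimum number of steps of a circuit walk from $\mathbf{0}$ to $\vex$ is at most $2$. As \autoref{thm:2stephard} also shows this circuit distance is at least $2$, it must equal $2$, and \autoref{thm:2stephard} then yields the desired decomposition of $G$ into two Hamiltonian dicycles.

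There is no substantive obstacle; the only thing requiring care is the justification that each step length must equal $1$, which rests on the $\{0,1\}$-vertex-walk property of $0/1$-network-flow polytopes. Everything else is a direct transfer of the argument for \autoref{lem:p-norm} with $p = \infty$, where the sum $\sum_i \lVert \veg_i \rVert_\infty$ collapses to the step count rather than requiring the Jensen-style inequality used in the $1 < p < \infty$ case.
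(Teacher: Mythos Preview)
Your proposal is correct and follows essentially the same approach as the paper: observe that each circuit has entries in $\{0,\pm 1\}$ and each maximal step length equals $1$, so $\sum_i \lVert \lambda_i \veg_i \rVert_\infty$ equals the number of steps, and then invoke \autoref{thm:2stephard}. The paper's proof is more terse, but the substance is identical.
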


\begin{proof}
    Because each circuit $\veg$ has step length $1$ and $\veg \in \{ 0, \pm 1 \}^{2n}$, $\lVert \veg \rVert_{\infty} = 1$. For a circuit walk $(\veg_1, \ldots, \veg_k)$, we have
    \[
        \sum_{i=1}^k \lVert \veg_i \rVert_{\infty} = k.
    \]
    By \autoref{thm:2stephard}, the minimum of $\sum_{i=1}^k \lVert \veg_i \rVert_{\infty}$ over all circuit walks is $2$ if and only if $G$ can be decomposed into two Hamiltonian cycles.
\end{proof}

Lemmas \ref{lem:p-norm} and \ref{lem:sup-norm} show that in the setting of a $0/1$-circulation polytope on an Eulerian digraph, the minimizer of the geometric distance is a sign-compatible circuit walk. Therefore, the related problems of determining the minimal geometric distance of a sign-compatible circuit walk and sign-compatible circuit decomposition are NP-hard. 

\subsection{Additional Remarks}\label{subsec:addtl_remarks}

Determining whether these problems can be efficiently approximated remains interesting. Given a $0/1$-network flow polytope on an Eulerian digraph $G$, the circuit distance between the zero-flow and full-flow is bounded above by the number of cycles in a minimum edge-disjoint cycle decomposition of $G$; the sign-compatible circuit distance between the zero-flow and full-flow is exactly this value. Further, the length of a sign-compatible circuit walk between the zero-flow and full-flow gives a lower bound on the length of the longest cycle in the graph. That is, if the graph has $n$ edges and there exists a sign-compatible circuit walk with $k$ steps, then the underlying graph has a cycle of length at least $n/k$.

We believe that this observation can lead to a proof that it is hard to efficiently approximate \textsc{Circ-Dist} and \textsc{SCM-Circ-Dist}. The authors of \cite{bhk-04} proved that for every $\e > 0$, there is no deterministic, polynomial-time approximation algorithm for the longest cycle in a Hamiltonian digraph with bounded out-degree on $n$ vertices with performance ratio $n^{1-\e}$, unless P = NP. In contrast, for many graphs with bounded outdegree (such as expander graphs), long cycles can be found efficiently \cite{bhk-04}. To transfer the hardness, one would need to construct a class of general digraphs that contain a Hamiltonian cycle and that can be decomposed into a small number of cycles, but for which there is no deterministic, polynomial-time approximation algorithm for the longest cycle.

\section{Stepping into Particular Facets is Hard }\label{sec:single_step_hard}

In the previous section, we established that \textsc{Circ-Dist}, \textsc{SC-Decomp-Dist}, and \textsc{SCM-Circ-Dist} are NP-hard problems, which follows from reduction to the Hamiltonian cycle problem. On the other hand, for the circulation problem on an Eulerian digraph with unit capacities, a sign-compatible circuit walk between the zero-flow $\veo$ and the full flow $\vex$ can be constructed efficiently. To do so, one follows the standard process for computing a sign-compatible \textit{decomposition}, which in this case is guaranteed to also be a maximal walk.  In particular one computes a cycle in the underlying digraph; this yields a circuit. Then, one deletes this cycle from the graph and finds a new cycle to remove. This process can be iterated until no edges remain. Because the graph is Eulerian, this process is guaranteed to terminate.  However, this walk is not guaranteed to be of the minimum possible length. While a \emph{shortest} sign-compatible circuit walk is hard to compute, a sign-compatible circuit walk is not.

sign-compatible circuit walks are short because each step of the walk enters a new facet incident to the target, and thus, the number of steps is bounded by the dimension. Walks with this property do not always exist, and in this section we prove that it is an NP-complete problem to determine if there exists a circuit that enters a facet incident to the target vertex. For a given polyhedron with vertex $\vev$, we define $\mathcal{F}(\vev)$ to be the union of facets incident to $\vev$. We will prove the following two problems are NP-complete.

\begin{prob}[\textsc{Facet-Step}]\label{prob:facet_step}
Given a rational polyhedron $P = \{ \vex : A\vex = \veb, \, B \vex \leq \ved \}$, a facet $F$ of $P$ defined by the tight inequality $B_i \vex = \ved(i)$, 
and a point $\veu \in P$, compute a circuit $\veg$ such that $\step(\veu, \veg) \in F$ or determine that no such $g$ exists. 
\end{prob}

\begin{prob}[\textsc{Incident-Facet-Step}]\label{prob:incident_facet_step}
Given a rational polyhedron $P = \{ \vex : A\vex = \veb, \, B \vex \leq \ved \}$ a point $
\veu$ of $P$, and a vertex $\vev$ of $P$, compute a circuit $\veg$ such that $\step(\veu, \veg) \in \mathcal{F}(\vev)$ or determine that no such $\veg$ exists. 
\end{prob}

We leave the hardness of determining if a sign-compatible circuit walk exists as an open question. One can ask whether it is even possible to compute the first step of such a walk: 

\begin{prob}[\textsc{SCM-Step}]\label{prob:scm_step}
    Given a rational polyhedron $P = \{ \vex : A\vex = \veb, B\vex \leq \ved \}$ and vertices $\vev, \vew \in P$, compute a circuit $\veg$ such that 
    \begin{enumerate}
        \item $\veg$ and $\vew - \vev$ are sign-compatible with respect to $B$, 
        \item $\step (\vev, \veg) \neq \vev$,
        \item $\veg$ and $\vew - \step(\vev, \veg)$ are sign-compatible with respect to $B$,  
    \end{enumerate}
    or determine that no such $\veg$ exists.
\end{prob}

We call such a circuit $\veg$ a \textit{sign-compatible maximal circuit} (SCM circuit) between $\vev$ and $\vew$, and we call $\step(\vev, \veg)$ a \textit{sign-compatible maximal step} (SCM step) from $\vev$ to $\vew$. The purpose of the third condition is to ensure that $\veg$ can plausibly be the first step of a sign-compatible walk from $\vev$ to $\vew$.  In particular, if $\veg$ and $\vew - \step(\vev,\veg)$ are not sign-compatible, then any circuit walk from $\vev$ to $\vew$ whose first step is $\veg$ cannot be a sign-compatible walk.

\textsc{SCM-Step} is not equivalent to either \textsc{Facet-Step} or \textsc{Incident-Facet-Step}: there exist polyhedra that are yes-instances for \textsc{Incident-Facet-Step} and no-instances for \textsc{SCM-Step} (see \autoref{figure:incident_not_scm}). However, the hardness of \textsc{Facet-Step} and \textsc{Incident-Facet-Step} suggest that an SCM step (and therefore, an entire SCM circuit walk) is hard to compute.

\begin{figure}
    \centering
    \begin{tikzpicture}[scale=.7, inner sep=.75mm, minicirc/.style={circle,draw=black,fill=black,thick}]
        \draw[very thick] (0,0) -- (1,2) -- (9,3) -- (10,0) -- (9,-3) -- (1,-2) -- cycle;

        \draw[blue, fill=blue!15] (0,0) -- (5, 5/8) -- (10,0) -- (5, -5/8) -- cycle;

        \draw[red, very thick, -latex] (0,0) -- (48/5, 6/5);

        \node at (-.35, .2) {$\vex$};
        \node at (10.35, .2) {$\vey$};

        \node (circ1) at (0,0) [minicirc] {};
        \node (circ2) at (10,0) [minicirc] {};
    \end{tikzpicture}
    \caption{Polytope with a yes-solution to \textsc{Incident-Facet-Step} and a no-solution to \textsc{SCM-Step}. The red vector indicates a circuit step at $\vex$ which intersects a facet incident to $\vey$.}
    \label{figure:incident_not_scm}
\end{figure}
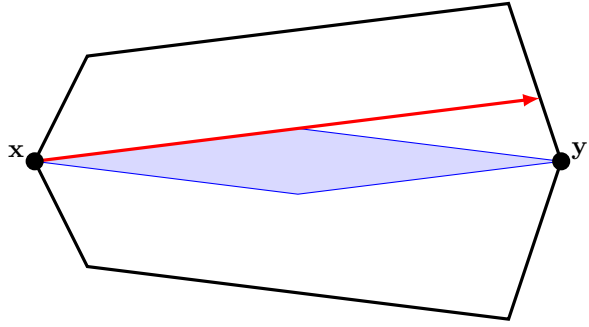

We begin by showing that \textsc{Facet-Step} is at least as hard as \textsc{Face-Step}, which is a known NP-hard problem \cite{dks-22}: 

\begin{prob}[\textsc{Face-Step}]\label{prob:face_step}    
Given a rational polyhedron $P = \{ \vex : A \vex = \veb, B\vex \leq \ved \}$, a face $G$ of $P$ defined by the tight inequalities $B_I \vex = \ved(I)$, and a point $\veu \in P$, compute a circuit $\veg$ such that there exists $\step(\veu, \veg) \in G$, or determine that no such $\veg$ exists. 
\end{prob}

To do so, we follow the strategy of De Loera, Kafer, and Sanit\`a in \cite{dks-22}.  In this work, \textsc{Face-Step} is proven to be NP-hard by showing that it is equivalent to finding a so-called \textit{greatest descent circuit} of a fractional-matching polytope on a bipartite graph.  Our strategy will modify their construction, and so we first introduce the necessary background.

\begin{defn}[Bipartite Matching Polytope]\label{defn:bipart_match}
    Let $G = (V, E)$ be a bipartite graph. The bipartite matching polytope on $G$ is as follows:
    \[
        \left\{ \vex \in \R^{|E|} : \sum_{ i : ij \in E} \vex(ij) \leq 1 \text{ for } j \in V, \, \vex(ij) \geq 0 \right\}.
    \]
\end{defn}

Given a graph $G = (V,E)$ and a matching $M$ on $G$, the \textit{characteristic vector $\vex_M$ of M} is the vector $\vex_M \in \{0, 1\}^{|E|}$ with entry $\vex_M(e) = 1$ if edge $e$ is included in the matching and $\vex_M(e) = 0$ otherwise. The matching polytope is the convex hull of the collection of all $\vex_M$.  If $G$ is not bipartite, then the so-called ``odd-set inequalities'' need to be included to obtain a description of the convex hull of all characteristic vectors of matchings of $G$. When $G$ is bipartite, the absence of odd cycles makes these constraints redundant. We note that the description in \autoref{defn:bipart_match} for the matching polytope of a bipartite graph has a constraint matrix that is totally-unimodular (TU).

\subsection{Hardness of \textsc{Facet-Step}}\label{subsec:facet-step}

We say that a polyhedron $P$ has the \emph{vertex walk property} if, for each vertex $\vev \in P$ and for each circuit $\veg$, $\step(\vev,\veg)$ is a vertex of $P$. Every $0/1$-TU polytope has the vertex circuit walk property \cite{bv-17}, and therefore, the bipartite matching polytope has this property. 

The circuits of the bipartite matching polytope are well-understood and are precisely its edge directions: a vector $\veg$ is a circuit or edge direction of the bipartite matching polytope if and only if there exists a path or cycle in $G$ with edge set $Q$ such that $\veg \in \set{\pm1,0}^E$, $\veg(e)= 0$ if and only if $e\notin Q$, and $\veg(\delta(v)) = 0$ if $\vev$ is incident to two edges of $Q$.  That is, $\veg$ alternates having values 1 and $-1$ as we traverse $Q$.

We now describe the construction that will allow us to reduce \textsc{Facet-Step} to the problem of finding a directed Hamiltonian path in a directed graph.
Let $D=(N,A)$ be a directed graph which contains two distinguished nodes $s, t \in N$ such that there exists a Hamiltonian path from $s$ to $t$. We will associate to $D$ an undirected bipartite graph $H = (V,E)$.  Ultimately, we will show that if we can efficiently solve \textsc{Facet-Step} in the bipartite matching polytope on $H$, then we can efficiently compute a Hamiltonian path from $s$ to $t$ in $D$.

We construct $H$ in the following way: 
For each $v \in N \setminus \{ t \}$, $V$ contains two copies of $v$: $v_a$, $v_b \in V$. For each $v \in N \setminus \{ t \}$, $E$ contains the edge $\{v_a, v_b \}$. For each (directed) edge $(v, u) \in A$, $E$ contains edge $\{v_b, u_a\} \in E$. $V$ also contains vertex $t$ and and $E$ contains the edges $\{u_b, t \}$ for each edge $(u, t) \in A$. Finally, $D$ contains two new vertices $s', t' \in V$ and two new edges $\{s', s_a \}$, $\{t, t' \} \in E$. A visualization of this construction is presented in \autoref{figure:aux_graph}.

\begin{figure}
    \centering
    \begin{tikzpicture}[scale = 1.33]
        \draw[very thick, -latex] (0, 0) -- (0, 1);
        \draw[very thick, -latex] (0, 1) -- (1, 0);
        \draw[very thick, -latex] (0, 0) -- (1, 1);
        \draw[very thick, -latex] (1, 0) -- (0, 0);
        \draw[very thick, -latex] (1, 0) -- (1, 1);

        \draw[fill=black] (0, 0) circle (2pt);
        \draw[fill=black] (0, 1) circle (2pt);
        \draw[fill=black] (1, 0) circle (2pt);
        \draw[fill=black] (1, 1) circle (2pt);

        \node at (-.2, -.2) {$s$};
        \node at (-.2, 1.2) {$x$};
        \node at (1.2, 1.2) {$t$};
        \node at (1.2, -.2) {$y$};

        \draw[very thick] (4, 0) -- (4, 1);
        \draw[very thick] (5, 0) -- (5, 1);
        \draw[very thick] (6, 0) -- (6, 1);
        \draw[very thick] (3, 1.5) -- (4, 1);
        \draw[very thick] (4, 0) -- (5, 1);
        \draw[very thick] (4, 0) -- (7, 1);
        \draw[very thick] (5, 0) -- (6, 1);
        \draw[very thick] (6, 0) -- (7, 1);
        \draw[very thick] (7, 1) -- (8, 1.5);

        \draw[fill=black] (3, 1.5) circle (2pt);
        \draw[fill=black] (4, 1) circle (2pt);
        \draw[fill=black] (4, 0) circle (2pt);
        \draw[fill=black] (5, 1) circle (2pt);
        \draw[fill=black] (5, 0) circle (2pt);
        \draw[fill=black] (6, 1) circle (2pt);
        \draw[fill=black] (6, 0) circle (2pt);
        \draw[fill=black] (7, 1) circle (2pt);
        \draw[fill=black] (8, 1.5) circle (2pt);

        \node at (2.8, 1.7) {$s'$};
        \node at (4.2, 1.2) {$s_a$};
        \node at (5.2, 1.2) {$x_a$};
        \node at (6.2, 1.2) {$y_a$};
        \node at (7.18, 1.3) {$t$};
        \node at (8.2, 1.7) {$t'$};
        \node at (3.8, -.2) {$s_b$};
        \node at (4.8, -.2) {$x_b$};
        \node at (5.8, -.2) {$y_b$};
    \end{tikzpicture}
    \caption{Construction of the auxiliary graph $H$ (right) from a digraph $D$ (left) for the proof of \autoref{thm:facet_step_hard}.}
    \label{figure:aux_graph}
\end{figure}
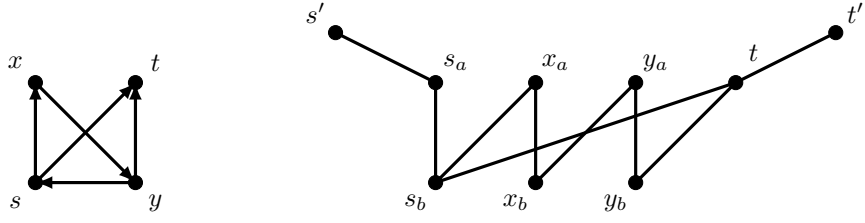

We denote by $P(H)$ the bipartite matching polytope on $H$, and we assume that $P(H)$ has a description of the form provided in \autoref{defn:bipart_match}. Let $n = |N|$ be the number of nodes of $D$, and let $W \gg 2n$. We define the following objective function, $\vecc$, on the edges of $H$: 
\[
    \vecc(e) = \begin{cases}
        \,\,\,\,\,0, & \text{if } e = v_av_b \text{ for } v \in N \setminus \{ t \}; \\
        -W, & \text{if } e = s_as'; \\
        \,\,\,\,\,W, & \text{if } e = tt'; \\
        -1, & \text{otherwise.}
    \end{cases}
\]
Because $D$ contains a Hamiltonian path from $s$ to $t$, it is easy to check that $\omo \coloneqq -W - n + 1 = \min \{ \vecc^\T \vex : \vex \in P(H) \}$ and that if $M$ is any matching in $H$ whose characteristic vector $\vex_M$ satisfies $\vecc^\T\vex_M = \omo$, then $\vex_M$ is a perfect matching in $H$. We are ready to prove that \textsc{Facet-Step} is NP-complete.

\begin{thm} \label{thm:facet_step_hard}
    \textsc{Facet-Step} is NP-complete.
\end{thm}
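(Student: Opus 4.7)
We first argue that \textsc{Facet-Step} is in NP. By \autoref{lem:circ-poly-size}, any circuit $\veg$ has encoding length polynomial in the input, so it is a valid certificate, and by \autoref{lem:circ-step-size-poly} we can verify in polynomial time that $\veg$ is a circuit, compute its maximal step length $\lambda$ at $\veu$, and check whether $B_i^\top(\veu + \lambda\veg) = \ved(i)$.

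For NP-hardness we continue the reduction from the directed Hamiltonian $s$-$t$ path problem via the bipartite graph $H$ and cost vector $\vecc$ constructed above. The first task is to establish the combinatorial correspondence underlying $\omo = -W - n + 1$: a case analysis on matchings of $H$ attaining this cost shows that such matchings project in $D$ to edge sets decomposing into an $s$-$t$ dipath plus disjoint dicycles covering all of $N \setminus \{t\}$, with cycle-free decompositions corresponding exactly to the Hamiltonian $s$-$t$ paths of $D$. The plan is then to convert this cost-optimality condition into the tightness of a single facet-defining inequality of a polyhedron $P$ derived from $P(H)$, and to choose a starting point $\veu$ such that a single-step circuit reaching that facet forces the existence of a Hamiltonian path. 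Specifically, $\veu$ is taken as the (appropriately lifted) characteristic vector of the ``diagonal'' matching $M_0 = \{\{v_a, v_b\} : v \in N \setminus \{t\}\}$ of $H$, which has $\vecc^\top \vex_{M_0} = 0$ and leaves only $s'$, $t$, $t'$ unsaturated; the polyhedron $P$ is obtained from $P(H)$ by introducing an auxiliary slack coordinate $y$ encoding the cost gap $\vecc^\top \vex - \omo$ in such a way that $F = \{y = 0\}$ becomes a codimension-one facet of $P$.

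The main obstacle is twofold. First, the optimal face of $P(H)$ under $\vecc$ is in general not a facet of $P(H)$, so the slack-coordinate lift must be arranged so that $F$ is genuinely a facet of $P$; this will likely require combining the multiple tight inequalities of the optimal face into a single aggregate, or introducing $y$ as a free variable whose lower bound is the sole defining inequality of the facet. Second, one must verify that only Hamiltonian $s$-$t$ paths of $D$ give rise to qualifying circuits from $\veu$. The key argument relies on the connectedness of circuit supports: circuits of $P(H)$ are alternating paths or cycles in $H$, whereas a non-Hamiltonian cost-$\omo$ matching projects to multiple connected components in $D$ (a dipath plus disjoint cycles), which cannot be realized as a single alternating path or cycle. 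Cycles in $H$ cannot contain $s's_a$ (since $s'$ has degree one), so they lack the $-W$ term needed to reach cost $\omo$ for $W$ chosen sufficiently large, and alternating paths from $\vex_{M_0}$ cannot terminate at $t'$ because $tt' \notin M_0$. Together these restrictions force any qualifying circuit to be a single alternating $s'$-$t$ path of $H$, which unfolds into a Hamiltonian $s$-$t$ path in $D$, completing the reduction.
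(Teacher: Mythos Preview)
Your plan has a real gap at the facet-ification step. If you impose $y = \vecc^\top \vex - \omo$ as an equality, the lifted polytope is just an affine copy of $P(H)$ and the face $\{y=0\}$ has the same codimension as the optimal face $G$ in $P(H)$, so it is still not a facet. If instead you relax to an inequality to make $y$ genuinely one-dimensional, both directions fail: with $y \geq \vecc^\top\vex - \omo$ the constraint $y\geq 0$ becomes redundant and $\{y=0\}$ is again just $G$; with $y \leq \vecc^\top\vex - \omo$ the set $\{y=0\}$ is indeed a facet, but now the pure vertical direction $(\veo,1)$ is a circuit of the lifted polytope, and a single maximal step along $(\veo,-1)$ from any $(\veu,y_0)$ lands on $\{y=0\}$ without constraining $\vex$ at all. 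So the slack-variable lift does not isolate the hard instances.

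The paper avoids this by a different device: it adds the single inequality $\vecc^\top\vex \geq \omo + \e$ with $\e = 1/2$ to $P(H)$. Because $\omo + \e$ lies strictly between $\omo$ and the next integer cost value, this hyperplane cuts through the interior of $P(H)$ and therefore defines a genuine facet $F$ of $P(H,\e)$, while the integrality of vertices of $P(H)$ guarantees that any circuit step from $\vev$ reaching $F$ extends in $P(H)$ to a vertex of cost exactly $\omo$. This lets the paper reduce directly from the known NP-hard \textsc{Face-Step} problem as a black box, rather than redoing the Hamiltonian-path analysis from scratch as you attempt. A smaller issue: your justification that ``alternating paths from $\vex_{M_0}$ cannot terminate at $t'$ because $tt'\notin M_0$'' is backwards, since $tt'\notin M_0$ is exactly what allows an augmenting path to end at $t'$; the correct reason such paths fail to reach cost $\omo$ is that including $tt'$ contributes $+W$.
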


\begin{proof}
    We first establish that \textsc{Facet-Step} is NP-hard.
    Let $\vev \in P(H)$ be the vertex corresponding to the matching $\{ v_a, v_b\}$ for each $v \in N\setminus \{t \}$ and $\{t, t' \}$.  Let $G = \set{\vex \in P(H) : \vecc^\T \vex = \omo}$.  Note that because $\omo$ is the minimum value of $\vecc^\T\vex$ over all $\vex \in P(H)$, $G$ is a face of $P(H)$. Given $\e>0$, let $\ome \coloneqq \omo+\e$. We choose $0 < \e < 1$, such that the following are satisfied:
    \begin{enumerate}[1)]
        \item $\set{\vex \in \R^n : \vecc^\T \vex = \ome}$ is a facet of the polytope $P(H,\e) \coloneqq P(H) \cap \{ \vex : \vecc^\T \vex \geq \ome \}$,
        \item if $\vew$ is a vertex of $P(H)$ and $\vecc^\T \vew > \omo$, then $\vecc^\T \vew > \ome$,
        \item the bit encoding length of $\e$ is polynomial in $n$ and $m$.
    \end{enumerate}

    In particular, a choice of $\e = \frac{1}{2}$ suffices.  Condition $1$) is satisfied because the associated hyperplane intersects the interior. Condition 2) is satisfied because $\vecc$ is an integer vector and $\vew$ is an integer vector for all vertices $\vew$ of $P(H)$, so if $\vecc^\T\vew > \omo$, then $\vecc^\T\vew \geq \omo + 1$.
    
    By construction, $\vev \in P(H,\e)$ because $\vev \in P(H)$ and $\vecc^\T \vev = W > \ome$. 
    Further, $\vev$ is a vertex of $P(H,\e)$ because it is a vertex of $P(H)$.
    Let $F$ be the facet of $P(H,\e)$ such that all $\vex \in F$ satisfy $\vecc^\T \vex = \ome$. By \cite{dks-22}, it is NP-hard to construct a circuit $\veg \in \mathcal{C}(P(H))$ such that $\step(\vev, \veg; P(H)) \in G$. We claim that if $\veg \in \mathcal{C}(P(H,\e))$ and $\step(\vev, \veg; P(H,\e)) \in F$, then $\veg \in \mathcal{C}(P(H))$ and $\step(\vev, \veg; P(H)) \in G$.

    We partition the circuits of $P(H,\e)$ into two categories: circuits $\veg$ with $\vecc^\T\veg = 0$ and circuits $\veg$ with $\vecc^\T \veg \neq 0$. If $\veg \in \mathcal{C}(P(H,\e))$ and $\vecc^\T \veg = 0$, then $\step(\vev, \veg; P(H,\e)) \notin F$. It follows that if there exists $\veg$ such that $\step(\vev, \veg; P(H,\e)) \in F$, then $\vecc^\T \veg \neq 0$. This implies that the facet $F$ does not correspond to one of the intersecting hyperplanes defining $\veg$, and therefore $\veg$ is a circuit of $P(H)$. 

    We claim that if $\step(\vev, \veg; P(H,\e)) \in F$, then $\step(\vev, \veg; P(H))\in G$. Recall that all circuit walks in $P(H)$ are vertex circuit walks. Let $\alpha > 0$ such that $\vev+\alpha \veg = \step(\vev, \veg; P(H,\e))$. While $\vev+\alpha \veg \in P(H)$, $\vev+\alpha \veg$ is not a vertex of $P(H)$ because the facet $F$ does not contain any vertices of $P(H)$. Therefore, there exists $\alpha' > \alpha$ such that $\vew := \vev + \alpha' \veg = \step(\vev, \veg; P(H))$ is a vertex of $P(H)$. We also have $\vecc^\T(\vev+ \alpha' \veg) < \vecc^\T(\vev+ \alpha \veg) =  \ome$. However, because $\vew$ is a vertex of $P(H)$ with  $\vecc^\T \vew < \ome$, by construction we have that $\vecc^\T \vew =\omo$. Therefore $\vew \in G$.

    Thus, if we can construct a circuit $\veg$ of $P(H,\e)$ such that $\step(\vev, \veg; P(H,\e)) \in F$, then we can  construct a circuit $\veg$ of $P(H)$ such that $\step(\vev, \veg; P(H)) \in G$. Therefore, \textsc{Facet-Step} is NP-hard. It remains to prove that \textsc{Facet-Step} is NP-complete.

    Given an instance to \textsc{Facet-Step} such that a circuit can be constructed, we can efficiently verify whether a circuit $\veg$ is a certificate or not.
    To do so, we compute $\vew = \step(\vev, \veg)$ by solving the linear program $\max\set{\alpha: \vev+\alpha\veg \in P}$.  Let $A_i\vex = \veb(i)$ be the tight inequality defining the facet $F$.  Then we can easily check whether $A_i\vew = \veb(i)$.  Therefore, \textsc{Facet-Step} is NP-complete.
\end{proof}

\subsection{Hardness of Incident-Facet-Step}\label{subsec:incident-facet}

We will show in \autoref{thm:incident_facet_step} that \textsc{Incident-Facet-Step} is NP-complete by reduction to \textsc{Facet-Step}. In particular, we will further modify the construction in the proof of \autoref{thm:facet_step_hard}.  Note that it does not immediately follow from the hardness of \textsc{Facet-Step} that \textsc{Incident-Facet-Step} is hard:  The ability to compute a circuit step from a point $\veu$ to a point in \textit{some} facet containing $\vev$ does not guarantee the ability to compute a circuit step to a \textit{particular} facet containing $\vev$.  

\begin{thm}\label{thm:incident_facet_step}
    \textsc{Incident-Facet-Step} is NP-complete.
\end{thm}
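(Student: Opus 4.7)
The plan is to prove NP-hardness by reducing from \textsc{Facet-Step}, which is NP-complete by \autoref{thm:facet_step_hard}, and to argue membership in NP by the same polynomial-time verification as there: given a certificate circuit $\veg$, one computes $\step(\vev',\veg;P')$ with a single LP and then checks whether the resulting point lies at equality on some inequality defining $P'$ that is also tight at $\vew$.

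For the hardness reduction, I would start from the hard instance $(P(H,\e), \vev, F)$ of \textsc{Facet-Step} produced in \autoref{thm:facet_step_hard} and carefully augment it into an instance $(P', \vev', \vew)$ of \textsc{Incident-Facet-Step}. The objective of the modification is to introduce a distinguished target vertex $\vew$ of $P'$ whose incident facets are either (a lifted copy of) $F$ itself, or auxiliary facets that are \emph{shadowed} from $\vev'$---that is, any circuit direction from $\vev'$ heading toward such an auxiliary facet must hit some other facet of $P'$ strictly earlier. A natural candidate construction is to pick a point $\vew^* \in F$, intersect $P(H,\e)$ (possibly after adjoining one extra dimension) with a small collection of halfspaces that are simultaneously tight at $\vew^*$ so that $\vew^*$ becomes a vertex of $P'$, and position the new halfspaces so that $\vev'$ lies strictly in their interior while the circuit geometry guarantees that a maximal step from $\vev'$ cannot land on them without first crossing an original facet of $P(H,\e)$.

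Once such a construction is in place, the reduction concludes by showing that any circuit $\veg$ witnessing $\step(\vev',\veg;P') \in \mathcal{F}(\vew)$ must in fact land on the copy of $F$, so that projecting $\veg$ back to the original space yields a solution to \textsc{Facet-Step} on $(P(H,\e), \vev, F)$; conversely, any circuit solving the original instance continues, by design, to solve the modified one. The main obstacle will be engineering the auxiliary halfspaces so that they simultaneously (i) make $\vew$ a genuine vertex with only the lifted $F$ as a \emph{reachable} incident facet from $\vev'$, (ii) do not introduce spurious new circuits that could solve the modified instance without corresponding to a step to $F$ in $P(H,\e)$, and (iii) have encoding length polynomial in the size of the original input, so that the reduction remains polynomial-time.
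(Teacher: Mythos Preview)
Your high-level strategy---reduce from \textsc{Facet-Step} and manufacture a target vertex so that reaching any of its incident facets forces a step into $F$---is the same as the paper's. However, what you have written is a plan with explicitly acknowledged obstacles, not a proof, and the construction you sketch differs from the paper's in a way that makes those obstacles hard to close.

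You propose to place the new vertex $\vew^*$ \emph{on} $F$ and add halfspaces through $\vew^*$ so that the remaining incident facets are ``shadowed'' from $\vev'$. The paper does something rather different: it places the new vertex $\vez$ strictly \emph{below} $F$, in $\intr(P(H)\setminus P(H,\e))$, then \emph{removes} the inequality $\vecc^\T\vex\geq\ome$ defining $F$ and replaces it by a family of tilted inequalities $\veq_i^\T\vex\leq\ver(i)$, each a conic combination of a row $B_i$ of the original constraint matrix and $-\vecc$, chosen so that the hyperplane $\veq_i^\T\vex=\ver(i)$ passes through both $\vez$ and the ridge $G_i=F_i\cap F$. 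In the resulting polytope $P'(H,\e)$, every facet incident to $\vez$ is one of the new ones, and every point of $\mathcal{F}(\vez)$ satisfies $\vecc^\T\vex\leq\ome$. Thus a maximal step into $\mathcal{F}(\vez)$ automatically crosses the level set $\vecc^\T\vex=\ome$, which recovers the original \textsc{Facet-Step} instance.

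The obstacle you label (ii) is exactly where the real work lies, and your proposal does not address it. Adding new halfspaces creates new circuits (directions orthogonal to some $\veq_i$), and one must rule out that such a spurious circuit reaches $\mathcal{F}(\vez)$ from $\veu$. The paper handles this by carefully engineering the numerics: $\vez$ is obtained as $\vey+\alpha\veh$ for a specific perturbation $\veh$ and a step $\alpha$ calibrated via constants $r,\Delta,T$ so that for any circuit $\veg$ with $\veq_i^\T\veg=0$, the equation $\veq_i^\T\veu=\veq_i^\T\vew'$ forces $W-\ome\leq 1$, a contradiction. This quantitative control is the core of the argument, and there is no analogue of it in your sketch. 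Your ``shadowing'' idea would need a comparably concrete mechanism; as written, it is not clear that any choice of halfspaces through a point of $F$ can simultaneously make $\vew^*$ a vertex, leave $\vev$ feasible, and block all spurious circuit steps---and you have not supplied one.
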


\begin{proof}
We first show NP-hardness.
Consider the graph $H=(V,E)$ and the bipartite matching polytope $P(H)$ as in the proof of \autoref{thm:facet_step_hard}, and assume that $H$ contains a matching whose incidence vector $\vey$ satisfies $\vecc^\T \vey =  \omo$.  Note that such a matching can be computed in polynomial time.

Let $P(H,\e)$ denote the modification of $P(H)$ as used in the proof of \autoref{thm:facet_step_hard}.  Our construction requires a point $\vez$ such that
\begin{enumerate}[a)]
    \item $\vecc^\T\vez < \ome$, but very close to $\ome$, and
    \item $\vez \in \intr(P(H) \setminus P(H,\e))$, where $\intr(\cdot)$ denotes the interior of a set.
\end{enumerate}
We will construct $\vez$ via an appropriate perturbation of $\vey$.  To that end, let $r := |E| - n + 1$.  Note that $r$ is the number of edges $e$ of $H$ such that $\vecc(e) = -1$ and $\vey(e) = 0$.  Let $\veh$ be defined by 
\[
    \veh(e) = \begin{cases}
        -1, & \text{if } \vey(e) = 1;\\
        \,\,\,\,\frac{1}{2r}, & \text{if } \vey(e) = 0. 
    \end{cases}
\]

The vector $\veh$ is a feasible direction at $\vey$ in $P(H)$, and for any inequality of $P(H)$ tight at $\vey$, that inequality is not tight at $\vey+\alpha \veh$ for any $\alpha > 0$.
We define $\Delta$ as follows
\[
    \Delta \coloneqq \vecc^\T \veh = W+n-1 -\frac{W}{2r} + \frac{r}{2r} = -\omo -\frac{W}{2r} + \frac{1}{2}.
\]
We set $T = 2r\Delta+1$. Let $\vez = \vey + \alpha \veh$ where $\alpha$ is chosen such that $\vecc^\T \vez = \omo +\frac{T-1}{T}\e$.
That is, $\alpha = \frac{(T-1)\e}{T\Delta}$. We claim that no inequality of $P(H)$ tight at $\vez$. Because $\alpha<1$, $\vez(e) > 0$ for all $e$, no non-negativity constraint of $P(H)$ is tight at $\vez$.  
Consider any constraint of the form $\sum_{i:ij\in E}\vex(ij)\leq 1$ for some $j\in V$.  
As observed above, if this constraint is tight at $\vey$, it is not tight at $\vez$. Further, all constraints of this form are tight because, as noted earlier, $\vecc^\T\vey = \omo$ implies that $\vey$ is the characteristic vector of a perfect matching. Therefore, $\vez\in \intr(P(H))$, and because $\vecc^\T \vez < \ome$, we have $\vez \in \intr(P(H)\setminus P(H,\e))$.
We have introduced a number of constants, and we now provide the reader the table below for later reference:

\begin{center}
\begin{tabular}{ c c c c }
  $W \gg 2n$ & $\omo = -W - n + 1$ & $\ome = -W -n + 1 +\e $& \, \\\\ 
  $r = |E| - n + 1$ & 
  $\Delta = \vecc^\T\veh = -\omo - \frac{W}{2r} + \frac{1}{2}$ & $T = 2r\Delta+1$ & $\alpha = \frac{(T-1)\e}{T\Delta}$ 
\end{tabular}
\end{center}

Next, we will modify $P(H,\e)$ to construct a new polyhedron, $P'(H,\e)$, which will allow us to reduce \textsc{Incident-Facet-Step} to \textsc{Facet-Step}.  We again let $F = \set{ \vex \in P(H,\e) : \vecc^\T \vex = \ome}$.  Recall that $F$ is the facet of $P(H,\e)$ that we use in the proof of~\autoref{thm:facet_step_hard} to show that \textsc{Facet-Step} is NP-hard.  Roughly, the main idea of the following reduction is to carefully replace the facet $F$ with a collection of facets all containing $\vez$ as a vertex.

Let $F_i$ denote the facet of $P(H)$ corresponding to row $i$ of the constraint matrix $B$ of $P(H)$. Without loss of generality (by relabeling, if necessary), let $B_1,\ldots, B_k$ be the rows of $B$ such that the facets $F_1 \cap P(H, \e),\ldots, F_k \cap P(H, \e)$ have non-empty intersection with $F$.
Note that each such $B_i$ can be found by solving a linear program. For $1\leq i\leq k$, let $G_i$ denote $F_i\cap F$. 
Let $\vex_i \in G_i$ and set
\begin{align*}
p(i) = \frac{B_i^\T \vex_i - B_i^\T \vez}    {\vecc^\T \vex_i - \vecc^\T \vez} .
\end{align*}
The value of $p(i)$ is independent of the particular element $\vex_i$: for all $\vex\in G_i$, we have $\vecc^\T \vex = \vecc^\T \vex_i$ and $B_i^\T \vex = B_i^\T \vex_i$.
Let $\veq_i = B_i - p(i)\vecc$.  We have that 
\begin{align*}
\veq_i^\T \vez = B_i^\T \vez - \frac{B_i^\T \vex_i - B_i^\T \vez}{\vecc^\T \vex_i - \vecc^\T \vez} \vecc^\T \vez
&= \frac{(\vecc^\T \vex_i - \vecc^\T \vez)B_i^\T \vez - (B_i^\T \vex_i - B_i^\T \vez)\vecc^\T \vez}   {\vecc^\T\vex_i - \vecc^\T\vez} \\
&= \frac{\vecc^\T\vex_i B_i^\T\vez - \vecc^\T\vez B_i^\T\vez - B_i^\T\vex_i \vecc^\T \vez + B_i^\T \vez \vecc^\T \vez}   {\vecc^\T\vex_i - \vecc^\T \vez} \\
&= \frac{\vecc^\T \vex_i B_i^\T \vez - B_i^\T \vex_i \vecc^\T \vez}    {\vecc^\T \vex_i - \vecc^\T \vez} \\
&= \frac{\vecc^\T \vex_i B_i^\T \vez - B_i^\T \vex_i \vecc^\T \vez + (B_i^\T \vex_i \vecc^\T \vex_i - B_i^\T \vex_i \vecc^\T \vex_i)}    {\vecc^\T \vex_i - \vecc^\T \vez} \\
&= \frac{(\vecc^\T \vex_i - \vecc^\T \vez)B_i^\T \vex_i - (B_i^\T \vex_i - B_i^\T \vez) \vecc^\T \vex_i} {\vecc^\T \vex_i - \vecc^\T \vez} \\
&= B_i^\T \vex_i - \frac{B_i^\T \vex_i - B_i^\T \vez}{\vecc^\T \vex_i - \vecc^\T \vez} \vecc^\T \vex_i = \veq_i^\T \vex_i.
\end{align*}
Let $\ver(i) = \veq_i^\T \vex_i$. Because $\veq_i$ is a conic combination of $B_i$ and $-\vecc$, which are themselves outer-normals to facets containing $G_i$, the inequality $\veq_i^\T \vex \leq \ver(i)$ is a valid inequality for $P(H,\e)$ that is satisfied with equality by the points of $G_i$.
Thus, for all $1\leq i \leq k$, the set $\set{ \vex \in \R^n : \veq_i^\T \vex = \ver(i)}$ is a supporting hyperplane for $G_i$ and contains the point $\vez$. 
Now, consider the polyhedron 
\[
P'(H,\e) \coloneqq P(H) \cap \set{\vex \in \R^n : \veq_i^\T \vex \leq \ver(i)\text{ for all }1\leq i \leq k}.
\]

We will consider the instance of \textsc{Incident-Facet-Step} of finding a circuit $\veg$ of $P'(H,\e)$ such that
$\step(\veu,\veg;P'(H,\e))\in\mathcal{F}(\vez)$ where, as in the proof of \autoref{thm:facet_step_hard}, we let $\veu \in P(H)$ be the vertex given by the matching $\set{t,t'}\cup\set{ v_a, v_b : v\in N\setminus \set{t}}$.

Recall that $\vez\in\intr(P(H))$, so the constraints of $P'(H,\e)$ tight at $\vez$ are precisely those of the form $\veq_i^\T\vex \leq \ver(i)$.
Note that $P'(H,\e)$ is obtained from $P(H,\e)$ by removing the inequality $\vecc^\T \vex\geq \ome$ and adding a set of inequalities that are valid for $P(H,\e)$. Therefore, all $\vex\in P'(H,\e)\setminus P(H,\e)$ satisfy $\vecc^\T \vex < \ome$, and for any such $\vex$, the inequalities tight at $\vex$ have form $\veq_i^\T \vex \leq \ver(i)$. That is, any such point that is also on the boundary of $P'(H,\e)$ is in $\mathcal{F}(\vez)$. Likewise, if $\vex\in P'(H,\e)$ satisfies that $\vecc^\T \vex = \ome$  and that $\vex$ is on the boundary of $P'(H,\e)$, then $\vex \in G_i$ for some $i$ and satisfies $\veq_i^\T \vex = \ver(i)$.  Thus, $\vex$ is in $\mathcal{F}(\vez)$.

We will show that all circuit steps in $P'(H,\e)$ that enter $\mathcal{F}(\vez)$ correspond to circuit steps in $P(H,\e)$ that enter $F$.

\begin{claim}\label{claim:circuit_for_both} A vector $\veg$ is a circuit of $P'(H,\e)$ such that $\step(\veu, \veg; P'(H,\e))\in\mathcal{F}(\vez)$ if and only if $\veg$ is a circuit of $P(H,\e)$ such that $\step(\veu, \veg; P(H,\e))\in F$.
\end{claim}

Suppose that $\veg\in\mathcal{C}(P'(H,\e))$ such that $\vew' \coloneqq \step(\veu, \veg; P'(H,\e)) \in \mathcal{F}(\vez)$.  First, we argue that $\veg$ is a circuit of $P(H,\e)$. 
Suppose for the sake of a contradiction that $\veg \notin \mathcal{C}(P(H,\e))$. Then the circuit $\veg$ satisfies $\veq_i^\T \veg = 0$ for some $1 \leq i \leq k$, and so  $\veq_i^\T \veu = \veq_i^\T \vew'$. That is,
\begin{align*}
 & B_i^\T \veu - p(i) \vecc^\T \veu = B_i^\T \vew' - p(i) \vecc^\T\vew' \\
\Rightarrow\,\,\, & B_i^\T(\veu - \vew') = p(i)\vecc^\T(\veu - \vew') \geq p(i)(W - \ome).
\end{align*}
The above inequality follows from the fact that $\vecc^\T\veu = W$ and, because $\vew'\in\mathcal{F}(\vez)$, $\vecc^\T\vew'\leq \ome$.  We have that 
$$
p(i) = \frac{B_i^\T \vex_i - B_i^\T\vez}{\vecc^\T\vex_i - \vecc^\T\vez} = \frac{B_i^\T(\vex_i - \vez)}{\frac{\e}{T}}.
$$
It follows that
\begin{equation} \label{eq:bound_improvement}
W - \ome \leq \frac{\e}{T}\frac{B_i^\T(\veu - \vew')}{B_i^\T(\vex_i- \vez)} \leq \frac{\e}{T}\frac{1}{B_i^\T(\vex_i- \vez)}.
\end{equation}
The last inequality follows from the fact that $\veu, \vew' \in P(H)$ and for any $\vex\in P(H)$ and any row $B_i$ of the constraint matrix of $P(H)$, $0\leq B_i^\T \vex\leq 1$.  We now bound the minimum size of $B_i^\T(\vex_i - \vez)$.
There are two cases:

\textbf{Case 1:} Suppose $B_i$ corresponds to a non-negativity constraint. That is, $B_i = -\vece_f$ for some edge $f$ of $H$, where $\vece_f$ is an elementary unit vector.  Recall that $\vez = \vey + \alpha \veh$ where $\alpha = \frac{(T-1)\e}{T\Delta}$. It follows that
\[
B_i^\T(\vex_i - \vez) = B_i^\T\vex_i - B_i^\T \left(\vey + \frac{(T-1)\e}{\Delta T} \veh \right) \geq \frac{(T-1)\e}{\Delta T}\frac{1}{2r}
\]
because the constraint corresponding to $B_i$ is tight at $\vex_i$ by definition. Therefore, $B_i^\T\vex_i = 0$.  By~(\ref{eq:bound_improvement}), we have that
\begin{equation*}
    W - \ome \leq \frac{\e}{T} \frac{2r\Delta T}{(T-1)\e} = \frac{2r\Delta}{T-1} = 1.
\end{equation*}
However, it can be readily seen that $W-\ome \gg 1$, a contradiction.

\textbf{Case 2:} Suppose $B_i$ corresponds to a degree constraint.  That is, $B_i = \sum_{u:uv\in E} \vece_{uv}$ for some $v\in V$. Then
\begin{align}
B_i^\T(\vex_i - \vez) &= B_i^\T \vex_i - B_i^\T \vey -B_i^\T\left(\frac{(T-1)\e}{\Delta T} \veh \right) \nonumber\\
&\geq 1 - 1  - \frac{(T-1)\e}{\Delta T}B_i^\T \veh \label{line:simplify} \\
&\geq - \frac{(T-1)\e}{\Delta T}(-1+\frac{r}{2r}) = \frac{(T-1)\e}{2\Delta T}\nonumber,
\end{align}
where~(\ref{line:simplify}) follows from the fact that $B_i^\T \vex_i = 1$ and $B_i^\T \vey \leq 1$.
By~(\ref{eq:bound_improvement}), we have that
\begin{equation}
    W - \ome \leq \frac{\e}{T} \frac{2\Delta T}{(T-1)\e} = \frac{2\Delta}{T-1}< 1,
\end{equation}
a contradiction.

In both cases we reach a contradiction, and therefore, $\veg \in \mathcal{C}(P(H,\e))$.
Now, $\vew'$ satisfies $\vecc^\T \vew'\leq \ome$, and therefore there exists a scalar $\alpha$ such that $\vew \coloneqq \veu + \alpha \veg$ satisfies $\vecc^\T \vew = \ome$.  Thus, $\vew$ is feasible for $P(H,\e)$, $\vew \in F$, and $\vew = \step(\veu, \veg; P(H,\e))$.

Now, suppose that $\veg\in\mathcal{C}(P(H,\e))$ such that $\vew = \step(\veu, \veg; P(H,\e))$ is in $F$. We first argue that $\veg$ is a circuit of $P'(H,\e)$. Suppose to the contrary $\veg \notin \mathcal{C}(P'(H, \e))$.
Then, we have $\vecc^\T \veg = 0$. However, $\vecc^\T \vew \neq \vecc^\T \veu$, which is a contradiction, and therefore, $\veg \in \mathcal{C}(P'(H, \e))$. Let $\vew' = \step(\veu, \veg; P'(H,\e))$, in particular, let $\vew' = \veu + \alpha'\veg$ for some $\alpha' > 0$. Because $\vew \in P(H,\e)\subseteq P'(H,\e)$, it follows that $\alpha' \geq \alpha$, and so $\vecc^\T \vew' \leq \vecc^\T \vew$.  Thus $\vew'\in \mathcal{F}(\vez)$; which proves \autoref{claim:circuit_for_both}.
\vspace{.3cm}

Therefore, if we can solve \textsc{Incident-Facet-Step} efficiently, then we can solve \textsc{Facet-Step} efficiently.
Thus, \textsc{Incident-Facet-Step} is NP-hard.
Now, given an instance of \textsc{Incident-Facet-Step} such that a circuit can be constructed, we can efficiently verify whether a circuit $\veg$ is a certificate or not. To do so, we compute $\vew = \step(\veu, \veg)$ by solving the linear program $\max\set{\alpha: \veu+\alpha \veg\in P}$. It follows that $\vew \in \mathcal{F}(\vev)$ if and only if for at least one inequality $B_i^\T \vex\leq \ved(i)$ of $P$, both $B_i^\T\vev = \ved(i)$ and $B_i^\T\vew = \ved(i)$. This can be easily checked for each inequality. Therefore, \textsc{Incident-Facet-Step} is NP-complete.
\end{proof}

We have shown that \textsc{Incident-Facet-Step} is NP-complete in general. However, the nature of our construction means that the polyhedron in our reduction no longer has many of the nice properties of the bipartite matching polytope from which we started. For example, it is no longer TU.  It would be interesting to determine for which classes of polyhedra \textsc{Incident-Facet-Step} is easy or hard. In the following section, we prove that it \textit{is} indeed easy in the case of TU polyhedra. That is, our approach to proving the hardness of \textsc{Incident-Facet-Step} \textit{required} eliminating the total unimodularity of the polytope we were considering.

\section{Efficient Cases for \textsc{Incident-Facet-Step} and \textsc{SCM-Step}}\label{sec:TU_easy}

Sections \ref{sec:short_walk_hard} and \ref{sec:single_step_hard} demonstrate that the construction of short circuit walks and related subproblems are hard. In general, in this section, we provide some favorable cases. First, we give polynomial-time algorithms for both \textsc{Incident-Facet-Step} and \textsc{SCM-Step} when restricting to TU polyhedra. Both algorithms construct an auxiliary polyhedron and then efficiently determine if a particular type of vertex exists. Next, we show there exist polynomial-time, graph-theoretic algorithms for both \textsc{Incident-Facet-Step} and \textsc{SCM-Step} when restricting to network-flow polytopes. Both algorithms are variants of cycle-finding algorithms on a modified version of the residual network. We conclude by showing that the so-called $(n, d)$-parallelotopes, a generalization of parallelotopes, always have sign-compatible circuit walks for all pairs of vertices. Therefore, they have a small circuit diameter.

\subsection{Efficient Algorithms for TU Polyhedra}\label{subsec:tu_easy}

We show that both \textsc{Incident-Facet-Step} and \textsc{SCM-Step} can be solved in polynomial time and space for TU polyhedra. Recall that for a circuit $\veg$ of a TU polyhedron, $\veg \in \{ 0, \pm 1 \}^n$ (see, e.g., \cite{o-10}). For any polyhedron $P$, the circuits of $P$ can be represented as vertices of an associated polyhedron.

\begin{prop}[Polyhedral Model of Circuits \cite{bv-19c}]\label{prop:polyhedral-model}
Let $P = \{ \vex \in \R^n : A\vex = \veb, \, B\vex \leq \ved \}$ and
\begin{equation} \label{eq:polyhedral_model}
\begin{aligned}
    P_{A, B} = \{(\vex, \vey^+, \vey^-) \in \R^{n+2m_B}:\,
    &A\vex = \mathbf{0}, \\
    &B\vex = \vey^+ - \vey^-, \\
    &\lVert \vey^+ \rVert_1 + \lVert \vey^- \rVert_1 = 1, \\
    &\vey^+,  \, \vey^- \geq \mathbf{0}\}.
\end{aligned}
\end{equation}
For nonzero $\veg \in \R^n$, $\veg \in \mathcal{C}(A, B)$ if and only if there exists $\alpha > 0$ and $\vey^+, \vey^- \in \R^{m_B}$ such that $(\alpha \veg, \vey^+, \vey^-) \in P_{A,B}$ is a vertex.
\end{prop}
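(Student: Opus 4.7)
The plan is to prove both directions of the equivalence by leveraging the fact that the vertex property of $P_{A,B}$ encodes precisely the support-minimality condition that defines circuits of $P$. Throughout, I rely on the standing assumption (implicit in a meaningful circuit definition) that $\binom{A}{B}$ has full column rank, so in particular $B\veg \neq \veo$ for $\veg \in \ker(A)\setminus\{\veo\}$.

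For the forward direction, given a circuit $\veg \in \mathcal{C}(A,B)$, I propose the explicit candidate $\alpha := 1/\|B\veg\|_1$, $\vey^+ := (\alpha B\veg)^+$, $\vey^- := (\alpha B\veg)^-$ (positive/negative parts). Feasibility in $P_{A,B}$ is immediate: $A(\alpha\veg)=\veo$, $B(\alpha\veg)=\vey^+ - \vey^-$, and $\|\vey^+\|_1+\|\vey^-\|_1 = \|\alpha B\veg\|_1 = 1$ because the positive and negative parts have disjoint supports. To show the point is a vertex, suppose it equals $\tfrac{1}{2}(p_1+p_2)$ for $p_k = (\vex_k, \vey_k^+, \vey_k^-) \in P_{A,B}$. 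Disjointness of $\text{supp}(\vey^+)$ and $\text{supp}(\vey^-)$ together with $\vey_k^\pm \geq \veo$ forces $\text{supp}(\vey_k^\pm)\subseteq \text{supp}(\vey^\pm)$, hence $\text{supp}(B\vex_k)\subseteq \text{supp}(B\veg)$. Support-minimality of $B\veg$ in $\{Bx : x \in \ker(A)\setminus\{\veo\}\}$ then implies $\vex_k = t_k\veg$ for scalars $t_k$, and the constraint $\|\vey_k^+\|_1+\|\vey_k^-\|_1=1$ fixes $|t_k|=\alpha$. The midpoint equation $\tfrac{1}{2}(t_1+t_2)=\alpha$ forces $t_1=t_2=\alpha$, so $p_1=p_2$.

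For the reverse direction, let $(\vex^*, \vey^{+*}, \vey^{-*})$ be a vertex of $P_{A,B}$ with $\vex^*\neq \veo$. First, $\vey^{+*}$ and $\vey^{-*}$ must have disjoint supports: at any common-support index $i$, subtracting $\pm\e\, \vece_i$ from both $\vey^{+*}$ and $\vey^{-*}$ simultaneously keeps $\vex$, $B\vex$, and $\|\vey^+\|_1+\|\vey^-\|_1$ unchanged and non-negativity intact, producing a non-trivial segment in $P_{A,B}$ through the vertex, a contradiction. Now suppose for contradiction that $B\vex^*$ is not support-minimal in $\{B\vex:\vex\in\ker(A)\setminus\{\veo\}\}$: pick $\vez\in\ker(A)\setminus\{\veo\}$ with $\text{supp}(B\vez)\subsetneq\text{supp}(B\vex^*)$. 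For sufficiently small $\e > 0$, define $\vew^\pm$ so that $(\e\vez, \vew^+, \vew^-)$ is the unique direction with $\vew^+ - \vew^- = \e B\vez$, disjoint supports inside $\text{supp}(\vey^{\pm *})$, and $\mathbf{1}^\T \vew^+ + \mathbf{1}^\T \vew^- = 0$ (achievable by rescaling one coordinate outside $\text{supp}(B\vez)$ in $\vey^{+*}$ or $\vey^{-*}$; this is where strict inclusion of supports is used). Then $(\vex^*, \vey^{+*}, \vey^{-*})\pm(\e\vez, \vew^+, \vew^-)$ are both in $P_{A,B}$, contradicting vertex-hood. Hence $B\vex^*$ is support-minimal, and writing $\vex^*=\alpha\veg$ with $\veg$ of coprime integer entries gives $\veg\in\mathcal{C}(A,B)$ with $\alpha>0$.

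The main obstacle is the reverse direction: constructing a legitimate perturbation $(\e\vez, \vew^+, \vew^-)$ that stays feasible in $P_{A,B}$, in particular maintaining $\|\vey^+\|_1+\|\vey^-\|_1 = 1$ as a hard equality. The key leverage is that the strict inclusion $\text{supp}(B\vez)\subsetneq \text{supp}(B\vex^*)$ leaves room in at least one coordinate of $\vey^{\pm *}$ to absorb the $\ell_1$-mass required to balance $\mathbf{1}^\T\vew^+ + \mathbf{1}^\T \vew^-$ to zero, while non-negativity remains slack since all $\vey^{\pm *}$-entries in question are positive. Getting this perturbation recipe right—and confirming that both $\pm$ perturbations remain feasible for small $\e$—is the place where care is required; the rest is bookkeeping.
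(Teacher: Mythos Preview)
The paper does not prove this proposition; it is quoted from \cite{bv-19c} and used as a black box. So there is no in-paper argument to compare against, and I will assess your proposal on its own merits.

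Your forward direction is fine. The reverse direction has the right architecture but contains two genuine gaps, both in the perturbation constructions.

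First, your argument that $\vey^{+*}$ and $\vey^{-*}$ have disjoint supports is incorrect as written. Subtracting $\e\,\vece_i$ from both $\vey^{+*}$ and $\vey^{-*}$ preserves $B\vex = \vey^+ - \vey^-$, but it changes $\|\vey^+\|_1 + \|\vey^-\|_1$ by $-2\e$, so the perturbed point is not in $P_{A,B}$. (Indeed, the paper notes that points with $\vey^+(i)=\vey^-(i)=\tfrac12$ and all other coordinates zero \emph{are} vertices; your argument, if valid, would rule them out.) The claim is still true under the hypothesis $\vex^*\neq\veo$, but you need a different direction: for instance, $\bigl(\vex^*,\,\vey^{+*}-\tfrac12\vece_i,\,\vey^{-*}-\tfrac12\vece_i\bigr)$ lies in the lineality space of the active constraints and is nonzero because $\vex^*\neq\veo$, so $\pm$ small multiples of it stay in $P_{A,B}$.

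Second, your perturbation for the support-minimality step does not work as described. With disjoint supports, the requirements $\text{supp}(\vew^\pm)\subseteq\text{supp}(\vey^{\pm*})$ and $\vew^+-\vew^-=\e B\vez$ already pin down $\vew^+$ and $\vew^-$ uniquely; you cannot ``rescale one coordinate $k$ outside $\text{supp}(B\vez)$'' because at such $k$ you would need $\vew^+(k)-\vew^-(k)=0$ while one of the two is forced to vanish by the support constraint, so both must vanish. The clean fix is a dimension count: the space $V=\{\delta\vex\in\ker(A): (B\,\delta\vex)_j=0\text{ for }j\notin S^+\cup S^-\}$ contains both $\vex^*$ and $\vez$, which are linearly independent since $\text{supp}(B\vez)\subsetneq\text{supp}(B\vex^*)$. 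The single linear functional $L(\delta\vex)=\sum_{j\in S^+}(B\,\delta\vex)_j-\sum_{j\in S^-}(B\,\delta\vex)_j$ (which encodes the norm constraint) satisfies $L(\vex^*)=1$, so $V\cap\ker L$ is at least one-dimensional and yields a nonzero feasible direction, contradicting vertex-hood. Equivalently, replace $\vez$ by $\vez - L(\vez)\,\vex^*$; this is the combination you need, not a single-coordinate tweak.
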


In \autoref{prop:polyhedral-model}, the value of $\alpha$ simply scales $\veg$. Given a circuit $\veg \in \mathcal{C}(A, B)$, $\veg$ is scaled to have coprime integral components, whereas in $P_{A,B}$ the circuit $\veg$ is scaled to have unit $1$-norm. $P_{A, B}$ also has an $m_B$ additional vertices that do not correspond to circuits. For each $1 \leq i \leq m_B$, the vector formed by setting $\vey^+(i) = \vey^-(i) = 1/2$ and all other coordinates to zero is a vertex.
We will now describe a polynomial-time algorithm to solve \textsc{Incident-Facet-Step} for a TU polyhedron. Following the description, we will prove correctness and show that the algorithm is polynomial.

\begin{algo}[TU \textsc{Incident-Facet-Step}]\label{algo:TU_facet_step}
Let $P = \{ \vex : A\vex = \veb, \, B\vex \leq \ved \}$ and let $\vev, \vew \in P$ be distinct vertices. Let $m$ be the number of rows of $B$. Recall, $P_{A, B}$ is the polyhedral model of circuits of $P$.  
\\
\\
\noindent For each $j \in \{ 1, \ldots, m \}$ such that $B^\T_j\vev < B^\T_j\vew = \ved(j)$, do the following:
\begin{enumerate}
    \item Set $\kappa = B^\T_j\vew - B^\T_j\vev$.
    \item Set $\vey^-(j) = 0$.
    \item For each $i \in \{1, \ldots, m\}$, if $\ved(i)- B^\T_i \vev < \kappa$, then add the constraint $\vey^+(i) = 0$ to $P_{A, B}$.
    \item Compute a vertex $(\vex,\vey^+, \vey^-)$ on this face with $\vex(j) > 0$, or decide that one does not exist.
    \item If a vertex was found, STOP and return the vertex.
    \item If no such vertex was found, remove all constraints added in Steps 2 and 3.
\end{enumerate}
If no vertex is found for any $j$, then an SCM circuit step does not exist.
\end{algo}

\begin{lem} \label{lem:TU-Alg-Poly}
    \autoref{algo:TU_facet_step} runs in polynomial time and space.
\end{lem}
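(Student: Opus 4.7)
The plan is to bound the complexity by decomposing the algorithm into its outer loop and the inner ``vertex-finding'' subproblem of Step~4, and argue that each part is polynomial in the encoding length of $P$. The outer loop iterates over indices $j$ with $B_j^\T \vev < B_j^\T \vew = \ved(j)$, so it runs at most $m$ times, where $m$ is the number of rows of $B$; this is polynomial in the encoding of $P$. Within an iteration, computing $\kappa$ and adding at most $m+1$ equality constraints of the form $\vey^-(j)=0$ or $\vey^+(i)=0$ is $O(m)$ work. The polyhedral model $P_{A,B}$ in \autoref{eq:polyhedral_model} has $n+2m$ variables and $O(n+m)$ constraints whose coefficients come directly from $A$, $B$, so it is writable in polynomial space from the input.

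The substantive step is Step~4: finding a vertex $(\vex,\vey^+,\vey^-)$ of the modified face with $\vex(j)>0$, or certifying none exists. I would realize this by solving a single linear program,
\[
\max\bigl\{\vex(j)\;:\;(\vex,\vey^+,\vey^-)\in P_{A,B},\ \vey^-(j)=0,\ \vey^+(i)=0\text{ for every }i\text{ added in Step 3}\bigr\}.
\]
This LP is always feasible (the origin augmented with some $\vey^+(i_0)=\vey^-(i_0)=\tfrac12$ for an untouched $i_0$ belongs to it, once one checks the added constraints leave at least one such $i_0$; otherwise infeasibility itself answers ``no''). It is bounded because $P_{A,B}$ itself is bounded by $\lVert\vey^+\rVert_1+\lVert\vey^-\rVert_1=1$. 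Any polynomial-time LP algorithm (ellipsoid or interior-point followed by a basic-feasible-solution conversion) returns an optimal vertex of the face in time polynomial in the input size of $P$. Step~5 then just reads off whether the optimum value is strictly positive.

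For space, the key point is Step~6: after an unsuccessful iteration, the algorithm removes the equalities it introduced, so at any moment the working description consists of $P_{A,B}$ augmented by at most $m+1$ additional equalities, all of polynomial encoding. Thus the total working space is polynomial in the encoding length of $P$. Combining the at most $m$ outer iterations with the $O(m)$ bookkeeping and one polynomial-time LP solve per iteration gives overall polynomial time and space.

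The main subtlety, rather than a genuine obstacle, is ensuring that Step~4 returns an honest \emph{vertex} of the face (so that it corresponds via \autoref{prop:polyhedral-model} to a scaled circuit of $P$) and not merely a feasible point. I would rely on the standard property that an LP solved to optimality by the ellipsoid or simplex method, on a bounded feasible region, can be post-processed in polynomial time to a basic optimal solution; together with the observation that the extra non-circuit vertices of $P_{A,B}$ all have $\vex=\veo$ and are automatically excluded by the requirement $\vex(j)>0$. Correctness of the algorithm (i.e.\ that the returned $\veg$ actually steps into a facet incident to $\vew$) is a separate matter, addressed by the companion correctness lemma; here only the polynomial resource bound is at issue.
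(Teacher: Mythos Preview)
Your proposal is correct and follows essentially the same approach as the paper: bound the outer loop by $m$ iterations, argue that Steps~1--3, 5, 6 are routine $O(m)$ bookkeeping, and realize Step~4 by solving the single linear program $\max\{\vex(j):(\vex,\vey^+,\vey^-)\in P'_{A,B}\}$ over the constrained face, declaring ``no'' when the optimum is~$0$. You add somewhat more detail than the paper on feasibility/boundedness of the LP and on extracting a genuine vertex, but the skeleton of the argument is identical.
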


\begin{proof}
Because $B$ has $m$ rows, the loop has at most $m$ iterations---which is polynomial in the input size of the problem. It remains to show that each iteration of the loop runs in polynomial time. Steps \textit{1}, \textit{2}, \textit{3}, \textit{5} and \textit{6} run in polynomial time because the comparison and algebraic operations are efficient. Let $P_{A, B}'$ denote the polytope created after adding the constraints in Steps \textit{2} and \textit{3}. Step \textit{4} runs in polynomial time because a vertex can be found by solving the following linear program: $\max\{ \vex(j) : (\vex, \vey^+, \vey^-) \in P_{A, B}'\}$. If the maximum of $\vex(j) = 0$, then no such circuit exists.   

The additional space required to account for added constraints, for $\kappa$, and possibly for a found circuit is linear in the size of the encoding. Therefore, the algorithm uses polynomial space.
\end{proof}

To verify correctness, we show that the algorithm decides if there exists a circuit step that enters a facet incident to $\vew$ by iteratively checking each incident facet.

\begin{lem} \label{lem:TU-Alg-Poly_correct}
    \autoref{algo:TU_facet_step} correctly finds a solution to \textsc{Incident-Facet-Step}, if one exists. Otherwise, the algorithm correctly determines that no such solution exists.
\end{lem}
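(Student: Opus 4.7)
The plan is to establish correctness in two directions: \emph{soundness} (every circuit the algorithm returns is a valid solution to \textsc{Incident-Facet-Step}) and \emph{completeness} (if a valid circuit exists, some iteration of the main loop finds one). I interpret the step-4 condition ``$\vex(j) > 0$'' as $\vey^+(j) > 0$, which together with the enforced $\vey^-(j) = 0$ and the defining constraint $B\vex = \vey^+ - \vey^-$ of $P_{A,B}$ is equivalent to the underlying circuit direction satisfying $B_j^\T \veg > 0$, i.e.\ moving into facet $F_j$.

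For soundness, I would fix an iteration $j$ at which step 4 returns a nontrivial vertex $(\vex, \vey^+, \vey^-)$ of the restricted face of $P_{A,B}$. By \autoref{prop:polyhedral-model}, $\vex = \alpha \veg$ for some $\alpha > 0$ and circuit $\veg \in \mathcal{C}(A,B)$ with $B_j^\T \veg > 0$. The TU hypothesis gives $\veg \in \{0, \pm 1\}^n$, and a short Cramer's-rule argument applied to the rank-$(n-1)$ submatrix $\binom{A}{B'}$ whose kernel is spanned by $\veg$ refines this to $B\veg \in \{0, \pm 1\}^{m}$; in particular, $B_j^\T \veg = 1$. Hence the step length to hit $F_j$ from $\vev$ along $\veg$ equals $\kappa = \ved(j)-B_j^\T\vev$. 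For any other $i$ with $B_i^\T \veg > 0$, we have $B_i^\T \veg = 1$, and because step 3 did not add the constraint $\vey^+(i) = 0$, we have $\ved(i) - B_i^\T \vev \geq \kappa$; so the step length to $F_i$ is at least $\kappa$. A minimum-ratio test then yields $\step(\vev, \veg) = \vev + \kappa\veg \in F_j \subseteq \mathcal{F}(\vew)$.

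For completeness, I would fix a witness circuit $\veg$ with $\vew' := \step(\vev, \veg) \in \mathcal{F}(\vew)$. Since $\vev \neq \vew$ are vertices and every vertex is the intersection of its incident facets, the main loop is non-empty. By maximality of the step, $\vew'$ lies on some newly entered facet $F_{j^*}$ with $B_{j^*}^\T \vev < \ved(j^*)$; the key step is to select such a $j^*$ that additionally satisfies $B_{j^*}^\T \vew = \ved(j^*)$, so that $j^*$ appears in the loop. In the generic case where the facet $F_j$ through $\vew'$ witnessing $\vew' \in \mathcal{F}(\vew)$ is not already incident to $\vev$, one can take $j^* = j$. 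With $j^*$ in hand, I would verify that the polyhedral-model encoding of $\veg$ is feasible in the restricted face built in iteration $j^*$: the $\{0, \pm 1\}$ property of $B\veg$ combined with $\lambda_i \geq \lambda^* = \kappa$ implies $\ved(i) - B_i^\T \vev \geq \kappa$ for every $i$ with $\vey^+(i) > 0$, so step 3 does not impose a violated constraint. Because $\vey^+(j^*) > 0$, step 4's LP then certifies a valid circuit.

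The principal obstacle is the non-generic subcase of completeness where $\vev$ and $\vew$ share a facet $F_j$ containing $\vew'$, but the newly entered facet at $\vew'$ is not incident to $\vew$: there the specific witness $\veg$ does not directly correspond to any iteration of the loop. Resolving this requires a structural argument, again leveraging total unimodularity, that within the shared face one can reroute the witness to a circuit whose maximal step instead enters a facet incident to $\vew$; equivalently, one reduces to the generic subcase by working inside the minimal face containing both $\vev$ and $\vew$. Apart from this, the main substantive ingredient throughout is the $B\veg \in \{0, \pm 1\}^{m}$ strengthening of the TU property, which is exactly the ``crucial use'' of total unimodularity the paper flags.
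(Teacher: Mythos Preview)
Your approach matches the paper's: argue that each iteration $j$ correctly decides whether some circuit step from $\vev$ lands on the particular facet $F_j$. Your reading of Step~4's condition as $\vey^+(j)>0$ (equivalently $B_j^\T\veg>0$) is consistent with how the paper's own proof uses it. On soundness you are in fact more precise than the paper: you isolate $B\veg\in\{0,\pm1\}^m$ (via Cramer's rule on the augmented TU matrix) as the property actually needed to force $B_j^\T\veg=1$ and to control every slack entering the minimum-ratio test, whereas the paper only records $\veg\in\{0,\pm1\}^n$ and then uses $B_j^\T\veg=1$ without bridging the two.

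The ``non-generic'' completeness case you flag --- where the witness $\veg$ has $\step(\vev,\veg)\in\mathcal{F}(\vew)$ only via a facet shared by $\vev$ and $\vew$, while the facet certifying maximality of the step is not incident to $\vew$ --- is a genuine subtlety that the paper's proof does not address at all. The paper proves only that each loop iteration correctly solves \textsc{Facet-Step} for its $F_j$ and tacitly equates this with solving \textsc{Incident-Facet-Step}; it never justifies why restricting the loop to facets incident to $\vew$ but not $\vev$ suffices. Your sketched fix (pass to the shared TU facet and recurse, reducing to the generic case inside the minimal face containing $\vev$ and $\vew$) is the right idea and would close a gap that the paper's own argument leaves open.
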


\begin{proof}
    Let $\vev, \vew \in P$ be vertices. If the constraint $B^\T_j\vex \leq \ved(j)$ defines a facet incident to $\vew$ that is not incident to $\vev$, then $B^\T_j\vev < B^\T_j\vew = \ved(j)$. The algorithm iterates over all facets that are incident to $\vew$ that are not incident to $\vev$. We show that, for each $j$, the algorithm determines if there exists a circuit direction that intersects the facet given by $B^\T_j\vex \leq \ved(j)$.

    Let $F = \{ \vex \in P : B^\T_j\vex = \ved(j) \}$ be a facet such that $\vew \in F$ and $\vev \notin F$. Let $\kappa = B^\T_j\vew - B^\T_j\vev$. Recall that because $\veg$ is a circuit of a TU polyhedron, $\veg \in \{ 0, \pm 1 \}^n$. Let $\lambda$ be the maximal step length of circuit direction $\veg$. We have $\vev+\lambda \veg \in F$ if and only if $\lambda = \kappa$ and $B^\T_j\veg = 1$. 

    Step \textit{3} of the algorithm sets $\vey^+(i) = 0$ if $\ved(i) - B_i \vev < \kappa$. Thus, the maximal step length of any found $\veg$ is at least $\kappa$. If a circuit with $\vex(i) = 1$ exists, then Step \textit{4} computes such a circuit. Because $B^\T_j\veg = 1$, the step length is guaranteed to be $\kappa$ and $\vev + \kappa \veg \in F$. If no such circuit exists, then the maximum to the linear program of Step \textit{4} is $0$ and the algorithm determines that there does not exist a circuit direction $\veg$ that intersects $F$.
\end{proof}

Lemmas \ref{lem:TU-Alg-Poly} and \ref{lem:TU-Alg-Poly_correct} together yield the following theorem.

\begin{thm}
    \textsc{Incident-Facet-Step} can be solved in polynomial time for TU polyhedra.
\end{thm}

\begin{proof}
    Consider an instance of \textsc{Incident-Facet-Step}. It is efficient to determine if the underlying polyhedron is TU. Then, by \autoref{lem:TU-Alg-Poly_correct}, \autoref{algo:TU_facet_step} correctly finds a solution to \textsc{Incident-Facet-Step} if one exists. By \autoref{lem:TU-Alg-Poly}, \autoref{algo:TU_facet_step} is efficient.
\end{proof}

\autoref{algo:TU_facet_step} can be easily modified to solve an instance of \textsc{Facet-Step} in polynomial time for a TU polyhedron. We will show that \autoref{algo:TU_facet_step} can also be modified to solve an instance of \textsc{SCM-Step} in polynomial time for a TU polyhedron. First, we note that for all vertices $\vev, \vew \in P$, all circuits that are sign-compatible with $\vew-\vev$ are contained in a face of $P_{A, B}$.

\begin{prop}\label{lem:sc-face}
    Let $P = \{ \vex : A\vex = \veb, \, B\vex \leq \ved \}$, $\vev, \vew \in P$ be vertices, and $\veg \in \mathcal{C}(P)$. If $\veg$ is sign-compatible with $\vew-\vev$ then $\veg$ corresponds to a vertex on the following face of $P_{A, B}$.
    \begin{align*}
        P_{A, B}^{SC} = \{(\vex, \vey^+, \vey^-) \in \R^{n+2m_B}: \, & A\vex = 0, \\
            & B\vex = \vey^+ - \vey^-, \\
            & \lVert \vey^+ \rVert_1 + \lVert \vey^- \rVert_1 = 1, \\
            & \vey^+(j) = 0 \text{ if } B^\T_j\vev \geq B^\T_j\vew, \\
            & \vey^-(j) = 0 \text{ if } B^\T_j\vev \leq B^\T_j\vew, \\
            & \vey^+,  \, \vey^- \geq \mathbf{0} \}.
    \end{align*}
\end{prop}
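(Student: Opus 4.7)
The plan is a direct coordinatewise verification: associate $\veg$ with a vertex of $P_{A,B}$ via Proposition~\ref{prop:polyhedral-model}, show that at every vertex of $P_{A,B}$ the positive and negative parts of $\vey^+ - \vey^-$ automatically separate, and then read off the required vanishings from the definition of sign-compatibility with respect to $B$.

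First, apply Proposition~\ref{prop:polyhedral-model} to obtain a scalar $\alpha>0$ and vectors $\vey^+, \vey^- \in \R^{m_B}_{\geq 0}$ such that $(\alpha\veg, \vey^+, \vey^-)$ is a vertex of $P_{A,B}$. The key technical lemma I would prove is that at any vertex of $P_{A,B}$, and for every coordinate $j$, at most one of $\vey^+(j)$ and $\vey^-(j)$ is strictly positive. The argument is the standard line-segment argument: assuming both are strictly positive at some $j$, the direction $(2\vex,\, 2\vey^+ - \vece_j,\, 2\vey^- - \vece_j)$ preserves $A\vex = \veo$ and $B\vex = \vey^+ - \vey^-$, and also the normalization $\lVert \vey^+ \rVert_1 + \lVert \vey^- \rVert_1 = 1$, because the $+2$ on the $\vex$-side compensates the $-2$ on the $\vey$-side. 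For small $\pm t$ the resulting points are feasible and the supposed vertex sits strictly between them, a contradiction. Consequently, $\vey^+(j) = \alpha \cdot \max\{0,\, B_j^\T \veg\}$ and $\vey^-(j) = \alpha \cdot \max\{0,\, -B_j^\T \veg\}$.

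With this identification, I would unpack the definition of sign-compatibility: for every $j$, $(B_j^\T \veg)\cdot(B_j^\T(\vew-\vev)) \geq 0$. If $B_j^\T \vev \geq B_j^\T \vew$ then $B_j^\T(\vew-\vev) \leq 0$, which forces $B_j^\T \veg \leq 0$ and therefore $\vey^+(j) = 0$. The symmetric argument applied to the opposite inequality yields $\vey^-(j) = 0$. Since the additional equalities cutting out $P_{A,B}^{SC}$ are obtained from $P_{A,B}$ by tightening a subfamily of the inequalities $\vey^\pm(j) \geq 0$ into equalities, $P_{A,B}^{SC}$ is a face of $P_{A,B}$; hence the vertex $(\alpha\veg,\vey^+,\vey^-)$ of $P_{A,B}$ that we just showed lies in this face is itself a vertex of the face.

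The main obstacle I expect is the separation lemma for vertices of $P_{A,B}$: the naive direction $(\veo, -\vece_j, -\vece_j)$ would violate the normalization $\lVert \vey^+\rVert_1 + \lVert \vey^-\rVert_1 = 1$, so the correct ``scaled'' direction described above is the delicate ingredient to get right. Once that ingredient is in place, the rest is a bookkeeping exercise linking the signs of the coordinates of $B\veg$ to those of $B(\vew-\vev)$ via the orthant definition of sign-compatibility.
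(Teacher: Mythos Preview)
Your approach is essentially the paper's: both pass to the explicit representative with $\vey^+(j)=\max\{0,B_j^\T\veg\}$ and $\vey^-(j)=\max\{0,-B_j^\T\veg\}$ (suitably normalized) and then read off the required vanishings from sign-compatibility; you simply make the reason for this identification more explicit via your separation lemma, and you also spell out why $P_{A,B}^{SC}$ is a face and why the point stays a vertex there. One small caveat: your separation lemma is asserted for \emph{every} vertex of $P_{A,B}$, but the paper explicitly notes the spurious vertices with $\vex=\veo$ and $\vey^+(i)=\vey^-(i)=\tfrac12$, and at exactly those your proposed direction $(2\vex,\,2\vey^+-\vece_j,\,2\vey^--\vece_j)$ collapses to zero; so the lemma should be stated only for vertices with $\vex\neq\veo$, which is all you actually use.
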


\begin{proof}
    Let $\vev, \vew \in P$ be vertices and let $\veg \in \mathcal{C}(P)$ be sign-compatible to $\vew - \vev$ with respect to $B$. Consider the vector $\vev_g = (\vex_g, \vey^+_g, \vey^-_g)$ defined as 
    \[
    \vev_g = (\veg(1), \ldots, \veg(n), \max \{ 0, B^\T_1\veg \}, \ldots, \max \{ 0, B^\T_m\veg \}, \max \{ 0, -B^\T_1\veg \}, \ldots, \max \{ 0, -B^\T_m\veg \}). 
    \]
    By sign-compatibility of $\veg$, if $B^\T_j\vev \geq B^\T_j\vew$ then $B^\T_j\veg \leq 0$. Therefore, $\vey_g^+(j) = 0$ when $B^\T_j\vev \geq B^\T_j\vew$. Likewise, if $B^\T_j\vev \leq B^\T_j\vew$, we have $\vey_g^-(j) = 0$.
    Thus, $(\lVert \vey_g^+ \rVert_1 + \lVert \vey_g^- \rVert_1)^{-1}\veg \in P_{A, B}^{SC}$.
\end{proof}

The problem \textsc{SCM-Step} can be solved in polynomial time and space for a TU polyhedron with a modification to \autoref{algo:TU_facet_step}. 

\begin{cor}
    \textsc{SCM-Step} can be solved in polynomial time for a TU polyhedron.
\end{cor}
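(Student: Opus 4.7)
The plan is to prove the corollary by adapting \autoref{algo:TU_facet_step} so that the circuit it returns satisfies all three conditions of \textsc{SCM-Step}, rather than merely entering a facet incident to $\vew$. Two ingredients do most of the work: \autoref{lem:sc-face}, which gives a polyhedral description of all circuits sign-compatible with $\vew-\vev$ via the face $P_{A,B}^{SC}$ of the polyhedral model $P_{A,B}$, and the TU hypothesis, which forces $\veg\in\{0,\pm 1\}^n$ and thereby puts the maximal step length into a form we can encode with linear inequalities on $\vey^{\pm}$.

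First I would replace $P_{A,B}$ in \autoref{algo:TU_facet_step} by $P_{A,B}^{SC}$. By \autoref{lem:sc-face}, any circuit $\veg$ sign-compatible with $\vew-\vev$ corresponds, after normalization, to a vertex of $P_{A,B}^{SC}$, which automatically enforces condition (1) of \textsc{SCM-Step}. Since $P_{A,B}^{SC}$ is obtained from $P_{A,B}$ by adding only equalities of the form $\vey^+(i)=0$ or $\vey^-(i)=0$, its description can be built in polynomial time from $P$ together with $\vev$ and $\vew$.

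Next I would iterate over each row index $j$ with $B_j^\T\vev < B_j^\T\vew = \ved(j)$, setting $\kappa = B_j^\T\vew - B_j^\T\vev$ exactly as in \autoref{algo:TU_facet_step}. For each such $j$, the maximal step $\step(\vev,\veg)$ lies on the hyperplane defining facet $F_j$ precisely when $B_j^\T\veg > 0$, in which case the maximal step length from $\vev$ is $\lambda = \kappa/B_j^\T\veg$. Once condition (1) is satisfied, condition (3) reduces to the coordinate-wise magnitude bound $\lambda\,|B_i^\T\veg| \leq |B_i^\T(\vew-\vev)|$ for every row $i$. Using that $\veg\in\{0,\pm 1\}^n$ for TU polyhedra, together with the fact that $B_i^\T\veg = \vey^+(i)-\vey^-(i)$ up to the fixed normalization of the polyhedral model, this magnitude bound becomes a family of linear inequalities on $\vey^{\pm}$ indexed by $i$. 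I would add these inequalities to the constraint system alongside the restrictions already imposed in the original algorithm, and then, as in Step~4 of \autoref{algo:TU_facet_step}, decide the existence of a vertex with $\vey^+(j) > 0$ by a single linear program. Any such vertex induces a circuit $\veg$ satisfying all three conditions of \textsc{SCM-Step} by construction.

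The main obstacle is verifying that these additional linear constraints capture condition (3) \emph{exactly}, neither rejecting valid SCM circuits nor admitting spurious ones. This rests on a careful bookkeeping argument: one must show that the restriction to vertices of the augmented face of $P_{A,B}^{SC}$ remains in bijection with SCM circuits up to positive scaling, leveraging the $\{0,\pm 1\}$-valued nature of circuits of TU polyhedra to reconcile $B_i^\T\veg$ with $\vey^+(i)-\vey^-(i)$ under the normalization $\lVert \vey^+\rVert_1+\lVert \vey^-\rVert_1 = 1$. Once this correspondence is in hand, polynomial running time follows exactly as in \autoref{lem:TU-Alg-Poly}: the loop contributes $O(m)$ iterations, and each iteration solves a single LP of size polynomial in the input.
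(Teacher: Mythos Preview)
Your proposal is correct and takes essentially the same approach as the paper: both replace $P_{A,B}$ by the sign-compatible face $P_{A,B}^{SC}$ and modify Step~3 of \autoref{algo:TU_facet_step} to encode condition~(3). The paper makes the added constraints explicit---set $\vey^+(i)=\vey^-(i)=0$ whenever $|B_i^\T\vew - B_i^\T\vev| < \kappa$---which is exactly what your magnitude bound $\lambda\,|B_i^\T\veg|\leq |B_i^\T(\vew-\vev)|$ reduces to once one uses $B_i^\T\veg\in\{0,\pm1\}$ and $\lambda=\kappa$.
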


\begin{proof}
    Let $P = \{ \vex : A\vex = \veb, \, B\vex \leq \ved \}$ and let $\vev, \vew \in P$ be vertices.
    Consider the following modifications to \autoref{algo:TU_facet_step}. First, replace $P_{A,B}$ with $P_{A,B}^{SC}$. Therefore, any found circuit $\veg$ is sign-compatible with $\vew-\vev$. Second, replace Step \textit{3} of \autoref{algo:TU_facet_step} with the following step
    {\it
    \begin{itemize}
        \item For each $i \in \{ 1, \ldots, m \}$, if $|B^\T_i\vew - B^\T_i\vev| < \kappa$, set $\vey^-(i) = \vey^+(i) = 0$.
    \end{itemize}}
    This new step ensures that the step length in a sign-compatible circuit decomposition will be at least $\kappa$. Therefore, the updated algorithm will find a sign-compatible maximal step, if one exists.
\end{proof}

We have shown that both \textsc{Incident-Facet-Step} and \textsc{SCM-Step} can be solved efficiently for TU polyhedra. Now, we study a special case of TU polyhedra: network-flow polyhedra.

\paragraph{Network-flow Polyhedra.}\label{par:nf_easy} In applications, network-flow polyhedra are the most common instance of TU polyhedra. For network-flow polyhedra, we show that the problems \textsc{Incident-Facet-Step}, \textsc{Facet-Step}, \textsc{Face-Step}, and \textsc{SCM-Step} all have polynomial-time graph-theoretic algorithms. Such a connection exists because the circuits of the polyhedron have a graph-theoretical interpretation.

Recall the notion of the residual network of a graph (see Section \ref{sec:short_walk_hard}). Given a network-flow polytope $P$ given by the instance $(N, A, \veb, \veu)$, the facets of $P$ are given by the constraints $\mathbf{0} \leq \vex \leq \veu$. An edge $e$ is called \emph{facet-defining} for a vertex $\vex$ if $\vex(e) \in \{ 0, \veu(e) \}$. We consider the following algorithm and claim that it solves \textsc{Incident-Facet-Step} (in polynomial time).

\begin{algo}[NF \textsc{Incident-Facet-Step}]\label{algo:NF_facet_incident}
    Let $(N, A, \veb, \veu)$ be an instance of the network-flow problem and let $G=(N,A)$. We let $\vex, \vey$ be feasible flows on $G$. Let $G(\vex)$ be the residual network with respect to $\vex$.
    Let $M_1 = \{ e \in A : \vex(e) > \vey(e) = 0 \}$ and $M_2 = \{ e \in A : \vex(e) < \vey(e) = \veu(e) \}$. Let $M = M_1 \cup M_2$. 
    \\
    \\
    \noindent For each $e \in M$, do the following:
    \begin{enumerate}
    \item Delete all edges of $G(\vex)$ with capacity less than $|\vey(e) - \vex(e)|$.
    \item Find a cycle containing $e$ or determine no such cycle exists.
    \item If a cycle is found, STOP and return the cycle. 
    \item If no cycle is found, add all edges deleted in Step $1$ back to $G(\vex)$. 
\end{enumerate}
If no cycle is found for any $e \in M$, then there does not exist a circuit $\veg$ such that $\step(\vex,\veg)\in\mathcal{F}(\vey)$.
\end{algo}

Now, we show correctness and efficiency of \autoref{algo:NF_facet_incident}.       

\begin{lem}
    \autoref{algo:NF_facet_incident} correctly computes  a solution to \textsc{Incident-Facet-Step} and runs in polynomial time. 
\end{lem}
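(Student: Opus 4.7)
My plan is to reduce the question of whether a circuit step lands on a facet incident to $\vey$ to a cycle-finding problem in a pruned residual network, and to bound the runtime by the size of the loop and the cost of each cycle search. Recall that for a network-flow polytope the facets are indexed by the capacity constraints $0 \leq \vex(e) \leq \veu(e)$, and that every circuit has entries in $\{0,\pm 1\}$ and corresponds to a simple dicycle in the residual network by \autoref{prop:circ-char}. A facet incident to $\vey$ but not to $\vex$ corresponds exactly to an edge $e \in M_1 \cup M_2 = M$. For any circuit $\veg$ with $\step(\vex,\veg) \in \mathcal{F}(\vey)$, some such edge $e$ must become tight after the step, which forces $\veg(e) = -1$ if $e \in M_1$ and $\veg(e) = +1$ if $e \in M_2$; so the supporting dicycle of $\veg$ in $G(\vex)$ must use the reversed residual copy of $e$ in the first case and the forward residual copy of $e$ in the second.

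For a fixed $e \in M$, I would first observe that the residual capacity of the appropriate directed copy of $e$ equals exactly $|\vey(e) - \vex(e)|$: it is $\vex(e)$ for the reversed edge when $e \in M_1$, and $\veu(e) - \vex(e)$ for the forward edge when $e \in M_2$. The deletion in Step $1$ removes every edge with residual capacity strictly less than $|\vey(e) - \vex(e)|$ but keeps this copy of $e$ itself. Therefore a simple dicycle through $e$ in the pruned graph corresponds to a feasible circuit $\veg$ whose maximal step length, given by the minimum residual capacity along the cycle, equals exactly $|\vey(e) - \vex(e)|$; taking this maximal step thus lands on the facet picked out by $e$, proving soundness. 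Conversely, any valid circuit $\veg$ whose maximal step from $\vex$ saturates the facet at $e$ is realized by a dicycle whose minimum residual capacity equals $|\vey(e) - \vex(e)|$, so all of its edges survive the pruning and the cycle will be returned by Step $2$ in the iteration corresponding to $e$. This gives completeness, and hence correctness.

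For runtime, the outer loop iterates at most $|M| \leq |A|$ times. Building the residual network, scanning capacities, and pruning each take $O(|A|)$ time. Finding a simple dicycle through a prescribed edge $e=(a,b)$ amounts to searching for a directed $b$-to-$a$ path in the pruned residual network, which takes $O(|N|+|A|)$ via BFS or DFS. The total runtime is therefore $O(|A|(|N|+|A|))$, which is polynomial in the input size.

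The main obstacle I expect is the step-length bookkeeping in the middle paragraph: one must simultaneously verify that the deletion is not too aggressive (no cycle realizing a valid facet-step is destroyed) and not too permissive (the bottleneck of any surviving cycle through $e$ is exactly at $e$, so the maximal step ends on the intended facet rather than overshooting or terminating early on some other constraint). Keeping the correspondence between membership in $M_1$ versus $M_2$ and the reversed versus forward residual copy of $e$ aligned with the sign of $\veg(e)$ is what makes the equality of the cycle bottleneck with $|\vey(e) - \vex(e)|$ work out; once this is checked, both correctness and polynomiality follow from \autoref{prop:circ-char} and standard graph search.
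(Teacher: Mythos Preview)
Your proposal is correct and follows essentially the same approach as the paper: both arguments identify $M$ with the facets incident to $\vey$ but not $\vex$, verify that the residual capacity of the relevant copy of $e$ equals $|\vey(e)-\vex(e)|$, and conclude that a cycle through $e$ in the pruned residual network corresponds exactly to a circuit whose maximal step lands on the facet defined by $e$. You supply somewhat more detail than the paper on the completeness direction (why any valid circuit survives the pruning) and give an explicit $O(|A|(|N|+|A|))$ runtime bound rather than the paper's qualitative polynomial-time claim, but the structure and ideas are identical.
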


\begin{proof}
    The circuits of a network-flow polyhedron are the simple cycles in the underlying undirected graph. The feasible circuits at vertex $\vex$ are the simple cycles in the residual network with respect to flow $\vex$.
    Therefore, if \autoref{algo:NF_facet_incident} returns a cycle, that cycle necessarily corresponds to a feasible circuit because it is a simple cycle in the residual network. The set $M_1$ contains all reversed edges that appear in the residual network, such that the original edge is facet defining for $\vey$. The set $M_2$ contains all original edges that have maximum capacity for $\vey$, and thus, are facet-defining. Therefore, the algorithm iterates over all facet-defining edges for $\vey$ that are not facet-defining for $\vex$.

    For each edge $e \in M$, if $e \in M_1$, then the capacity of $e$ in the residual network is $\vex(e) = |0 - \vex(e)| = |\vey(e) - \vex(e)|$. If $e \in M_2$, then the capacity of $e$ in the residual network is $\veu(e) - \vex(e) = |\vey(e) - \vex(e)|$. Therefore, Step \textit{1} ensures that if a circuit $\veg$ is returned, the length of a maximal step in the direction $\veg$ from $\vex$ is $|\vey(e) - \vex(e)|$ and that the resulting step is contained in the facet containing $\vey$ which is defined by $e$. If no cycle is found for any $e \in M$, then there does not exist a circuit $\veg$ such that $\step(\vex,\veg)\in\mathcal{F}(\vey)$

    The initial work and Steps \textit{1}, \textit{3}, and \textit{4} run in polynomial time. For Step \textit{2}, a cycle containing edge $e$ can be found in polynomial time using a depth-first search. Therefore, the entire algorithm runs in polynomial time. Polynomial space follows because only the graph, the residual network, and the computed cycle must be stored.
\end{proof}

Similar to the general TU case, \autoref{algo:NF_facet_incident} can be modified to solve \textsc{Facet-Step} and \textsc{SCM-Step} for network-flow polyhedra in polynomial time.

\begin{cor}
    \textsc{SCM-Step} can be solved in polynomial time for a network-flow polyhedron.
\end{cor}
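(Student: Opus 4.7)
The plan is to modify Algorithm~\ref{algo:NF_facet_incident} so that any simple cycle in a suitably restricted version of the residual network $G(\vex)$ corresponds directly to a sign-compatible maximal circuit between $\vex$ and $\vey$. Unlike Algorithm~\ref{algo:NF_facet_incident}, which iterates over candidate facet-defining edges, the sign-compatibility constraints here make the outer loop unnecessary: any cycle in the modified network will yield a valid SCM circuit.

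To enforce condition (1) of \textsc{SCM-Step}, I restrict which edges of $G(\vex)$ may appear in the circuit. Delete every forward edge $e$ (in its original orientation) unless $\vey(e) > \vex(e)$, and delete every reversed edge $e'$ unless $\vey(e) < \vex(e)$ (where $e$ is the corresponding original edge). Any simple dicycle in the resulting subgraph then corresponds to a circuit $\veg$ with $\veg(e) > 0$ only when $\vey(e) - \vex(e) > 0$ and $\veg(e) < 0$ only when $\vey(e) - \vex(e) < 0$, so $\veg$ is sign-compatible with $\vey - \vex$.

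To enforce condition (3), I cap the capacity of every surviving forward edge $e$ at $\vey(e) - \vex(e)$, and the capacity of every surviving reversed edge $e'$ at $\vex(e) - \vey(e)$. For any cycle $\veg$ in the modified network, the maximal step length $\lambda$ from $\vex$ then satisfies $\lambda \leq |\vey(e) - \vex(e)|$ for every edge $e$ in the support of $\veg$, which is precisely the inequality needed for $\veg$ to remain sign-compatible with $\vey - (\vex + \lambda \veg)$. Every surviving edge has positive capacity, so $\lambda > 0$ and condition (2) also holds. A simple cycle can be found via depth-first search in polynomial time; if none exists, no SCM circuit exists.

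The main step to verify is the converse: if an SCM circuit $\veg^{*}$ with step length $\lambda^{*}$ exists, then the underlying cycle survives both the deletions and the capacity caps in the modified network. The deletions are handled by the observation that condition (3) combined with $\lambda^{*} > 0$ forces the sign inequalities $(\vey(e) - \vex(e)) \cdot \veg^{*}(e) > 0$ to be \emph{strict} on every edge in the support of $\veg^{*}$, so no deleted edge appears in $\veg^{*}$. For the capacity caps, condition (3) together with the maximality of $\lambda^{*}$ gives $\lambda^{*} \leq |\vey(e) - \vex(e)|$ on every cycle edge, which matches the cap exactly. The key subtlety is confirming this tight match: the cap must be neither looser (which would admit directions violating condition (3)) nor tighter (which would exclude valid SCM circuits).
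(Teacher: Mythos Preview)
Your proposal has a genuine gap: capping the capacities in the modified residual network does not bound the maximal step length $\lambda$ taken in $P$. Depth-first search finds a cycle using only the edge set, so once you have deleted the sign-incompatible edges, the capacity caps are inert---they remove no further edges (you yourself note every surviving edge has positive capacity). The step length $\lambda$ that matters for condition~(3) is $\min_e \{\text{residual capacity in } G(\vex)\}$ over the cycle edges, and for a forward edge with $\vey(e) > \vex(e)$ this residual is $\veu(e)-\vex(e)$, which can strictly exceed your cap $\vey(e)-\vex(e)$. Concretely, take nodes $a,b$ with edges $e_1=(a,b)$, $e_2=(a,b)$, $e_3=(b,a)$ and capacities $\veu=(10,1,10)$; let $\vex=(0,0,0)$ and $\vey=(9,1,10)$, both vertices. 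Your restricted network retains all three forward edges, and DFS may return the cycle $\{e_1,e_3\}$ with circuit $\veg=(1,0,1)$. Its maximal step in $P$ has length $\min(10,10)=10$, landing at $(10,0,10)$; then $\vey-\step(\vex,\veg)=(-1,1,0)$, which is \emph{not} sign-compatible with $\veg$. So your algorithm can return a non-SCM circuit even though a valid one ($(0,1,1)$, with step length $1$) exists.

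The paper's proof avoids this by \emph{retaining} the outer loop over facet-defining edges $e\in M$: it fixes $\kappa=|\vey(e)-\vex(e)|$, deletes all edges with $|\vey-\vex|<\kappa$ (your cap would need to go to zero here, not merely be reduced), and then searches for a cycle through $e$ itself. Because $e\in M$ has $\vey(e)\in\{0,\veu(e)\}$, its residual capacity in $G(\vex)$ equals $\kappa$ exactly, forcing $\lambda=\kappa$; the deletion in Step~1c then guarantees $\lambda\leq|\vey(e')-\vex(e')|$ on every remaining cycle edge, which is precisely condition~(3). Your claim that ``the sign-compatibility constraints make the outer loop unnecessary'' is where the argument goes wrong---the loop is what pins down the correct step length.
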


\begin{proof}
    We modify \autoref{algo:NF_facet_incident} by replacing Step \textit{1} with the following steps
    {\it
    \begin{enumerate}
        \item[1a.] Set $\kappa = |\vey(e) - \vex(e)|$.
        \item[1b.] For each edge $(v_1, v_2) \in A$, if $\vex((v_1, v_2)) \leq \vey((v_1, v_2))$, delete edge $(v_2, v_1)$ in $G(\vex)$; if $\vex((v_1, v_2)) \geq \vey((v_1, v_2))$, delete edge $(v_1, v_2)$ in $G(\vex)$.
        \item[1c.] For each edge $(v_1, v_2) \in E$, if $|\vey((v_1, v_2)) - \vex((v_1, v_2))| < \kappa$, delete edges $(v_1, v_2)$ and $(v_2, v_1)$ in $G(\vex)$.
    \end{enumerate}}
    Step \textit{1b} ensures that the found cycle can appear as a circuit direction in some sign-compatible circuit decomposition. 
    Step \textit{1c} ensures that a found cycle has step length $\kappa$ and is thus a maximal step.
\end{proof}

Therefore, despite \textsc{Incident-Facet-Step} being NP-complete in general (Subsection \ref{subsec:incident-facet}), and \textsc{SCM-Step} being open in general (Section \ref{sec:conclusion}), both problems are solvable in polynomial time for TU polyhedra. In fact, both problems reduce to cycle-finding for the network-flow case and can be solved efficiently through graph-theoretic algorithms.

\subsection{Polytopes with Sign-Compatible Circuit Walks}\label{sec:alwayssc}

In Section \ref{sec:short_walk_hard}, we proved that it is NP-hard to find a shortest sign-compatible circuit walk. However, we can efficiently construct \emph{some} sign-compatible circuit walk between the vertices of the circulation polytope we considered. We here present a class of polytopes that always have a sign-compatible circuit walk between \emph{any} given pair of vertices: the so-called $(n,d)$-parallelotopes. An $n$-parallelotope is an $n$-dimensional polytope formed from the Minkowski sum of $n$ linearly independent line segments, and thus, it is a zonotope. The class of $(n,d)$-parallelotopes (\autoref{defn:nd-parallelotope}) generalizes the class of parallelotopes. We briefly provide some necessary background on $(n,d)$-parallelotopes; for a detailed treatment, see Section 6 of \cite{bv-17}, where these objects were introduced.

\begin{defn}\label{defn:nd-parallelotope}
    Let $d \leq n \in \N$ and let $P$ be a polytope. We say $P$ is an $(n,d)$-parallelotope if for each pair of vertices $\vev$ and $\vew$, the minimal face containing $\vev$ and $\vew$ is a parallelotope with dimension at most $d$, and dimension $d$ is achieved for a minimal face of some pair of vertices.
\end{defn}

An $(n,n)$-parallelotope is an $n$-parallelotope, an $(n, 1)$-parallelotope is a simplex, and an $(n, 0)$-parallelotope is just a point in $\R^n$. See \autoref{figure:nd-parallelotopes} for a visualization of  $(3, d)$-parallelotopes. 

\begin{figure}
    \centering
    \begin{subfigure}[b]{0.3\textwidth}
        \centering
        \begin{tikzpicture}[scale=1.5, inner sep=.75mm, minicirc/.style={circle,draw=black,fill=black,thick}]

            \node (circ1) at (0,0,0) [minicirc] {};
            \node (circ2) at (1,0,0) [minicirc] {};
            \node (circ3) at (0,1,0) [minicirc] {};
            \node (circ4) at (0,0,1) [minicirc] {};

            \draw[black, thick, dashed] (circ1) -- (circ2);
            \draw[black, thick, dashed] (circ1) -- (circ3);
            \draw[black, thick, dashed] (circ1) -- (circ4);
            \draw[black, thick] (circ2) -- (circ3);
            \draw[black, thick] (circ2) -- (circ4);
            \draw[black, thick] (circ3) -- (circ4);

        \end{tikzpicture}
    \end{subfigure}
    \hfill
    \begin{subfigure}[b]{0.3\textwidth}
        \centering
        \begin{tikzpicture}[scale=1.5, inner sep=.75mm, minicirc/.style={circle,draw=black,fill=black,thick}]
            \node (circ1) at (0,0,0) [minicirc] {};
            \node (circ2) at (2,0,0) [minicirc] {};
            \node (circ3) at (1,1,0) [minicirc] {};
            \node (circ4) at (-.5,0,1) [minicirc] {};
            \node (circ5) at (1.5,0,1) [minicirc] {};
            \node (circ6) at (0.5,1,1) [minicirc] {};

            \draw[black, thick, dashed] (circ1) -- (circ2);
            \draw[black, thick, dashed] (circ1) -- (circ3);
            \draw[black, thick] (circ2) -- (circ3);
            \draw[black, thick] (circ4) -- (circ5);
            \draw[black, thick] (circ4) -- (circ6);
            \draw[black, thick] (circ5) -- (circ6);
            \draw[black, thick, dashed] (circ1) -- (circ4);
            \draw[black, thick] (circ2) -- (circ5);
            \draw[black, thick] (circ3) -- (circ6);
        \end{tikzpicture}
    \end{subfigure}
    \hfill
    \begin{subfigure}[b]{0.3\textwidth}
        \centering
        \begin{tikzpicture}[scale=1.5, inner sep=.75mm, minicirc/.style={circle,draw=black,fill=black,thick}]
            \node (circ1) at (0,0,0) [minicirc] {};
            \node (circ2) at (1,0,0) [minicirc] {};
            \node (circ3) at (0,1,0) [minicirc] {};
            \node (circ4) at (0,0,1) [minicirc] {};
            \node (circ5) at (1,1,0) [minicirc] {};
            \node (circ6) at (0,1,1) [minicirc] {};
            \node (circ7) at (1,0,1) [minicirc] {};
            \node (circ8) at (1,1,1) [minicirc] {};

            \draw[black, thick, dashed] (circ1) -- (circ2);
            \draw[black, thick, dashed] (circ1) -- (circ3);
            \draw[black, thick, dashed] (circ1) -- (circ4);
            \draw[black, thick] (circ2) -- (circ5);
            \draw[black, thick] (circ2) -- (circ7);
            \draw[black, thick] (circ3) -- (circ5);
            \draw[black, thick] (circ3) -- (circ6);
            \draw[black, thick] (circ4) -- (circ6);
            \draw[black, thick] (circ4) -- (circ7);
            \draw[black, thick] (circ5) -- (circ8);
            \draw[black, thick] (circ6) -- (circ8);
            \draw[black, thick] (circ7) -- (circ8);
            
        \end{tikzpicture}
    \end{subfigure}
    
    \caption{Examples of $(3,d)$-parallelotopes. From left to right: $d=1$, $d=2$, $d=3$.}
    \label{figure:nd-parallelotopes}
\end{figure}
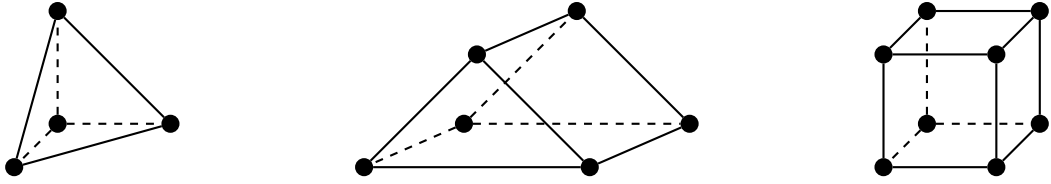

Every maximal circuit walk on an $(n,d)$-parallelotope is an edge walk \cite{bv-17}. A proof of this fact uses the inner cone: let $P$ be a pointed polyhedron and $\vev \in P$ be a vertex, the \emph{inner cone} of $\vev$, $I(\vev)$, is the cone whose extreme rays are the edge directions of edges incident to $\vev$. Given another vertex $\vew \in P$, we define the set $I^{\vev \vew}(\vev)$ to be the inner cone of $\vev$ restricted to the minimal face containing both $\vev$ and $\vew$. The class  of $(n,d)$-parallelotopes is characterized in terms of their inner cones via the so-called {\em symmetric inner cone condition}.

For the remainder, we consider \emph{simple polytopes}. Because vertices of simple polytopes in dimension $d$ have exactly $d$ incident facets, we are able to provide a characterization of each inner cone. \\

\begin{prop}[\cite{bv-17}, Symmetric Inner Cone Condition]\label{prop:inner_cone}
    Let $P$ be a simple polytope. The following statements are equivalent. 
    \begin{enumerate}
        \item $P$ is an $(n,d)$-parallelotope.
        \item All circuit walks in $P$ are edge walks.
        \item $I^{\vev \vew}(\vev) = -I^{\vev \vew}(\vew)$ for all pairs of vertices $\vev, \vew \in P$. 
    \end{enumerate}
\end{prop}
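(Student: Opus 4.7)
The plan is to establish the three-way equivalence via the cyclic chain $(1)\Rightarrow(2)\Rightarrow(3)\Rightarrow(1)$.

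For $(1)\Rightarrow(2)$: Let $P$ be an $(n,d)$-parallelotope and consider a circuit walk $((\veg_1,\lambda_1),\ldots,(\veg_r,\lambda_r))$ starting at a vertex $\vev_0$. I would proceed inductively step by step. The minimal face containing the first step $\vev_0 \to \vev_0 + \lambda_1\veg_1$ is itself a face of $P$, hence a parallelotope of dimension $\leq d$. On a parallelotope, the circuits coincide with the edge directions: the edge directions form a basis for the linear hull, so any support-minimal dependence relation must itself be (a multiple of) an edge direction. Maximality of the step length then forces the endpoint to be the opposite vertex along that edge. Applying the argument inductively along the walk exhibits the whole walk as an edge walk.

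For $(2)\Rightarrow(3)$: Fix vertices $\vev,\vew$, let $F$ be the minimal face containing them, and set $d=\dim F$. By the conformal sum property of circuits, $\vew-\vev$ admits a sign-compatible decomposition into circuits, and reordering such a decomposition yields a sign-compatible circuit walk from $\vev$ to $\vew$ (within $F$). By hypothesis this walk is an edge walk, so its first step is along an edge of $F$ incident to $\vev$, which is a generator of $I^{\vev\vew}(\vev)$. Reversing roles and considering walks from $\vew$ to $\vev$ gives generators of $I^{\vev\vew}(\vew)$. By simplicity of $P$, each of $\vev$ and $\vew$ has exactly $d$ incident edges in $F$. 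Pairing each edge $\veg$ at $\vev$ with the ``opposite'' edge at $\vew$ obtained by reversing the walk — and using that sign-compatibility is preserved under negation of the target displacement — yields $I^{\vev\vew}(\vev) = -I^{\vev\vew}(\vew)$.

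For $(3)\Rightarrow(1)$: I would argue by induction on $d = \dim F$ where $F$ is a minimal face containing two vertices. The base case $d=1$ (an edge) is immediate. For the inductive step, take opposite vertices $\vev, \vew$ of $F$, let $\veg_1,\ldots,\veg_d$ be the edge directions at $\vev$ in $F$ (exactly $d$ of them by simplicity), and by (3) the edge directions at $\vew$ in $F$ are $-\veg_1,\ldots,-\veg_d$. Each facet of $F$ incident to $\vev$ is obtained by omitting one $\veg_i$ and is a minimal face between $\vev$ and an appropriate opposite vertex; by the inductive hypothesis it is a parallelotope generated by the remaining $\veg_j$'s. The matching structure at $\vew$ forces the opposite facets to coincide up to translation along $\veg_i$, and assembling them recovers $F$ as the Minkowski sum of $d$ line segments in the directions $\veg_1,\ldots,\veg_d$. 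Ranging over all pairs $(\vev, \vew)$ and taking $d^*$ to be the maximum of $\dim F$ proves that $P$ is an $(n,d^*)$-parallelotope.

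The main obstacle is $(3)\Rightarrow(1)$: turning the symmetric inner-cone condition into a genuine parallelotope structure requires careful combinatorial bookkeeping on the face lattice of $F$. The simplicity hypothesis is crucial for forcing exactly $d$ edges at each of $\vev$ and $\vew$, but the real technical heart is verifying that the inductively-obtained parallelotope facets glue together consistently — in particular, that the side lengths along each matched pair of edge directions agree at opposite vertices, and that no additional vertices of $F$ appear outside the Minkowski-sum structure.
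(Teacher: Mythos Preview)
The paper does not supply a proof of this proposition; it is quoted as a known result from \cite{bv-17} and used as a black box. So there is no ``paper's own proof'' to compare against, and your task would really be to reconstruct the argument of \cite{bv-17}.

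That said, your sketch has a genuine gap in $(2)\Rightarrow(3)$. You write that a conformal sum of $\vew-\vev$ into circuits, after reordering, ``yields a sign-compatible circuit walk from $\vev$ to $\vew$.'' This is precisely what the present paper warns against: a sign-compatible circuit \emph{decomposition} need not have maximal step lengths, and thus is not in general a circuit \emph{walk} in the sense of \autoref{defn:circuit_walk} (see the paragraph following that definition). Hypothesis (2) speaks only about circuit walks, so you cannot apply it to the decomposition you constructed. You would first need an independent argument that some sign-compatible decomposition actually is maximal at each step---which is essentially the content of the implication you are trying to prove, so this is circular.

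There is a smaller circularity in $(1)\Rightarrow(2)$ as well: you invoke ``the minimal face containing the first step $\vev_0 \to \vev_0+\lambda_1\veg_1$'' and appeal to the parallelotope hypothesis, but the definition of $(n,d)$-parallelotope concerns minimal faces containing \emph{pairs of vertices}. Until you know the endpoint $\vev_0+\lambda_1\veg_1$ is a vertex (which is what you are trying to show), you cannot invoke that hypothesis. The cleaner route---used in \cite{bv-17} and echoed in the paragraph after \autoref{prop:inner_cone} in the present paper---is to first establish that for an $(n,d)$-parallelotope the inner cones $I(\vev)$ coincide with the elementary cones of the arrangement $\mathcal{H}(B)$; then any feasible circuit at $\vev$ lies in $I(\vev)$ and, being an extreme ray of an elementary cone, must be an edge direction.
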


Inner cones of $(n,d)$-parallelotopes are intimately connected with sign-compatible circuit walks on $(n,d)$-parallelotopes. First, we recall the notion of an elementary cone. Let $B \in \R^{m\times n}$ and $P = \{ \vex \in \R^n : B\vex \leq \ved \}$. We define the following hyperplane arrangement $\mathcal{H}(B) := \bigcup_{i=1}^m \{ \vex : B^\T_i\vex = 0 \}$. The set $\mathcal{H}(B)$ partitions $\R^n$ into $n$-dimensional cones with disjoint interiors. The inclusion-minimal $n$-dimensional cones are called \emph{elementary cones}. Suppose that $\vev$ and $\vew$ are vertices of $P$. Let $\mathcal{E}$ be the (intersection of) elementary cone(s) containing $\vew - \vev$. Every sign-compatible circuit walk from $\vev$ to $\vew$ is contained in $(\vev + \mathcal{E}) \cap (\vew - \mathcal{E})$. The elementary cones of an $(n,d)$-parallelotope $P$ are highly structured: the inner cones $I(\vev)$ of $P$ are precisely the elementary cones of $P$ \cite{bv-17}. Therefore, if $P$ is an $(n,d)$-parallelotope, we have $(\vev + \mathcal{E}) \cap (\vew - \mathcal{E}) = (\vev + I^{\vev \vew}(\vev)) \cap (\vew - I^{\vev \vew}(\vev))$.

First, we show that if $P$ is an $(n,d)$-parallelotope, then there exists a sign-compatible circuit walk between every pair of vertices of $P$. Then, we show that, under some additional restrictions, $(n,d)$-parallelotopes are the unique colletion of polytopes satisfying those restrictions such that there exists a sign-compatible circuit walk between every pair of vertices.

\begin{lem}
    Let $P$ be a pointed polyhedron and $\vev$, $\vew \in P$ be vertices. If $P$ is an $(n,d)$-parallelotope, then there exists a sign-compatible circuit walk from $\vev$ to $\vew$.
\end{lem}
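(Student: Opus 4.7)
The plan is to traverse the edges of the minimal parallelotope face $F$ containing $\vev$ and $\vew$, and to verify sign-compatibility via the elementary-cone characterization of $(n,d)$-parallelotopes mentioned just before this lemma.

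First I would let $F$ be the minimal face of $P$ containing both $\vev$ and $\vew$. By Definition~\ref{defn:nd-parallelotope}, $F$ is a parallelotope of some dimension $d' \leq d$. By minimality, $\vev$ and $\vew$ must lie at opposite corners of $F$; otherwise a proper sub-parallelotope of $F$ generated by a subset of its generators would already contain both. Letting $\veu_1, \ldots, \veu_{d'}$ be the generators of $F$, we may write $\vew - \vev = \sum_{i=1}^{d'} \eta_i \veu_i$ for signs $\eta_i \in \{+1, -1\}$. For each $i$, let $\veg_i$ be the primitive integer representative of the edge direction $\eta_i \veu_i$ at $\vev$ going into $F$, and let $\lambda_i > 0$ be the corresponding step length along that edge.

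I would then propose $W = ((\veg_1, \lambda_1), \ldots, (\veg_{d'}, \lambda_{d'}))$, in any fixed order, as the sought walk. Since faces of faces are faces, each edge of $F$ is an edge of $P$ and hence a circuit of $P$; so $W$ traces an edge path through vertices of $F$ from $\vev$ to $\vew$. In the simple polytope setting of Proposition~\ref{prop:inner_cone}, a maximal step along an edge direction at a vertex lands at the adjacent vertex of $P$ on that edge, which coincides with the next vertex of $F$ in the walk; hence each $\lambda_i$ is maximal and $W$ is a valid maximal circuit walk.

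For sign-compatibility, each $\veg_i$ lies in the inner cone $I(\vev)$ as an edge direction at $\vev$, and so does $\vew - \vev = \sum_i \lambda_i \veg_i$ as a positive combination. By Proposition~\ref{prop:inner_cone} together with the discussion immediately preceding this lemma, the inner cones of an $(n,d)$-parallelotope coincide with the elementary cones induced by the hyperplane arrangement $\mathcal{H}(B)$. Hence $I(\vev)$ is itself an elementary cone, so every pair of vectors in $I(\vev)$ lies on the same side of each hyperplane $\{\vex : B_k^\T \vex = 0\}$; reading this coordinatewise gives $(B\veg_i)(k)\cdot(B\veg_j)(k) \geq 0$ and $(B\veg_i)(k)\cdot(B(\vew - \vev))(k) \geq 0$ for every row $k$ of $B$, which is exactly pairwise sign-compatibility of $\veg_1, \ldots, \veg_{d'}$ with respect to $B$, making $W$ a sign-compatible circuit walk. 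The key obstacle is this last step: a direct per-facet argument would require separate cases for whether a facet $F_k$ contains $F$, meets $F$ in a $(d'-1)$-face, or is disjoint from $F$, but the elementary-cone identification dispatches all three cases uniformly.
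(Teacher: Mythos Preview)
Your proposal is correct and follows essentially the same approach as the paper: both restrict to the minimal parallelotope face $F$, walk along its edges, and invoke the identification of inner cones with elementary cones (cited from \cite{bv-17}) to obtain sign-compatibility. The paper phrases maximality via the spindle identity $F = (\vev+\mathcal E)\cap(\vew-\mathcal E)$ while you verify it directly from the edge-walk structure, but the substance is the same; your reference to simplicity for maximality is unnecessary, since a maximal step along an edge direction at a vertex reaches the adjacent vertex in any bounded polytope.
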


\begin{proof}
    Let $\vev$, $\vew \in P$ be vertices. Let $F$ be the minimal face containing $\vev$ and $\vew$ and note that $F$ is a $\operatorname{dim}(F)$-parallelotope. We regard $F$ as a polyhedron with vertices $\vev$ and $\vew$. Note that every circuit of $F$ is a circuit of $P$ and every circuit walk in $F$ is a circuit walk in $P$. Thus, if we show that there exists a sign-compatible circuit walk from $\vev$ to $\vew$ in $F$, then we have proved the claim. 

    Let $\mathcal{E}$ be the elementary cone (of $F$) containing $\vew-\vev$. There exists a sign-compatible circuit decomposition of $\vew - \vev$ that is an edge walk on the spindle $(\vev + \mathcal{E}) \cap (\vew - \mathcal{E})$. From \cite{bv-17}, $\mathcal{E} = I(\vev) = -I(\vew)$. Because $F$ is a parallelotope, $F = (\vev + I(\vev)) \cap (\vew + I(\vew))$. Thus, the sign-compatible circuit decomposition is an edge walk in $F$, and therefore, it is a maximal circuit walk in $F$. 
\end{proof}

Recall that a sign-compatible circuit walk between a pair of vertices necessarily has length at most the dimension of the minimal face containing the pair of vertices. We show that under some assumptions on the length of each sign-compatible walk, all polyhedra that have sign-compatible circuit walks between all pairs of vertices are $(n, \, d)$-parallelotopes.

\begin{thm}\label{thm:some_scm}
    Let $P$ be a simple, pointed polyhedron. 
    If for all pairs of vertices $\vev$, $\vew \in P$, each sign-compatible circuit decomposition of $\vew-\vev$ gives a sign-compatible circuit walk of length $d$, where $d$ is the dimension of the minimal face containing $\vev$ and $\vew$,
    then $P$ is an $(n,d^*)$-parallelotope, where $d^*$ is the maximum of $d$ over all choices of $\vev$ and $\vew$. 
\end{thm}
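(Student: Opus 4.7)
The plan is to verify the symmetric inner cone condition of \autoref{prop:inner_cone} for every pair of vertices $\vev, \vew \in P$, i.e., that $I^{\vev\vew}(\vev) = -I^{\vev\vew}(\vew)$. Once established, \autoref{prop:inner_cone} implies the minimal face $F$ containing $\vev$ and $\vew$ (which is simple of dimension $d$, since faces of simple polyhedra are simple) is a $d$-parallelotope, and taking $d^*$ to be the maximum such $d$ over all pairs shows that $P$ is an $(n, d^*)$-parallelotope.

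Fix vertices $\vev, \vew$ with minimal face $F$ of dimension $d$, and let $(\veg_1, \lambda_1), \ldots, (\veg_d, \lambda_d)$ be a sign-compatible decomposition of $\vew - \vev$ of length $d$ as guaranteed by the hypothesis. First, I would show each $\veg_i$ lies in the affine span of $F$ and in the inner cone $I^{\vev\vew}(\vev)$: since $B(\vew - \vev)$ vanishes on every facet of $P$ containing $F$ and is strictly negative on the facets of $F$ at $\vev$, pairwise sign-compatibility, combined with the observation that a sum of same-signed quantities equaling zero forces each summand to vanish, yields $B_j \veg_i = 0$ on the former and $B_j \veg_i \le 0$ on the latter. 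This places each $\veg_i$ in the simplicial $d$-cone $I^{\vev\vew}(\vev)$, whose $d$ extreme rays are the edge directions of $F$ at $\vev$. Next I would prove the $\veg_i$ are linearly independent via a perturbation argument: if $\sum c_i \veg_i = 0$ is a nontrivial relation --- and at least one $c_i > 0$ must hold, since the $\veg_i$ all lie in a closed $B$-orthant and sum to a nonzero vector --- then $\sum (\lambda_i - \epsilon c_i) \veg_i = \vew - \vev$ remains valid for all $\epsilon$, so at the smallest $\epsilon^* > 0$ where some coefficient vanishes we obtain a sign-compatible decomposition of length strictly less than $d$, contradicting the hypothesis.

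With $d$ linearly independent circuits in the simplicial $d$-cone $I^{\vev\vew}(\vev)$, I would then argue that each $\veg_i$ lies on an extreme ray --- and so is a positive multiple of an edge direction of $F$ at $\vev$. Otherwise, $\veg_i$ would lie in the relative interior of $I^{\vev\vew}(\vev)$, corresponding to a hyperplane from $\mathcal{H}(B)$ properly splitting the inner cone, and choosing a vertex $\vew'$ for which $\vew' - \vev$ aligns with such a splitting hyperplane would produce a sign-compatible decomposition of length strictly less than the dimension of the minimal face of $\vev, \vew'$, contradicting the hypothesis applied to that pair. Applying the same analysis symmetrically to $(\vew, \vev)$ gives that each $-\veg_i$ is an edge direction of $F$ at $\vew$, whence $I^{\vev\vew}(\vev) = -I^{\vev\vew}(\vew)$. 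The main obstacle is precisely this extreme-ray step: simple polytopes can harbor non-edge circuits in their inner cones (in a pentagon, for instance, a ``cross'' circuit parallel to a non-incident facet yields a length-$1$ sign-compatible decomposition between non-adjacent vertices), so the argument must invoke the hypothesis across \emph{all} vertex pairs --- not only $\vev, \vew$ --- to rule out such configurations globally.
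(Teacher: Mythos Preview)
Your Steps 2 and 3 are fine, but Step 4 --- which you correctly flag as the crux --- is a genuine gap. You assert that a non-extreme-ray circuit $\veg_i$ in $I^{\vev\vew}(\vev)$ yields a hyperplane of $\mathcal{H}(B)$ splitting the cone, and that one can then pick a vertex $\vew'$ with $\vew'-\vev$ on that hyperplane so as to force a too-short decomposition. Neither claim is justified: the hyperplanes witnessing that $\veg_i$ is a circuit need not intersect the interior of the inner cone in the way you suggest, and there is no reason a vertex $\vew'$ with the required alignment should exist, let alone that the minimal face of $\vev,\vew'$ would have the right dimension to produce a contradiction. Your closing remark that the argument ``must invoke the hypothesis across \emph{all} vertex pairs'' is actually a symptom of the problem rather than a plan to fix it.

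The paper's proof avoids this entirely by exploiting the part of the hypothesis you never use: not just that every sign-compatible decomposition has length $d$, but that every such decomposition is a \emph{maximal walk}. Maximality of the first step forces $\vev+\lambda_1\veg_1$ into some facet of $F$, and sign-compatibility together with feasibility of all partial sums forces that facet to be one of the $G_j$ incident to $\vew$, with $\veg_2,\ldots,\veg_d$ parallel to it. Now reorder: since \emph{every} permutation of the decomposition is again, by hypothesis, a maximal walk, placing any $\veg_i$ first shows it enters some $G_{j(i)}$ with all other circuits parallel to $G_{j(i)}$. Hence each $\veg_i$ is parallel to exactly $d-1$ of $G_1,\ldots,G_d$. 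Applying the hypothesis to the \emph{reversed} decomposition (a sign-compatible decomposition of $\vev-\vew$, hence by hypothesis a walk from $\vew$ to $\vev$) gives the same conclusion for the facets $F_1,\ldots,F_d$ at $\vev$. Being parallel to $d-1$ facets of the simple face $F$ at $\vev$ makes each $\veg_i$ an edge direction there, and you are done --- using only the pair $(\vev,\vew)$, never a global argument over other vertex pairs.
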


\begin{proof}
    Let $F$ be the minimal face containing $\vev$ and $\vew$. We will show that $F$ is a parallelotope. Let $((\veg_1, \lambda_1), \ldots, (\veg_d, \lambda_d))$ denote a sign-compatible circuit walk from $\vev$ to $\vew$ of length $d$. We assume (without loss of generality) that each $\veg_i$ is scaled to be maximal, that is, $\lambda_i = 1$. 

    As $P$ is simple, $F$ is simple. Because $((\veg_1, \lambda_1), \ldots, (\veg_d, \lambda_d))$ is a sign-compatible circuit walk from $\vev$ to $\vew$, it follows that $\vev + \sum_{i=1}^k \lambda_i \veg_i \in F$ for each $1 \leq k \leq d$. This implies that $((\veg_1, \lambda_1), \ldots, (\veg_d, \lambda_d))$ is a sign-compatible circuit walk from $\vev$ to $\vew$ in $F$. Let $F_1, \ldots, F_{k_1}$ be the facets of $F$ incident to $\vev$ and $G_1, \ldots, G_{k_2}$ be the facets of $F$ incident to $\vew$. By simplicity of $F$, $k_1 = k_2 = d$.  By minimality of $F$, $F_i \neq G_j$ for any pair $i,j$. 
    
    We claim that each $\veg_i$ is parallel to exactly $d-1$ facets from among the collection $\{ F_1, \ldots, F_d \}$ and parallel to exactly $d-1$ facets from among the collection $\{ G_1, \ldots, G_d \}$. To see this, note that by maximality, $\vev + \veg^1$ is contained in some facet of $F$ that is not incident to $\vev$ and by sign-compatibility, this facet is $G_i$ for some $1\leq i \leq d$. Without loss of generality, we assume $G_i = G_1$. By sign-compatibility, each $\veg_2, \ldots, \veg_d$ is parallel to $G_1$. Because rearranging the steps of a sign-compatible circuit decomposition still yields a sign-compatible circuit decomposition and because each rearrangement is a sign-compatible circuit walk, we have that each $\veg_1, \ldots, \veg_{i-1}, \veg_{i+1}, \ldots, \veg_d$ is parallel to $G_i$.  
    
     The reversed decomposition $((-\veg_1, 1), \ldots, (-\veg_d, 1))$ is a sign-compatible circuit decomposition of $\vev - \vew$. By assumption, it is a sign-compatible circuit walk from $\vew$ to $\vev$. By similar arguments, for each $i$, the circuits $-\veg_1, \ldots, -\veg_{i-1}, -\veg_{i+1}, \ldots, -\veg_d$ (and thus, $\veg_1, \ldots, \veg_{i-1}, \veg_{i+1}, \ldots, \veg_d$) are parallel to $F_i$. Thus, each $\veg_i$ is parallel to exactly $d-1$ facets of $F_1, \ldots, F_d$ and parallel to exactly $d-1$ facets of $G_1, \ldots, G_d$.

    We claim that each $\veg_i$ is an edge direction of an edge incident to $\vev$ in $F$. For each $i$, $\vev + \veg_i \in G_i \cup \bigcap_{j \neq i} F_j$. By simplicity, each vertex is the intersection of $d$ facets and two vertices are adjacent if they are contained in the intersection of $d-1$ facets. It follows that $\vev + \veg_i$ is a vertex adjacent to $\vev$ because $\veg_i$ is parallel to $d-1$ facets and enters a new facet. Therefore, $\veg_i$ is an edge direction of an edge incident to $\vev$.
    Because there are $d$ edges incident to $\vev$ and each $\veg_i$ is distinct, it follows that each edge direction of an edge incident to $\vev$ corresponds to some $\veg_i$. Let $\mathcal{E}$ be the elementary cone containing $\vew - \vev$. Clearly, $I(\vev) \subseteq \vev+\mathcal{E}$. However, because $\mathcal{E}$ is an elementary cone, we have $\vev + \mathcal{E} \subseteq I(\vev)$, and therefore, $\vev + \mathcal{E} = I(\vev)$. By similar arguments, $\vew - \mathcal{E} = I(\vew)$. Therefore, $F$ is a parallelotope and $P$ is an $(n,d^*)$-parallelotope.
\end{proof}

\autoref{thm:some_scm} shows that, under some assumptions on the length of each sign-compatible walk, the class of polytopes that admit a sign-compatible circuit walk between every pair of vertices are $(n,d)$-parallelotopes. In the conclusion, we discuss a few directions towards classifying all polyhedra that admit a sign-compatible circuit walk between every pair of vertices.  

\section{Conclusion}\label{sec:conclusion}

We resolved some fundamental questions about the complexity of short circuit walks in polyhedra, but many interesting questions remain open. First, we demonstrated that it is NP-hard to determine the circuit distance between a pair of vertices. This hardness holds even for circulation polytopes with unit capacities, which are special $0/1$-TU polytopes. However, it remains open whether it is NP-hard to determine the circuit {\em diameter}. For combinatorial diameters, weak and strong NP-hardness is known \cite{ft-94,s-18}, even for the highly-structured fractional matching polytope \cite{s-18}. Our proof crucially relies on knowledge of a starting and a target vertex, and does not readily provide an approach to this question: it could be possible that determining the circuit diameter is efficiently possible even though determining the distance between a pair of given vertices is not. We believe it is hard. 

We transferred the proof to also observe hardness of determining the distance under the restriction of sign-compatibility of the circuit steps, with or without a requirement of maximal step lengths. In particular, this implies that for a $d$-dimensional polytope $P$ and a pair of vertices $\vev, \vew \in P$, it is NP-hard to determine if $\vew-\vev$ can be decomposed into a conformal sum using two circuits. In contrast, it is well-known that $\vew-\vev$ can always be decomposed into a conformal sum of at most $d$ circuits \cite{r-69}. This leaves a range of $3 \leq k \leq d-1$, for which it remains open whether it is hard to decide whether we can decompose $\vew-\vev$ into a conformal sum of $k$ circuits. Another consequence is that it is NP-hard to determine the minimum total length needed for a circuit walk between a pair of vertices for the $p$-norms with $p>1$. We are interested in identifying settings where circuit walks of minimal total length, or provable approximations, can be constructed efficiently. 

The above results and questions relate to a shortest circuit walk in the sense of few steps or small geometric length. Our work in Sections \ref{sec:single_step_hard} and \ref{sec:TU_easy} relates to the desirable property of `picking up correct facets,' i.e., finding a step that is contained in a facet incident to the target.  A walk which achieves this at each step has length at most $d$. We proved that the decision whether a first step can satisfy this property is already NP-hard. However, our proof does not resolve the equally interesting question about the existence of a sign-compatible maximal step, which would necessarily  pick up a correct facet and \textit{also} exhibit additional favorable features that give rise to short circuit walks. Our proof does not resolve this because we check only for containment in a facet, but not sign-compatibility overall. On the other hand, we do prove that in the special case of TU polyhedra this decision is efficient and we are interested in the identification of additional settings where the same holds. Specifically, we would like to understand whether the efficiency breaks down as soon as the components of the circuits have more than $3$ possible values.

A natural generalization is to not only consider the first step of a walk, but a complete walk which enters a correct facet in each step. This in turn leads to a desire to understand which classes of polytopes are guaranteed to have sign-compatible walks. This question can be interpreted in several ways. In Section \ref{sec:alwayssc}, we resolve a particularly strong interpretation for simple polytopes: if, for all pairs of vertices, all sign-compatible circuit decompositions give sign-compatible circuit walks with a number of steps equal to the dimension of the shared face, then the polytope must be some $(n,d)$-parallelotope. We are interested in a generalization to degenerate polytopes. Further, this only gives a partial classification of simple polytopes with sign-compatible circuit walks, and we leave open two other cases.

The first case is when only some sign-compatible circuit decompositions are circuit walks. After reordering the steps, the sign-compatible circuit decomposition may not be a circuit walk. The second case is when there exist ``shortcuts:'' a sign-compatible circuit decomposition may have fewer than $d$ circuits, and in such a case individual steps enter several facets. Weaker interpretations of the original question, like the validity of the property for only a subset or even just one pair of vertices, lead to further interesting topics on the geometric characterization of the classes of corresponding polytopes. 

A particularly good starting point for research in this direction could be the consideration of polytopes of circuit diameter $1$, which clearly always admit a sign-compatible circuit walk between any pair of vertices: if $v$ and $w$ are two vertices then $w-v$ is a circuit and is necessarily a sign-compatible, maximal step. Several well-known classes of polytopes from combinatorial optimization are known to have circuit diameter $1$, such as the traveling salesman polytope on the complete graph $K_n$ ($n \neq 5$) and the perfect matching polytope on $K_n$ ($n \neq 8$) \cite{kps-17}. 

A classification of \textit{all} polytopes with circuit diameter $1$ is a challenging and interesting problem in its own right. The corresponding task of a characterization of all polytopes with combinatorial diameter 1, also known as $2$-\textit{neighborly} polytopes, is a well-established field of study. Even for low dimensions and when restricting to $0/1$-polytopes, a study of their combinatorial properties is extremely challenging: in dimension $7$, there are over $13$ billion equivalence classes \cite{m-19}. In the circuit setting, additional challenges arise already in dimension $2$. While combinatorial diameters only depend on the topology of the skeleton, circuit diameters depend on its realization. For example, while no quadrilateral has circuit diameter $1$, a regular pentagon does have circuit diameter $1$ and perturbing even just one edge causes the diameter to increase to $2$. 

\section*{Acknowledgments}

S. Borgwardt, J. Lee and W. Grewe were partially supported by (US) Air Force Office of Scientific Research grant FA9550-21-1-0233 (Complex Networks). S. Borgwardt was partially supported by NSF grant 2006183 (Algorithmic Foundations, Division of Computing and Communication Foundations). S. Kafer was partially supported by the NSF under Grant No. DMS-1929284 while the author was in residence at the Institute for Computational and Experimental Research in Mathematics in Providence, RI, during the Discrete Optimization: Mathematics,  Algorithms, and Computation semester program. L. Sanit\`a was partially supported by the NWO VIDI grant VI.Vidi.193.087.

\bibliographystyle{plain}

\end{document}